\theoremstyle{theorem}
\newtheorem{lemm}{Lemme}[subsection]
\newtheorem{prop}[lemm]{Proposition}
\newtheorem{coro}[lemm]{Corollaire}
\newtheorem{theo}{Th\'eor\`eme}
\theoremstyle{remark}
\newtheorem{defi}[lemm]{D\'efinition}
\newtheorem{conv}[lemm]{Convention}
\newtheorem{rema}[lemm]{Remarque}
\newcommand{\Z}[1]{\mathbb{Z}/#1\mathbb{Z}}
\newcommand{\PGL}{\mathrm{PGL}}
\newcommand{\im}{{\bf i}}
\newcommand{\C}{\mathbb{C}}
\newcommand{\Bir}{\mathrm{Bir}}
\newcommand{\Aut}{\mathrm{Aut}}
\newcommand{\GL}{\mathrm{GL}}
\newcommand{\rkPic}[1]{\mathrm{rk\ Pic}(#1)}
\newcommand{\Jon}{\mathbf{dJo}}
\newcommand{\Pn}{\mathbb{P}^2} 
\newcommand{\Sym}{\mathrm{Sym}}
\newcommand{\Pic}[1]{\mathrm{Pic}(#1)}
\newcommand{\drawat}[3]{\makebox[0pt][l]{\raisebox{#2}{\hspace*{#1}#3}}}
\begin{document}
\title{Sous-groupes alg\'ebriques du groupe de Cremona}
\authors{J\'er\'emy Blanc
\thanks{Soutenu par le fonds national suisse de la recherche scientifique (FNRS)}
\address{
Universit\'e de Grenoble I,\\
Institut Fourier, BP 74,\\ 
38402 Saint-Martin d'H\`eres,\\ France}
}
\maketitle
\begin{center}\begin{tabular}{p{10cm}}
\footnotesize {\sc Abstract.}~We give a complete classification of maximal algebraic subgroups of the Cremona group  $\mathrm{Bir}(\mathbb{P}^2)$ and provide algebraic varieties that parametrize the conjugacy classes.\\ \\
\footnotesize {\sc R\'esum\'e.}~Nous donnons une classification compl\`ete des sous-groupes alg\'ebriques maximaux du groupe de Cremona $\mathrm{Bir}(\mathbb{P}^2)$ et explicitons les vari\'et\'es qui param\`etrent les classes de conjugaison.\end{tabular}\end{center}
\section*{Translated introduction}
If $X$ is an algebraic complex variety, we denote by $\Bir(X)$ its group of birational transformations (birational maps from $X$ to itself). We say that a subgroup $G\subset \Bir(X)$ is \emph{algebraic} if there exists a structure of algebraic group on $G$ such that the action $t:G\times X\dasharrow X$ induced by the inclusion of $G$ into $\Bir(X)$ is a rational map. The reader can verify that $t$ satisfies the classical axioms of rational actions of algebraic groups (in particular that $t$ is defined in $e\times X$); the fact that a finite subgroup of $\Bir(X)$ is algebraic may also be verified.

The group $\Bir(\mathbb{P}^1)$ is a well-known algebraic group, isomorphic to $\PGL(2,\C)$. If $n\geq 2$, the group $\Bir(\mathbb{P}^n)$ is not algebraic; this complicates the understanding of the structure of the group.

The algebraic subgroups of $\Bir(\mathbb{P}^n)$ have interested many mathematicians for a long time. For $n=2$, we cite the complete classification of maximal connected algebraic subgroups of $\Bir(\Pn)$ established by F. Enriques \cite{bib:Enr}, where the author proves that every maximal connected algebraic subgroup of $\Bir(\Pn)$ is the conjugate of the identity component of the  automorphism group of a minimal rational surface. A modern proof may be found in \cite{bib:U3}. For $n=3$, the classification of maximal connected algebraic subgroups is due to  F. Enriques and G. Fano; a modern treatment may be found in  \cite{bib:U1}, \cite{bib:U2}, \cite{bib:U3}, and \cite{bib:U4} (the results are also explained in \cite{bib:Oda}). We also cite the study of maximal connected algebraic subgroups of $\Bir(\mathbb{P}^n)$ that contain a torus of dimension $n$ made by M. Demazure \cite{bib:Demazure}.

For non-connected subgroups, only a few of results are known, even in dimension~$2$. A lot of results exist in the case of finite subgroups (see \cite{bib:SK}, \cite{bib:Wim}, \cite{bib:BaB}, \cite{bib:deF}, \cite{bib:BeB}, \cite{bib:BeaPel}, the survey \cite{bib:IskReview} and its references, and more recently \cite{bib:DoI}, \cite{bib:JBCR} \cite{bib:JBSMF}, \cite{bib:JBLin}), but these results do not show which finite groups are maximal algebraic subgroups. There are also some remaining open question in the classification of finite subgroups (see the section "What is left?" of \cite{bib:DoI}), like the parametrization of conjugacy classes and the precise description of automorphism groups of conic bundles.

This article provides a study, with recent tools, of algebraic subgroups of $\Bir(\Pn)$. We show that any algebraic subgroup of $\Bir(\Pn)$ is contained in a maximal algebraic subgroup and give a complete classification of the maximal algebraic subgroups, and a parametrization of the conjugacy classes by algebraic varieties. Some specific examples are precisely described.

Our approach uses the modern viewpoint of Mori's theory and Sarkisov's program, aiming a generalisation in higher dimension (although we use in fact some tools specific to dimension~$2$).

Let us give an outline of the article. In section~\ref{Sec:Res} we remind some definitions and state the main result  (Theorems~\ref{Theo:Class} and \ref{Thm:Param}), which is the classification of maximal algebraic subgroup of the Cremona group, each group being described as a $G$-Mori fibration. We present these groups in Sections~\ref{Sec:AutdP} and \ref{Sec:GaFibr}. We prove then (Section~\ref{Sec:Contenu}) that any algebraic subgroup is contained in one of the groups of the classification, and then that any group of the classification is a minimal $G$-fibration (Section~\ref{Sec:MinP}), that is furthermore birationally supperrigid (Section~\ref{Sec:Superrigidite}). The proof of the Theorems (Section~\ref{Sec:DemoThms}) follows directly from these three observations. We finish the article (Section~\ref{Sec:Z2Max}) with a precise description of a family of maximal algebraic subgroups (the only one which is not directly explicit in Theorems~\ref{Theo:Class} and \ref{Thm:Param}) and a special example (Section~\ref{Sec:Z2pasAlg}) that illustrates some particularities of the classification.

The author would like to express his sincere gratitude to Michel Brion, that asked him the question, and thanks him especially for interesting discussions on this subject. Thanks also to the referees for their precious remarks and corrections.
\section{Introduction}
Si $X$ est une vari\'et\'e alg\'ebrique complexe, on note $\Bir(X)$ le groupe de ses transformations birationnelles (applications birationnelles de $X$ vers lui-m\^eme). On dit qu'un sous-groupe $G\subset \Bir(X)$ est \emph{alg\'ebrique} s'il dispose d'une structure de groupe alg\'ebrique qui rend rationnelle l'action $t:G\times X\dasharrow X$ induite par l'inclusion de $G$ dans $\Bir(X)$. Le lecteur remarquera que $t$ satisfait naturellement les axiomes classiques d'actions rationnelles de groupes alg\'ebriques (en particulier que $t$ est d\'efinie en $e\times X$); on peut \'egalement v\'erifier qu'un sous-groupe fini de $\Bir(X)$ est alg\'ebrique.

Le groupe $\Bir(\mathbb{P}^1)=\Aut(\mathbb{P}^1)$ est un groupe alg\'ebrique bien connu, isomorphe \`a $\PGL(2,\C)$. Lorsque $n\geq 2$, le groupe $\Bir(\mathbb{P}^n)$ n'est pas alg\'ebrique, ce qui rend sa structure plus difficile d'acc\`es.

L'\'etude des sous-groupes alg\'ebriques de $\Bir(\mathbb{P}^n)$ est un sujet abord\'e par un grand nombre de math\'ematiciens depuis d\'ej\`a bien longtemps. Lorsque $n=2$, citons la classification compl\`ete des sous-groupes alg\'ebriques connexes maximaux de $\Bir(\Pn)$ \'etablie par F. Enriques \cite{bib:Enr}, o\`u l'auteur d\'emontre que tout sous-groupe alg\'ebrique connexe maximal de $\Bir(\Pn)$ est le conjugu\'e de la composante connexe de l'identit\'e du groupe des automorphismes d'une surface rationnelle minimale. Une preuve modern se trouve dans \cite{bib:U3}. Lorsque $n=3$, la classification des sous-groupes alg\'ebriques connexes maximaux est due \`a F. Enriques et G. Fano; une preuve moderne se trouve dans \cite{bib:U1}, \cite{bib:U2}, \cite{bib:U3}, et \cite{bib:U4} (les r\'esultats sont aussi expliqu\'es dans \cite{bib:Oda}). Rappelons \'egalement l'\'etude  des sous-groupes alg\'ebriques connexes maximaux de  $\Bir(\mathbb{P}^n)$ qui contiennent un tore de dimension $n$ effectu\'ee par M. Demazure  \cite{bib:Demazure}.

Dans le cas non connexe, peu de r\'esultats sont connus, m\^eme en dimension~$2$. On dispose de nombreuses \'etudes dans le cas de groupes finis (voir \cite{bib:SK}, \cite{bib:Wim}, \cite{bib:BaB}, \cite{bib:deF}, \cite{bib:BeB}, \cite{bib:BeaPel}, le r\'esum\'e \cite{bib:IskReview} et ses r\'ef\'erences, ainsi que plus r\'ecemment \cite{bib:DoI}, \cite{bib:JBCR} \cite{bib:JBSMF}, \cite{bib:JBLin}), mais ces travaux ne permettent pas de d\'eterminer quels groupes finis repr\'esentent des groupes alg\'ebriques maximaux. Il reste aussi quelques points non \'etablis dans la classification des groupes finis (voir la section "What is left?" de \cite{bib:DoI}), comme la param\'etrisation des classes de conjugaison et la description pr\'ecise des groupes d'automorphismes de fibr\'es en coniques. 

Cet article propose une \'etude, \`a l'aide d'outils r\'ecents, des sous-groupes alg\'e\-briques de $\Bir(\Pn)$. Nous prouvons que tout sous-groupe alg\'e\-brique de $\Bir(\Pn)$ est contenu dans un sous-groupe alg\'ebrique maximal et donnons une classification compl\`ete et des sous-groupes alg\'ebriques maximaux, ainsi qu'une param\'etrisation de leurs classes de conjugaison par des vari\'et\'es alg\'ebriques. Quelques exemples caract\'eristiques sont pr\'ecis\'ement d\'ecrits. 

Notre approche utilise le point de vue moderne de la th\'eorie de Mori et du programme de Sarkisov, en esp\'erant une approche similaire en dimension sup\'erieure (tout en utilisant en r\'ealit\'e quelques subtilit\'es propres \`a la dimension~$2$). 

Cet article se pr\'esente de la mani\`ere suivante. A la section~\ref{Sec:Res}, nous rappelons quelques d\'efinitions et \'enonçons le r\'esultat principal (th\'eor\`emes~\ref{Theo:Class} et \ref{Thm:Param}), \`a savoir la classification des sous-groupes alg\'ebriques maximaux du groupe de Cremona, chaque groupe \'etant d\'ecrit en termes de $G$-fibration de Mori. Nous pr\'esentons ces groupes dans les sections~\ref{Sec:AutdP} et \ref{Sec:GaFibr}. Nous d\'emontrons ensuite (section~\ref{Sec:Contenu}) que tout groupe alg\'ebrique est contenu dans un des groupes de la classification, puis que chaque groupe de cette classification donne une $G$-fibration minimale (section~\ref{Sec:MinP}), qui est de plus birationnellement superrigide (section~\ref{Sec:Superrigidite}). La d\'emonstration des th\'eor\`emes (section~\ref{Sec:DemoThms}) d\'ecoule alors de ces trois observations. Nous terminons cet article (section~\ref{Sec:Z2Max}) avec une description pr\'ecise d'une famille de sous-groupes alg\'ebriques maximaux (la seule qui n'est pas directement explicit\'e par les th\'eor\`emes~\ref{Theo:Class} et \ref{Thm:Param}) et un exemple sp\'ecial (section~\ref{Sec:Z2pasAlg}), qui illustre certaines particularit\'es de la classification.

L'auteur tient \`a remercier chaleureusement Michel Brion de lui avoir pos\'e cette question et surtout de lui avoir fait profiter de discussions tr\`es int\'eressantes sur ce sujet. Merci aussi aux rapporteurs pour leurs pr\'ecieuses remarques et corrections.

\section{R\'esultats}\label{Sec:Res}
\subsection{Actions bir\'eguli\`eres sur des vari\'et\'es}
Rappelons que si $G\subset \Bir(\mathbb{P}^n)$ est un sous-groupe alg\'ebrique, alors il existe une application birationnelle  de $\mathbb{P}^n$ vers une vari\'et\'e rationnelle $X$ qui conjugue $G$ \`a un sous-groupe de $\Aut(X)$ (d\'emontr\'e par \cite{bib:Weil} pour le cas connexe,  \'etendu par \cite{bib:Rosenlicht} au cas non connexe, voir aussi \cite{bib:PopovVinberg}).
En passant \`a une compl\'etion \'equivariante (\`a l'aide de \cite{bib:Sum1}), on peut supposer que $X$ est projective; on choisira ensuite $X$ lisse \`a l'aide d'une r\'esolution \'equivariante des singularit\'es. Le groupe $G$ agit alors sur $X$ mais \'egalement sur $\Pic{X}$; on note $\Pic{X}^G$ la partie du groupe de Picard qui est fix\'ee par $G$. En utilisant un programme de Mori $G$-\'equivariant, on veut r\'eduire le rang de $\Pic{X}^G$ et obtenir une $G$-fibration de Mori (voir \cite[Example 2.18]{bib:KoM}), au sens de la d\'efinition suivante:
\begin{defi}\label{defi:GfibreMori}
Soit $X$ une vari\'et\'e projective lisse et $G$ un sous-groupe de $\Aut(X)$. Une application birationnelle $\varphi:X\dasharrow Y$ est \emph{$G$-\'equivariante} si $\varphi G \varphi^{-1}\subset \Aut(Y)$.

Une \emph{$G$-fibration de Mori} est une fibration $\pi:X\rightarrow Y$, o\`u $X$ et $Y$ sont des vari\'et\'es projectives $\mathbb{Q}$-factorielles \`a singularit\'es terminales, $Y$ est normale, $G\subset \Aut(X)$ pr\'eserve  la fibration,
$\dim Y<\dim X$, $\rkPic{X}^G-\rkPic{Y}^G=1$ et $-K_X\cdot C\geq 0$ pour toute courbe $C\subset X$, invariante par $G$ et contract\'ee par $\pi$.

Une $G$-fibration de Mori $\pi:X\rightarrow Y$ est \emph{birationnellement supperrigide} si toute application birationnelle $G$-\'equivariante $\varphi:X\dasharrow X'$ vers une $G$-fibration de Mori $\pi':X'\rightarrow Y'$ est un isomorphisme, induisant de plus un isomorphisme $Y\rightarrow Y'$.
\end{defi}

En dimension $2$ (le cas que nous traiterons ici), la d\'efinition implique que $X$ est une surface projective lisse et $Y$ est soit un point, soit isomorphe \`a $\mathbb{P}^1$. On verra de plus (Proposition~\ref{Prp:Isk}) que dans le premier cas $X$ est de del Pezzo et dans le deuxi\`eme cas, le morphisme $X\rightarrow Y$ est une fibration en coniques.

\subsection{Le programme minimal en dimension $2$}\label{SubSec:MMpDim2}
En dimension $2$, le programme minimal $G$-\'equivariant a \'et\'e \'etabli par Yu. Manin \cite{bib:Man}  (cas ab\'elien) et ensuite par V. Iskovskikh \cite{bib:IskMinimal} (cas g\'en\'eral). Les preuves se font \`a l'aide de couples $(G,S)$ o\`u $S$ est une surface projective lisse et $G$ est un groupe agissant fid\`element sur la surface; ces couples sont \'egalement appel\'es \'egalement $G$-surfaces. Deux cas sont trait\'es de la m\^eme mani\`ere (voir l'introduction de \cite{bib:Man}), \`a savoir lorsque $G$ est un sous-groupe fini de $\Aut(S)$ ou lorsque $G$ est un groupe de Galois. Dans ce dernier cas, le groupe est infini, mais l'action sur le groupe de Picard \'etant finie, les preuves sont similaires. On peut de la m\^eme mani\`ere \'etendre ces r\'esultats au cas o\`u $G$ est un sous-groupe alg\'ebrique de $\Aut(S)$, l'action sur le groupe de Picard \'etant \'egalement finie.

Nous formaliserons ceci avec la proposition~\ref{Prp:Isk} ci-dessous, \'enonc\'ee apr\`es quel\-ques conventions et d\'efinitions que nous utiliserons tout au long de l'article. 

\begin{conv}
On note $\Bir(\Pn)$ le groupe des transformations birationnelles de $\Pn=\Pn(\C)$, appel\'e \'egalement \emph{groupe de Cremona}. 
Dans la suite, toutes les surfaces seront suppos\'ees \emph{projectives, lisses et rationnelles}. Lorsque nous dirons qu'une transformation birationnelle \emph{fixe une courbe}, c'est que sa restriction \`a la courbe est l'identit\'e; si elle envoie juste la courbe birationnellement sur elle-m\^eme, nous dirons qu'elle \emph{laisse la courbe invariante} ou qu'\emph{elle la pr\'eserve}. 
\end{conv}

\begin{defi}\label{Def:Aut(S,pi)}
Si $\pi:X\rightarrow Y$ est un morphisme, on note $\Aut(X,\pi)\subset \Aut(X)$ le groupe des automorphismes qui pr\'eservent la fibration, i.e. qui agissent sur l'ensemble des fibres de $\pi$, et $\Aut(X/Y)\subset \Aut(X,\pi)$ le noyau de cette action.
\end{defi}

\begin{defi}\label{Def:PairesGequi}
Un couple $(G,S)$ est la donn\'ee d'une surface $S$ et d'un groupe $G$, sous-groupe alg\'ebrique de $\Aut(S)$.

On dit que le couple $(G,S)$ est \emph{minimal} si tout morphisme birationnel $G$-\'equivariant $\varphi:S\rightarrow S'$, o\`u $S'$ est une surface, est un isomorphisme.
\end{defi}

\begin{defi}
On dit qu'un morphisme $\pi:S\rightarrow \mathbb{P}^1$ est un \emph{fibr\'e en coniques} si les fibre g\'en\'erales de $\pi$ sont isomorphes \`a $\mathbb{P}^1$, et s'il existe un nombre fini de fibres singuli\`eres, chacune \'etant l'union transverse de deux courbes isomorphes \`a $\mathbb{P}^1$.
\end{defi}

\begin{prop}\label{Prp:Isk}
Soit $(G,S)$ un couple minimal.
Alors, il existe un morphisme $\pi:S\rightarrow Y$ qui soit une $G$-fibration de Mori (au sens de la d\'efinition~\ref{defi:GfibreMori}).

De plus un morphisme 
$G$-\'equivariant $\pi:S\rightarrow Y$ est une  $G$-fibration de Mori, si et seulement si l'un des deux cas suivants se pr\'esente:
\begin{enumerate}
\item
$Y$ est un point, $\Pic{S}^G\otimes\mathbb{Q}=\mathbb{Q}K_S$ et $S$ est une surface de del Pezzo.
\item
$Y\cong \mathbb{P}^1$, $\pi$ est une fibration en coniques  et $\Pic{S}^G\otimes\mathbb{Q}=\mathbb{Q}K_S\oplus \mathbb{Q} f$, o\`u $f$ est la classe de la fibre g\'en\'erale de $\pi$. \qed
\end{enumerate}
\end{prop}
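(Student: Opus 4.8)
The plan is to run the $G$-equivariant minimal model program of Manin and Iskovskikh, the only new ingredient being the passage from a finite group to an algebraic group. The enabling observation is that, although $G$ may be infinite, its action on the finitely generated free abelian group $\Pic{S}$ is \emph{finite}: the connected component $G^0$ acts trivially on this discrete lattice, so the action factors through the finite quotient $G/G^0$. This lets me average over $G/G^0$ to project any class onto $\Pic{S}^G$, and to work with the invariant subcone $\overline{NE}(S)^G$ of the cone of curves; the $G$-equivariant cone and contraction theorems then apply exactly as in the case of a finite group.

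For the first assertion I would run the MMP on $(G,S)$. Since $S$ is rational, $K_S$ is not pseudo-effective, so $\overline{NE}(S)^G$ carries a $K_S$-negative extremal ray $R$; let $\pi\colon S\to Y$ be its $G$-equivariant contraction. In dimension two there are no flips, so $\pi$ is either birational onto a smooth surface or of fibre type. The first case is a nontrivial $G$-equivariant birational morphism to a surface, contradicting minimality of $(G,S)$; hence $\pi$ is of fibre type with $\dim Y<2$. As $Y$ is normal and dominated by the rational surface $S$, it is a point or a smooth rational curve $\cong\mathbb{P}^1$, and contracting the single ray $R$ yields $\rkPic{S}^G-\rkPic{Y}^G=1$. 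This is the required $G$-Mori fibration.

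For the characterisation I would first compute $\rkPic{Y}^G$, which is $0$ if $Y$ is a point and $1$ if $Y\cong\mathbb{P}^1$ (automorphisms act trivially on $\Pic{\mathbb{P}^1}=\mathbb{Z}$); the relative-rank-one condition of Definition~\ref{defi:GfibreMori} then forces $\rkPic{S}^G=1$ or $2$. When $Y$ is a point and $\rkPic{S}^G=1$, the nonzero invariant class $K_S$ spans, so $\Pic{S}^G\otimes\mathbb{Q}=\mathbb{Q}K_S$; averaging an ample class over $G/G^0$ gives a $G$-invariant ample class, and as $\rkPic{S}^G=1$ the invariant ample cone is one of the two open rays of $\mathbb{Q}K_S$, which, $K_S$ not being ample, is spanned by $-K_S$. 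Thus $-K_S$ is ample and $S$ is del Pezzo. This is case~(1), and conversely such an $S$ satisfies every axiom of Definition~\ref{defi:GfibreMori} for $\pi\colon S\to Y$ with $Y$ a point, since $-K_S$ ample gives $-K_S\cdot C>0$ for all curves.

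When $Y\cong\mathbb{P}^1$ and $\rkPic{S}^G=2$, the Hodge index theorem gives signature $(1,1)$ on $\Pic{S}^G\otimes\mathbb{Q}$, in which the fibre class $f$ is an invariant isotropic vector spanning the contracted ray. The key step---and the main obstacle---is to show that the general fibre is isomorphic to $\mathbb{P}^1$, equivalently that $-K_S\cdot f>0$, so that $K_S$ and $f$ are independent and $\Pic{S}^G\otimes\mathbb{Q}=\mathbb{Q}K_S\oplus\mathbb{Q}f$. By adjunction $-K_S\cdot F=2-2p_a(F)\geq 0$, and the only competing possibility is a genus-one (elliptic) fibration with $-K_S\cdot F=0$; this is excluded because $\pi$ arises from a $K_S$-negative extremal contraction, which makes $-K_S$ relatively ample. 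Granting $F\cong\mathbb{P}^1$, smoothness of $S$ together with $p_a(F)=0$ forces each singular fibre to be a transverse union of two $(-1)$-curves, so $\pi$ is a conic bundle; this is case~(2). The converse is a direct check: on a conic bundle every invariant contracted curve has class a positive multiple of $f$, and $-K_S\cdot f=2\geq 0$, so both the anticanonical and the rank conditions of Definition~\ref{defi:GfibreMori} hold.
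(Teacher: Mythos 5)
Your route is essentially the paper's: the paper offers no proof of this proposition beyond citing the $G$-equivariant minimal model program of \cite{bib:Man} and \cite{bib:IskMinimal} together with \cite[Example~2.18]{bib:KoM}, the extension from finite to algebraic groups resting on exactly your opening remark that $G^0$ acts trivially on $\Pic{S}$, so that the action factors through the finite group $G/G^0$. Your existence argument via the equivariant cone and contraction theorems, your analysis of the case where $Y$ is a point, and both converse verifications are sound. The gap is in the forward direction of the characterisation when $Y\cong\mathbb{P}^1$, at the step you yourself single out as the main obstacle. To rule out fibres of genus one you invoke the fact that ``$\pi$ arises from a $K_S$-negative extremal contraction, which makes $-K_S$ relatively ample''; but in the second assertion $\pi$ is an \emph{arbitrary} $G$-equivariant morphism assumed only to satisfy Definition~\ref{defi:GfibreMori} --- it is not the morphism produced by the MMP in the first assertion --- and the relative ampleness of $-K_S$ is precisely what has to be proved. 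The argument is circular.

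Moreover, under the literal reading of Definition~\ref{defi:GfibreMori} (only $-K_S\cdot C\geq 0$, and only for $G$-\emph{invariant} contracted curves), this step cannot be repaired, because the statement itself then fails: the paper's own example of Section~\ref{Sec:Z2pasAlg} is a counterexample. There $(S,\pi)$ is a $(\Z{2})^2$-conic bundle with $8$ singular fibres, $G=\Aut(S,\pi)$, $(K_S)^2=0$; the pair $(G,S)$ is minimal (Corollary~\ref{Coro:MinFibC}), $\rkPic{S}^{G}=2$ (Proposition~\ref{Prp:Z2FibRam}), and the anticanonical map is a $G$-equivariant elliptic fibration $\eta:S\rightarrow\mathbb{P}^1$. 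Every curve $C$ contracted by $\eta$ lies in a fibre of $\eta$, whose class is $-K_S$, hence satisfies $-K_S\cdot C=0\geq 0$, and $\rkPic{S}^{G}-\rkPic{\mathbb{P}^1}^{G}=2-1=1$: all clauses of the definition hold, yet $\eta$ falls in neither case (1) nor case (2). So the proposition is correct only if the last condition of the definition is read as in the standard notion of Mori fibre space, namely $-K_S\cdot C>0$ for \emph{all} curves contracted by $\pi$, not merely the invariant ones. With that reading your adjunction computation $-K_S\cdot F=2-2p_a(F)>0$ excludes genus one at once, and the same positivity applied to every component of every singular fibre --- again, not only to invariant components, which need not exist over a non-fixed point of the base --- is also what legitimises your final claim that each singular fibre is a transverse union of two $(-1)$-curves. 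Your proof should either adopt this stronger reading explicitly or flag the discrepancy; as written, the key exclusion is unjustified.
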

Comme cela fut observ\'e pr\'ec\'edemment, cette proposition suit de \cite{bib:Man} et \cite{bib:IskMinimal}. Depuis, avec les travaux de Mori \cite{bib:Mor} et de ses successeurs sur le programme minimal, on peut trouver plusieurs d\'emonstrations ou \'enonc\'es plus r\'ecents, comme par exemple \cite[Example 2.18]{bib:KoM}.

\bigskip

Rappelons que si $X$ est une vari\'et\'e projective, le noyau $K$ de l'application $\Aut(X)\rightarrow \Aut(\Pic{X})$ est un sous-groupe alg\'ebrique lin\'eaire (se voit \`a l'aide d'un plongement de $X$ dans un espace projectif). Un groupe $G$ tel que $K\subset G \subset \Aut(X)$ est donc un sous-groupe alg\'ebrique si et seulement si son action sur $\Pic{X}$ est finie. 
Ceci nous permet d'\'enoncer la proposition suivante:

\begin{prop}\label{Prp:AutFibMestAlgebrique}
Si $S$ est une surface de del Pezzo, le groupe $\Aut(S)$ est un sous-groupe alg\'ebrique du groupe de Cremona.

Si $\pi:S\rightarrow \mathbb{P}^1$ est un fibr\'e en coniques, alors le groupe $\Aut(S,\pi)$ est un sous-groupe alg\'ebrique du groupe de Cremona.
\end{prop}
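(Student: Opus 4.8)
The plan is to apply the criterion recalled immediately before the statement: for a smooth projective (rational) surface $S$, the kernel $K$ of the action $\Aut(S)\to\Aut(\Pic{S})$ is a linear algebraic group, and any intermediate group $K\subset G\subset\Aut(S)$ is an algebraic subgroup of $\Bir(S)\cong\Bir(\Pn)$ precisely when $G$ acts through a finite group on $\Pic{S}$. So for each of the two assertions it suffices to check that the relevant group contains $K$ and that its image in $\Aut(\Pic{S})$ is finite. Recall also that, $S$ being rational by convention, a birational map $S\dashrightarrow\Pn$ identifies $\Bir(S)$ with the Cremona group, so ``algebraic subgroup'' and ``algebraic subgroup of $\Bir(\Pn)$'' mean the same thing here.

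For the del Pezzo case I take $G=\Aut(S)$, which trivially contains $K$. Every automorphism preserves the intersection form on $\Pic{S}$ and fixes the canonical class $K_S$, so the image of $\Aut(S)$ lands in the group of isometries of $\Pic{S}$ fixing $K_S$. Since $S$ is rational the intersection form has signature $(1,\rho-1)$ with $\rho=\rkPic{S}$ (algebraic index theorem), and since $-K_S$ is ample we have $K_S^2>0$; by the Hodge index theorem the orthogonal complement $K_S^{\perp}$ is then negative definite. An isometry fixing $K_S$ preserves $K_S^{\perp}$, and the orthogonal group of a definite lattice is finite, so (using that $\langle K_S\rangle\oplus K_S^{\perp}$ has finite index in $\Pic{S}$, which forces an integral isometry to be determined by its restrictions) the image of $\Aut(S)$ is finite, and the criterion yields the conclusion.

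For the conic bundle case I take $G=\Aut(S,\pi)$ and let $f\in\Pic{S}$ be the class of a fibre. First I would verify $K\subset G$: an element $g\in K$ satisfies $g^{*}f=f$, hence carries the base-point-free linear system $|f|$ — which defines $\pi$ — to itself, and therefore descends to an automorphism of the base $\mathbb{P}^1$, i.e. lies in $\Aut(S,\pi)$. Next, every element of $\Aut(S,\pi)$ fixes both $K_S$ and $f$, so its image lies in the group of isometries fixing the sublattice $\langle K_S,f\rangle$ pointwise. The Gram matrix of $(K_S,f)$ is $\bigl(\begin{smallmatrix}K_S^2 & -2\\ -2 & 0\end{smallmatrix}\bigr)$, using $K_S\cdot f=-2$ (adjunction on a fibre) and $f^2=0$; its determinant is $-4$, so $\langle K_S,f\rangle$ is non-degenerate of signature $(1,1)$, and by the Hodge index theorem its orthogonal complement is negative definite. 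As before, isometries fixing $\langle K_S,f\rangle$ preserve this definite complement and hence form a finite group, so $\Aut(S,\pi)$ acts finitely on $\Pic{S}$ and is algebraic.

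The only genuine content beyond bookkeeping is the finiteness of the image in $\Aut(\Pic{S})$, and everything there rests on the Hodge index theorem producing a negative definite orthogonal complement; the point I would be most careful to set up correctly is that one must fix \emph{enough} of the Picard lattice to force that definiteness — just $K_S$ suffices for the del Pezzo case, but for the conic bundle the fibre class $f$ is essential, since $\langle K_S\rangle^{\perp}$ alone is no longer definite. The inclusion $K\subset\Aut(S,\pi)$, via the identification of $\pi$ with the morphism attached to $|f|$, is the remaining step I would spell out explicitly.
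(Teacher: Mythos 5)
Your proof is correct, and while it rests on the same criterion recalled just before the statement --- a group $G$ sandwiched between the kernel $K$ of $\Aut(S)\rightarrow\Aut(\Pic{S})$ and $\Aut(S)$ is algebraic if and only if its action on $\Pic{S}$ is finite --- your verification of the finiteness is genuinely different from the paper's. For del Pezzo surfaces the paper does not argue on the Picard lattice at all: the embedding given by a multiple of $-K_S$ realizes $\Aut(S)$ directly as a linear algebraic subgroup of some $\PGL(N,\C)$, and the finiteness of the Picard action is only quoted from \cite{bib:DolWeyl}; you obtain it instead from the Hodge index theorem, $K_S^2>0$ forcing $K_S^{\perp}$ to be negative definite, so that its orthogonal group (into which the stabilizer of $K_S$ injects, by the finite-index argument you give) is finite. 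For conic bundles the paper observes that $K_S$ together with the components of the finitely many singular fibres generate $\Pic{S}$ up to finite index, a set which $\Aut(S,\pi)$ permutes; you instead fix the rank-two sublattice $\langle K_S,f\rangle$, whose Gram determinant $-4$ gives signature $(1,1)$, and invoke the index theorem again. Your route buys uniformity and self-containedness: a single lattice-theoretic argument handles both cases with no external citation, and it covers conic bundles without singular fibres (the Hirzebruch surfaces), for which the paper's generation statement is vacuous and which would otherwise need separate, easy treatment; you also make explicit the inclusion $K\subset\Aut(S,\pi)$, via the identification of $\pi$ with the morphism defined by the pencil $|f|$, a step the paper leaves implicit. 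What the paper's route buys is concreteness and economy: the anticanonical embedding exhibits the algebraic group structure on $\Aut(S)$ directly, and the singular-fibre argument is elementary, using only that the group permutes finitely many explicit generators rather than any signature considerations.
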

\begin{proof}
Le premier cas se voit \`a l'aide du plongement induit par un multiple de l'anti-canonique, qui montre que le groupe $\Aut(S)$ est un sous-groupe alg\'ebrique lin\'eaire; la finitude de l'action sur $\Pic{S}$ est \'egalement prouv\'ee dans \cite{bib:DolWeyl}. Dans le deuxi\`eme cas, la finitude de cette action suit du fait que $\Pic{S}$ est engendr\'e par les composantes de chaque fibre singuli\`ere (qui sont en nombre fini) et par $K_S$.
\qed\end{proof}

Afin de classifier les sous-groupes alg\'ebriques maximaux du groupe de Cremona, nous devons donc d\'eterminer quelles surfaces de del Pezzo et quels fibr\'es en coniques donnent lieu \`a des groupes alg\'ebriques maximaux. Nous verrons que ces groupes doivent agir minimalement sur la surface.

Parmi les fibr\'es en coniques, beaucoup ont des groupes d'automorphismes qui agissent minimalement sur la surface, mais qui ne repr\'esentent pas des groupes alg\'ebriques maximaux. 
Les fibr\'es en coniques repr\'esentant des groupes alg\'ebriques maximaux se divisent en fait en trois familles; nous \'enum\'erons celles-ci maintenant, avant de donner la classification  des sous-groupes alg\'ebriques maximaux du groupe de Cremona.

\subsection{Surfaces de Hirzebruch}Si $n\geq 1$, le groupe $\Aut(\mathbb{F}_n)$ pr\'eserve une unique fibration en coniques (qui est en fait une fibration en droites) sur la surface $\mathbb{F}_n$.

\subsection{Fibr\'es exceptionnels}\label{Subsec:DefFibExc}
On dit qu'un fibr\'e en coniques $\pi:S\rightarrow \mathbb{P}^1$ est \emph{exceptionnel} s'il est singulier au-dessus de $2n$ points ($n\geq 1$) et qu'il existe deux sections d'auto-intersection~$-n$ (cette notion a d\'ej\`a \'et\'e introduite dans \cite{bib:DoI}). Nous verrons (corollaire~\ref{Coro:IsoFibCExcp}) que la classe d'isomorphisme de $(S,\pi)$ est uniquement determin\'ee par la classe des points \`a fibre singuli\`ere, \`a action de $\Aut(\mathbb{P}^1)$ pr\`es. 

\subsection{${(\Z{2})^2}$-fibr\'es en coniques}\label{SubSec:Z22fibr}
On dit qu'un fibr\'e en coniques $\pi:S\rightarrow \mathbb{P}^1$ est un \emph{$(\Z{2})^2$-fibr\'e en coniques} si le groupe $\Aut(S/\mathbb{P}^1)$ est isomorphe \`a $(\Z{2})^2$ et si chacune des trois involutions de ce groupe fixe (point par point) une courbe irr\'eductible, rev\^etement double de $\mathbb{P}^1$ via $\pi$, ramifi\'ee au-dessus d'un nombre positif (pair) de points. 

\begin{defi}
Un \emph{triplet de ramification} est un triplet $\{A_1,A_2,A_3\}$, tel que $A_i$ est un ensemble de $2a_i\geq 2$ points de $\mathbb{P}^1$ pour $i=1,2,3$, et tel que tout point de $\mathbb{P}^1$ appartient \`a $0$ ou $2$ ensembles du triplet.
\end{defi}
Nous verrons (proposition~\ref{Prp:Z2FibRam}) que l'application qui associe \`a un $(\Z{2})^2$-fibr\'e en coniques le triplet form\'e des trois ensembles de points de ramification des trois courbes fix\'ees par les involutions de $\Aut(S/\mathbb{P}^1)$ est une bijection entre les classes d'isomorphisme de $(\Z{2})^2$-fibr\'es en coniques et les triplets de ramification, \`a action de $\Aut(\mathbb{P}^1)$ pr\`es.
Nous d\'emontrerons aussi que $\Aut(S,\pi)$ est un sous-groupe alg\'ebrique maximal du groupe de Cremona si et seulement si la surface $S$ n'est pas une surface de del Pezzo (ce qui est le cas notamment d\`es que le nombre de points de ramification est assez grand). Mentionnons qu'en g\'en\'eral, $\Aut(S,\pi)\not=\Aut(S)$ et que ce dernier groupe n'est pas toujours un groupe alg\'ebrique  (section~\ref{Sec:Z2pasAlg}).

\subsection{La classification}
Nous pouvons maintenant \'enoncer la classification des sous-groupes alg\'ebriques du groupe de Cremona, qui a \'et\'e annonc\'ee pr\'ec\'edemment et qui sera d\'emontr\'ee \`a la section~\ref{Sec:DemoThms}.
\begin{theo}\label{Theo:Class}
Tout sous-groupe alg\'ebrique du groupe de Cremona  est contenu dans un sous-groupe alg\'ebrique maximal.

Les sous-groupes alg\'ebriques maximaux du groupe de Cremona sont les conjugu\'es des groupes $G=\Aut(S,\pi)$ o\`u $S$ est une surface rationnelle et $\pi:S\rightarrow Y$ un morphisme, tels que $(S,\pi)$ soit dans l'un des cas suivants:
\begin{enumerate}
\item
$Y$ est un point, $G=\Aut(S)$ et $S$ est une des surfaces de del Pezzo suivantes:
\begin{enumerate}
\item
$\Pn$, $\mathbb{P}^1\times\mathbb{P}^1$;
\item
une surface de del Pezzo de degr\'e~$1$, $4$, $5$ ou $6$;
\item
une surface de del Pezzo de degr\'e~$3$ (respectivement $2$) telle que le couple $(\Aut(S),S)$ soit minimal (toujours vrai si le degr\'e est $2$) et telle que les points fixes de l'action de $\Aut(S)$ sur $S$ soient sur les courbes exceptionnelles; 
\end{enumerate}
\item
$Y\cong \mathbb{P}^1$ et $\pi$ est une des fibrations en coniques suivantes:
\begin{enumerate}
\item
la fibration en droites de la surface de Hirzebruch $\mathbb{F}_n$, pour $n\geq 2$;
\item
un fibr\'e en coniques exceptionnel ayant au minimum $4$ fibres singuli\`eres;
\item
un ${(\Z{2})^2}$-fibr\'e en coniques, tel que $S$ ne soit pas une surface de del Pezzo (ayant en particulier au moins $6$ fibres singuli\`eres).
\end{enumerate}
\end{enumerate}
De plus, dans chacun des cas ci-dessus, le couple $(G,S)$ est minimal et la fibration $\pi:S\rightarrow Y$ est une $G$-fibration de Mori birationnellement superrigide.

En particulier, deux tels groupes $G=\Aut(S,\pi)$ et $G'=\Aut(S',\pi')$ sont conjugu\'es si et seulement s'il existe un isomorphisme $S\rightarrow S'$ qui envoie les fibres de $\pi$ sur celles de $\pi'$.
\end{theo}
D\'ecrivons maintenant plus pr\'ecis\'ement la structure de ces groupes alg\'ebriques maximaux et \'enonçons aussi que les classes de conjugaison des groupes sont param\'etr\'ees par des vari\'et\'es alg\'ebriques.

\begin{theo}\label{Thm:Param}
Les sous-groupes alg\'ebriques maximaux du groupe de Cremona  appartiennent -- \`a conjugaison pr\`es --  \`a l'une des onze familles suivantes:
\begin{enumerate}
\item
$\Aut(\Pn)\cong\PGL(3,\C)$
\item
$\Aut(\mathbb{P}^1\times\mathbb{P}^1)\cong(\PGL(2,\C))^2\rtimes \Z{2}$
\item
$\Aut(S_6)\cong (\C^{*})^2\rtimes (\Sym_3 \times \Z{2})$, o\`u $S_6$ est la surface de del Pezzo de degr\'e~$6$.
\item
$\Aut(\mathbb{F}_n)\cong\C^{n+1}\rtimes \GL(2,\C)/\mu_n$, o\`u $\mathbb{F}_n$ est la $n$-i\`eme surface de Hirzebruch et $\mu_n$ est la $n$-torsion du centre de $\GL(2,\C)$, avec $n\geq 2$.
\item
$\Aut(S,\pi)$ o\`u $(S,\pi)$ est un fibr\'e en coniques exceptionnel, ayant des fibres singuli\`eres au-dessus d'un ensemble $\Delta\subset \mathbb{P}^1$ de $2n$ points distincts, $n\geq 2$. La projection de $\Aut(S,\pi)$ sur $\PGL(2,\C)$ donne une suite exacte
\[1\rightarrow  \C^{*}\rtimes \Z{2}\rightarrow \Aut(S,\pi) \rightarrow H_{\Delta} \rightarrow 1,\]
o\`u $H_{\Delta}$ est le sous-groupe fini de $\PGL(2,\C)=\Aut(\mathbb{P}^1)$ constitu\'e des \'el\'ements qui laissent l'ensemble $\Delta$ invariant.
 \item
$\Aut(S_5)\cong \Sym_5$ o\`u $S_5$ est la surface de del Pezzo de degr\'e~$5$.
\item
$\Aut(S)\cong (\Z{2})^4\rtimes H_S$ o\`u $S$ est n'importe quelle surface de del Pezzo de degr\'e~$4$, obtenue en \'eclatant $5$ points de $\Pn$, et $H_S$ est le groupe des automorphismes de $\Pn$ qui laissent l'ensemble des points \'eclat\'es invariant.
\item
$\Aut(S)$ o\`u $S$ est une surface de del Pezzo de degr\'e~$3$ (cubique lisse), de la forme suivante:
\begin{enumerate}
\item
le rev\^etement triple de $\Pn$, ramifi\'e le long d'une cubique lisse $\Gamma$. Si $S$ est la cubique de Fermat alors $\Aut(S)=(\Z{3})^3\rtimes \Sym_4$; sinon on a une suite exacte
\[1\rightarrow \Z{3}\rightarrow \Aut(S)\rightarrow H_ \Gamma\rightarrow 1,\]
o\`u $H_{\Gamma}$ est le groupe des automorphismes de $\Pn$ qui laissent la cubique invariant; $H_{\Gamma}$ contient un sous-groupe isomorphe \`a $(\Z{3})^2$;
\item
la surface cubique de Clebsch\footnote{en choisissant $F=\zeta^3W+X+\zeta Y+\zeta^4Z$, o\`u $\zeta$ est une racine $5$-i\`eme de l'unit\'e et $\sigma=(W,X,Y,Z)\mapsto (X,Y,Z,W)$, le morphisme $(W:X:Y:Z)\mapsto (F:F\sigma:F\sigma^2:F\sigma^3:-\sum_{i=0}^3 F\sigma^i)$ induit un isomorphisme entre la surface cubique d'\'equation $WX^2+XY^2+YZ^2+Z^2Y=0$ dans $\mathbb{P}^3$ avec la surface cubique "diagonale" de Clebsch d'\'equation $\sum X_i=\sum (X_i)^3=0$ dans $\mathbb{P}^4$.}, d'\'equation $WX^2+XY^2+YZ^2+Z^2Y=0$ et de groupe d'automorphismes isomorphe \`a $\Sym_5$;
\item
une surface cubique d'\'equation
$W^3+W(X^2+Y^2+Z^2)+\lambda XYZ=0$, pour un $\lambda \in \C$, 
$9\lambda^3\not=8\lambda$, $8\lambda^3\not=-1$, dont le groupe d'isomorphisme est isomorphe \`a $\Sym_4$.
\end{enumerate}
\item
$\Aut(S)\cong \Z{2}\rtimes H_S$ o\`u $S$ est une surface de del Pezzo de degr\'e~$2$, rev\^etement double d'une quartique lisse $Q_S\subset \Pn$ telle que $H_S=\Aut(Q_S)$ agit sans point fixe sur la quartique priv\'ee de ses points de bitangence.
\item
$\Aut(S)$ o\`u $S$ est n'importe quelle surface de del Pezzo de degr\'e~$1$, rev\^etement double d'un cone quadratique $Q$, ramifi\'e le long d'une courbe $\Gamma_S$ de degr\'e~$6$, intersection compl\`ete de $Q$ avec une surface cubique de $\mathbb{P}^3$. On a une suite exacte (en g\'en\'eral non scind\'ee)
\[1 \rightarrow \Z{2} \rightarrow \Aut(S)\rightarrow H_S \rightarrow 1,\] o\`u $H_S$ est le groupe des automorphismes de $Q$ qui laissent la courbe $\Gamma_S$ invariant.
\item
$\Aut(S,\pi)$ o\`u $(S,\pi)$ est un $(\Z{2})^2$-fibr\'e en coniques, tel que $S$ ne soit pas une surface de del Pezzo.

La projection de $\Aut(S,\pi)$ sur $\PGL(2,\C)$ donne une suite exacte
\[1\rightarrow V\rightarrow \Aut(S,\pi) \rightarrow H_{V} \rightarrow 1,\]
o\`u $V\cong (\Z{2})^2$ contient trois involutions $\sigma_1,\sigma_2,\sigma_3$ fixant chacune une courbe hyperrelliptique, ramifi\'ee au-dessus des points de $A_1,A_2,A_3\subset \mathbb{P}^1$ et $H_V\subset \Aut(\mathbb{P}^1)$ est le sous-groupe fini qui laisse l'ensemble $\{A_1,A_2,A_3\}$ invariant.
 \end{enumerate} 
 Les onze familles sont disjointes et les classes de conjugaison au sein de chaque famille sont param\'etr\'ees respectivement par
 \begin{enumerate}
 \item[(1-3)]
Le point.
 \item[(4)]
Il n'y a qu'une classe de conjugaison pour chaque entier $n\geq 2$.
\item[(5)]
Pour chaque entier $n\geq 2$, les ensembles de $2n$ points de $\mathbb{P}^1$, modulo l'action de $\PGL(2,\C)=\Aut(\mathbb{P}^1)$.
 \item[(6)]
Le point.
 \item[(7)]
 Les classes d'isomorphisme de surfaces de del Pezzo de degr\'e~$4$.
 \item[(8)]
Les classes d'isomorphisme des surfaces cubiques en question, donn\'ees respectivement
\begin{enumerate}
\item
par les classes d'isomorphisme des courbes elliptiques;
\item
pour la surface de Clebsch, il n'y a qu'une classe d'isomorphisme;
\item
par les classes de $\{\lambda \in \C\ |\ 9\lambda^3\not=8\lambda, 8\lambda^3\not=-1\}$ modulo l'\'equivalence $\lambda \sim -\lambda$.
\end{enumerate}
 \item[(9)]
Les classes d'isomorphisme des quartiques lisses de $\Pn$ ayant des groupes d'automorphismes agissant sans point fixe sur la quartique priv\'ee de ses points de bitangence.
 \item[(10)]
 Les classes d'isomorphisme de surfaces de del Pezzo de degr\'e~$1$.
\item[(11)]
Les triplets de ramification $\{A_1,A_2,A_3\} \subset \mathbb{P}^1$ d\'eterminant des $(\Z{2})^2$-fibr\'es en coniques sur des surfaces qui ne sont pas de del Pezzo, modulo l'action de $\Aut(\mathbb{P}^1)$.
 \end{enumerate}
\end{theo}
Nous d\'ecrivons plus pr\'ecis\'ement le cas des surfaces de del Pezzo de degr\'e~$2$, o\`u toutes les surfaces ne donnent pas forc\'ement des groupes alg\'ebriques maximaux, au lemme~\ref{Prp:delPezzodeg2}. 
De m\^eme, les $(\Z{2})^2$-fibr\'es en coniques donnant lieu \`a des groupes alg\'ebriques maximaux si et seulement si la surface n'est pas de del Pezzo, ces cas sont d\'etermin\'es \`a la section~\ref{Sec:Z2Max}. 

\section{Automorphismes de surfaces de del Pezzo}\label{Sec:AutdP}
\subsection{G\'en\'eralit\'es}
Rappelons qu'une surface de del Pezzo est isomorphe \`a $\Pn$, $\mathbb{P}^1\times\mathbb{P}^1$ ou \`a l'\'eclatement d'un ensemble $\Delta$ de $r$ points dans le plan, avec $1\leq r \leq 8$ et tels que la surface ne contiennent pas de courbe d'auto-intersection~$\leq -2$. Ceci est \'equivalent \`a ce qu'il n'existe pas de droite passant par $3$ points de $\Delta$, ni de conique passant par $6$ points de $\Delta$, ni de cubique passant par $8$ points de $\Delta$ et \'etant singuli\`ere en l'un des points. Par la suite, nous dirons simplement que les points sont en position g\'en\'erale. 

Pour toute surface $S$ de del Pezzo, le groupe $\Aut(S)$ est un groupe alg\'ebrique (proposition~\ref{Prp:AutFibMestAlgebrique}).
 Nous d\'emontrerons plus tard que $\Aut(S)$ est un sous-groupe maximal du groupe de Cremona si et seulement si le couple $(\Aut(S),S)$ est minimal et que l'action de $\Aut(S)$ sur $S$ priv\'e de ses courbes exceptionnelles a des orbites qui ont toutes une taille au moins \'egale \`a $(K_S)^2$; ces deux conditions \'etant toujours v\'erifi\'ees pour $(K_S)^2\in\{1,4,5,6,9\}$.
 
Le degr\'e d'une surface de del Pezzo est le carr\'e de son diviseur canonique.  Rappelons qu'il existe une unique classe d'isomorphismes de surfaces de del Pezzo de degr\'e~$d$, pour $d=5,6,7,9$, deux classes d'isomorphisme pour $d=8$ (les surfaces de Hirzebruch $\mathbb{F}_0=\mathbb{P}^1\times\mathbb{P}^1$ et $\mathbb{F}_1$) et une infinit\'e de classes d'isomorphisme pour $d=1,2,3,4$.

Les groupes d'automorphismes de $\Pn$ et $\mathbb{P}^1\times\mathbb{P}^1$ sont tr\`es connus; il s'agit des groupes alg\'ebriques $\PGL(3,\C)$ et $(\PGL(2,\C)\times \PGL(2,\C))\rtimes \Z{2}$. Il est assez naturel que ceux-ci soient maximaux; nous d\'emontrerons ceci plus tard (section ~\ref{Sec:DemoThms}). Dans la suite de cette section, $\Delta$ sera un ensemble de $r$ points en position g\'en\'erale, avec $1\leq r\leq 8$, et $S_{\Delta}$ sera la surface de del Pezzo obtenue en \'eclatant $r$ points en position g\'en\'erale de $\Pn$. Si $r=1,2$, le couple $(\Aut(S_{\Delta}),S_{\Delta})$ n'est pas minimal et le groupe $\Aut(S_{\Delta})$ est conjugu\'e \`a un sous-groupe de respectivement $\Aut(\Pn)$ et $\Aut(\mathbb{P}^1\times\mathbb{P}^1)$. Si $r=3,4$ nous noterons $S_{\Delta}=S_{9-r}$ vu que la surface ne d\'epend pas de l'ensemble des points \'eclat\'es mais uniquement du degr\'e, \'egal \`a $9-r$.

Les groupes d'automorphismes des surfaces de del Pezzo ont \'et\'e d\'ecrits \`a de nombreuses reprises depuis le $\mathrm{XIX}^e$ si\`ecle dans de nombreux article, avec des listes plus ou moins pr\'ecises. Citons en particulier les travaux classiques de \cite{bib:SK}, \cite{bib:Wim} et \cite{bib:Seg}, ainsi que le travail r\'ecent \cite{bib:DoI}, o\`u le lecteur trouvera d'autres r\'ef\'erences sur le sujet.

\subsection{La surface de del Pezzo de degr\'e~$6$}\label{SubSec:dPezzo6}
La surface $S_6$ peut \^etre vue comme \[S_6=\{ \big( (x:y:z) , (a:b:c)\big) \in \Pn\times \Pn\ |\ ax=by=cz\},\] et l'\'eclatement $S_6\rightarrow \Pn$ est donn\'e par la premi\`ere projection. 
Le groupe $\Aut(S_6)$ est isomorphe \`a $(\C^{*})^2\rtimes (\Sym_3 \times \Z{2})$, o\`u les facteurs de ce groupe sont donn\'es respectivement par l'action diagonale sur les coordonn\'ees, les permutations simultan\'ees des coordonn\'ees de chaque facteur et la permutation des deux facteurs.

Il y a $6$ courbes exceptionnelles sur $S_6$, correspondant aux courbes exceptionnelles des $3$ points \'eclat\'es et des $3$ droites passant par deux des points. Ces courbes forment un hexagone et la projection de $\Aut(S_6)$ sur  $\Sym_3\times \Z{2}$ donne l'action sur l'hexagone. Il y a $3$ fibrations en coniques sur la surface, et la projection de $\Aut(S_6)$ sur $\Sym_3$ donne l'action sur l'ensemble des trois fibrations.

\subsection{La surface de del Pezzo de degr\'e~$5$}
La surface $S_5$ contient $10$ courbes exceptionnelles, correspondant aux courbes exceptionnelles des $4$ points \'eclat\'es et aux $6$ droites passant par deux des points.
Il y a $5$ ensembles de $4$ courbes exceptionnelles disjointes. Le groupe $\Aut(S_5)$ agit sur celles-ci et est, via cet action, isomorphe au goupe sym\'etrique $\Sym_5$. 
Ce groupe est en fait engendr\'e par le groupe $\Sym_4$ correspondant aux automorphismes de $\Pn$ qui laissent $\Delta$ invariant et par une involution quadratique ayant comme points-bases $3$ des $4$ points de $\Delta$ et fixant le quatri\`eme.

\subsection{Les surfaces de del Pezzo de degr\'e~$4$}\label{SubSec:dPdeg4}
Supposons que  $r=5$. La surface $S_{\Delta}$ est isomorphe \`a la sous-vari\'et\'e de $\mathbb{P}^4$, donn\'ee par les \'equations
\[\sum_{i=0}^4 x_i^2=\sum_{i=0}^4 \lambda_i x_i^2=0,\]
pour des $\lambda_i$ tous diff\'erents  (voir par exemple \cite[Lemme~6.5]{bib:DoI}).
La $2$-torsion  du sous-groupe diagonal de $\Aut(\mathbb{P}^4)=\PGL(5,\C)$ -- que nous noterons $T\cong (\Z{2})^4$ -- agit sur $S_{\Delta}$ et est engendr\'ee par cinq involutions fixant chacune une courbes elliptique -- trace de l'\'equations $x_i=0$ sur $S_{\Delta}$ pour $i=0,...,4$. 
Il existe $10$ fibrations en coniques sur la surface $S_{\Delta}$, r\'eunies en $5$ couples. Chaque couple correspond au pinceau des droites par un des points de $\Delta$ et au pinceau des coniques passant par les $4$ autres points. Le groupe $\Aut(S_{\Delta})$ agit sur ces $5$ couples  et cette action donne une suite exacte (voir \cite{bib:JBLin}) 
\[1\rightarrow T\rightarrow \Aut(S)\rightarrow H_{\Delta}\rightarrow 1,\]
o\`u $H_{\Delta}$ correspond au groupe des automorphismes de $\mathbb{P}^2$ qui laissent l'ensemble $\Delta$ invariant. Il existe donc une section \`a cette suite exacte et $\Aut(S)=T\rtimes H_{\Delta}$.
Remarquons qu'en g\'en\'eral $H_{\Delta}$ est trivial et que sinon, il peut \^etre isomorphe \`a $\Z{2}$, $\Z{4}$, ou au groupes di\'edraux \`a $6$ ou $10$ \'el\'ements.
\subsection{Les surfaces de del Pezzo de degr\'e~$3$}
Pour $r=6$, l'ensemble des surfaces $S_{\Delta}$ forme la famille des surfaces de del Pezzo qui est probablement la plus connue, puisqu'il s'agit en fait des surfaces cubiques lisses de $\mathbb{P}^3$. Les groupes des automorphismes de telles surfaces ont \'et\'e classifi\'es par Kantor et Wiman, puis corrig\'es respectivement par Segre \cite{bib:Seg} et Hosoh \cite{bib:Ho2}; le lecteur trouvera un historique  dans \cite{bib:DoI}. Une surface cubique g\'en\'erale n'ayant pas d'automorphismes, les couples $(\Aut(S_{\Delta}),S_{\Delta})$ qui repr\'esentent des groupes alg\'ebriques maximaux correspondent \`a une des familles bien particuli\`eres de surfaces cubiques.

\begin{prop}\label{Prp:CubicFibCpasMin}
Soit $S\subset \mathbb{P}^3$ une cubique lisse et soit $\pi:S\rightarrow \mathbb{P}^1$ une fibration en coniques. Alors, le couple $(\Aut(S,\pi),S)$ n'est pas minimal.
En particulier, si $G\subset \Aut(S)$, le couple $(G,S)$ est minimal si et seulement si $\rkPic{S}^G=1$.
\end{prop}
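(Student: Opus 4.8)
The plan is to exploit the special geometry of conic bundles on a cubic surface: I claim every such fibration singles out a distinguished $G$-invariant line, whose contraction witnesses non-minimality.

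First I would fix the class $f\in\Pic{S}$ of a fibre of $\pi$, which satisfies $f^2=0$ and $f\cdot K_S=-2$, and set $\ell:=-K_S-f$. Using $K_S^2=3$ (since $S$ is a smooth cubic) a direct computation gives $\ell^2=-1$ and $\ell\cdot K_S=-1$; by adjunction $\ell$ is thus the class of a $(-1)$-curve. As $S$ is a del Pezzo surface, such a class is represented by a unique irreducible curve, namely a line of the cubic $S\subset\mathbb{P}^3$. Concretely, a general fibre is a smooth conic spanning a plane $P\subset\mathbb{P}^3$, and the residual intersection $P\cap S$ is precisely this line; this shows directly that $\ell$ is effective (and is a bisection of $\pi$).

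Next I would observe that $G=\Aut(S,\pi)$ preserves the class $f$: every $g\in G$ permutes the fibres of $\pi$, so $g^*f=f$, while $g^*K_S=K_S$ always holds, whence $g^*\ell=\ell$. Since $\ell$ is the unique effective divisor in its class, $G$ leaves the line $\ell$ invariant. Contracting $\ell$ then yields a non-trivial $G$-equivariant birational morphism $S\to S'$ onto a smooth surface (a del Pezzo surface of degree~$4$), so the couple $(\Aut(S,\pi),S)$ is not minimal. This is the heart of the argument, and the only genuinely delicate point is checking that $\ell$ really is an effective $(-1)$-curve and not merely a numerical class; the residual-conic picture makes this transparent.

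For the final assertion I would combine this with Proposition~\ref{Prp:Isk}. If $(G,S)$ is minimal for some $G\subset\Aut(S)$, then $S$ carries a $G$-fibration de Mori $\pi:S\rightarrow Y$, and Proposition~\ref{Prp:Isk} leaves two cases: either $Y$ is a point and $\rkPic{S}^G=1$, or $\pi$ is a fibration en coniques and $\rkPic{S}^G=2$. In the latter case $G\subset\Aut(S,\pi)$, and the line $\ell$ constructed above is $G$-invariant, so the same contraction contradicts minimality; hence $\rkPic{S}^G=1$. Conversely, if $\rkPic{S}^G=1$, any non-trivial $G$-equivariant birational morphism $S\to S'$ would contract at least one $G$-orbit of $(-1)$-curves and therefore force $\rkPic{S}^G\geq\rkPic{S'}^G+1\geq 2$, a contradiction; thus $(G,S)$ is minimal. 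The main obstacle lies entirely in the first part, after which the equivalence follows formally.
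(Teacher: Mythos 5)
Your proof is correct and follows essentially the same route as the paper: the paper also contracts the residual line of the planar conic fibres (the line $\ell$ with $-K_S=f+\ell$) $\Aut(S,\pi)$-equivariantly, and deduces the last assertion from Proposition~\ref{Prp:Isk}. Your class computation $\ell=-K_S-f$ and the explicit rank argument for the equivalence merely spell out details the paper leaves implicit.
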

\begin{proof}
Une fibre g\'en\'erale de $\pi$ est une conique planaire de $\mathbb{P}^3$, la droite r\'esiduelle dans le plan associ\'e est invariant par $\Aut(S,\pi)$ et peut \^etre contract\'ee de mani\`ere $\Aut(S,\pi)$-\'equivariante; la paire $(\Aut(S,\pi),S)$ n'est donc pas minimale.
La derni\`ere assertion suit de la proposition~\ref{Prp:Isk}.
\qed\end{proof}

À l'aide de cette proposition, on peut d\'eterminer parmi les groupes d'automorphismes de surfaces cubiques ceux qui donnent des couples minimaux en d\'eterminant leur action sur le groupe de Picard. Ceci se calcule en utilisant la formule de Lefschetz, la repr\'esentation des \'el\'ements comme \'el\'ements du groupe de Weyl, ou directement en trouvant des courbes invariantes. Nous pr\'ef\'erons omettre une telle recherche, longue et fastidieuse, qui peut se retrouver dans \cite{bib:DoI}. Le r\'esultat suivant en d\'ecoule:
\begin{prop}\label{Prp:CubicMin}
Soit $S\subset \mathbb{P}^3$ une surface cubique lisse. Alors le couple $(\Aut(S),S)$ est minimal si et seulement si $S$ est isomorphe \`a une cubique dont l'\'equation est l'une des suivantes
\begin{enumerate}
\item
$W^3+X^3+Y^3+Z^3+\alpha XYZ=0$
\item
$W^2X+X^2Y+Y^2Z+Z^2W=0$
\item
$W^3+W(X^2+Y^2+Z^2)+\beta XYZ=0$
\item
$W^3+X^3+\gamma WX(Y+\delta Z)+Y^3+Z^3=0$
\end{enumerate}
pour $\alpha,\beta,\gamma,\delta\in\C$, avec $9\beta^3\not=8\beta$, $8\beta^3\not=-1$.
Dans le cas $(2)$, $\Aut(S)\cong \Sym_5$ et dans le cas $(3)$, $\Aut(S)\cong \Sym_4$.\qed
\end{prop}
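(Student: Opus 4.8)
The plan is to reduce minimality to a rank condition on the Picard group and then sift the known classification of automorphism groups of smooth cubic surfaces. First, applying Proposition~\ref{Prp:CubicFibCpasMin} with $G=\Aut(S)$, the pair $(\Aut(S),S)$ is minimal if and only if $\rkPic{S}^{\Aut(S)}=1$. Since $K_S$ is fixed by every automorphism, the invariant part of $\Pic{S}\otimes\mathbb{Q}$ always contains $\mathbb{Q}K_S$, so $\rkPic{S}^{\Aut(S)}\geq 1$; minimality is thus equivalent to requiring that $K_S$ span the entire invariant subspace.

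To turn this into a computation, I would use that a smooth cubic surface is the blow-up of $\Pn$ in six points in general position, so $\Pic{S}\cong\mathbb{Z}^7$, the orthogonal complement $K_S^{\perp}$ is the root lattice $E_6$, and the action of $\Aut(S)$ on $\Pic{S}$ gives a faithful embedding $\Aut(S)\hookrightarrow W(E_6)$ (faithful because an automorphism of $S$ is determined by the permutation it induces on the $27$ lines). Then
\[\rkPic{S}^{\Aut(S)}=1+\dim_{\mathbb{Q}}(E_6\otimes\mathbb{Q})^{\Aut(S)},\]
and the dimension on the right is the multiplicity of the trivial representation, i.e. the trace average $\frac{1}{|\Aut(S)|}\sum_{g}\mathrm{tr}(g\mid E_6\otimes\mathbb{Q})$. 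The individual traces can be extracted either from the Lefschetz fixed-point formula, from the characteristic polynomials of the elements regarded as Weyl group elements, or by producing explicit $\Aut(S)$-invariant curves --- each genuinely new invariant class pushing the rank above $1$.

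The real content is then to feed into this framework the complete list of pairs $(\Aut(S),S)$ for smooth cubic surfaces, due to Kantor and Wiman and corrected by Segre~\cite{bib:Seg} and Hosoh~\cite{bib:Ho2} (conveniently collected in \cite{bib:DoI}). For each entry one computes the invariant rank by the recipe above and retains exactly the surfaces for which it equals $1$; these are precisely the four families $(1)$--$(4)$ of the statement. In the process one pins down the two distinguished types: the Clebsch cubic $(2)$, on which the $27$ lines carry a transitive $\Sym_5$-action so that $\Aut(S)\cong\Sym_5$ and (as one checks) $\Sym_5$ has no nonzero invariant in $E_6\otimes\mathbb{Q}$; and the family $(3)$, where $\Aut(S)\cong\Sym_4$, the numerical constraints $9\beta^3\neq 8\beta$ and $8\beta^3\neq -1$ ensuring smoothness and fixing the exact automorphism group.

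I expect the main difficulty to be completeness and bookkeeping rather than any single trace computation: one must trust (or reprove) that the classification is exhaustive, and carefully treat the special parameter values at which $\Aut(S)$ jumps --- a surface generic in a non-minimal family can become minimal at such a value, and conversely --- while checking the genericity conditions that cut out each of the four families. For this reason, as the surrounding text already indicates, the most economical route is to quote the detailed case-by-case analysis of \cite{bib:DoI} and simply read off the rank-$1$ cases, rather than redoing the long and tedious verification by hand.
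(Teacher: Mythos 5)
Your proposal is correct and follows essentially the same route as the paper: the paper likewise reduces minimality to the condition $\rkPic{S}^{\Aut(S)}=1$ via Proposition~\ref{Prp:CubicFibCpasMin}, notes that the invariant rank can be computed by the Lefschetz formula, by viewing elements in the Weyl group, or by exhibiting invariant curves, and then explicitly declines to carry out the long case-by-case verification, deferring it to \cite{bib:DoI}. Your added detail (the embedding into $W(E_6)$ and the trace-average formula for the invariant rank) is a faithful elaboration of exactly the computation the paper sketches and omits.
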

Nous d\'emontrerons (\`a la section~\ref{Sec:DemoThms}) qu'un tel groupe est un sous-groupe alg\'ebrique maximal du groupe de Cremona si et seulement si chacun de ses points fixes sur $S$ appartient  \`a une courbe exceptionnelle. Le lemme suivant d\'ecrit ces cas.
\begin{lemm}\label{Lem:Min3pts}
Soit $S\subset \mathbb{P}^3$ une surface cubique lisse, telle que le couple $(\Aut(S),S)$ soit minimal. 

Si $S$ est isomorphe \`a une surface dont l'\'equation est l'une des trois premi\`eres de la proposition~\ref{Prp:CubicMin}, alors les orbites du groupe $\Aut(S)$ sur $S$ ont au minimum $3$ points distincts.

Sinon, le groupe $\Aut(S)$ fixe un point de $S$ qui n'appartient \`a aucune courbe exceptionnelle.
\end{lemm}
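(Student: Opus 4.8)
The plan is to reformulate both assertions in terms of the linear action on the anticanonical model. Since $S$ is a smooth cubic in $\mathbb{P}^3=\mathbb{P}(V)$ with $V=\C^4$, embedded by $|-K_S|$, the group $\Aut(S)$ acts linearly, i.e. $\Aut(S)\subset\PGL(V)$. A point $p\in\mathbb{P}^3$ is fixed by a subgroup $H\subset\Aut(S)$ exactly when the line $\C p\subset V$ is an $H$-eigenline (a one-dimensional subrepresentation). The first assertion is equivalent to saying that $\Aut(S)$ has no orbit of size $1$ or $2$ on $S$: an orbit of size $1$ is a global fixed point, and an orbit $\{p,q\}$ of size $2$ forces its (index-two, hence normal) stabiliser $N$ to fix both $p$ and $q$. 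So in both situations it suffices to list the eigenlines of $\Aut(S)$ and of its index-two subgroups $N$, and to check that the corresponding points do not lie on $S$.

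For the Clebsch surface (equation $(2)$ of Proposition~\ref{Prp:CubicMin}), $V$ is the standard irreducible $4$-dimensional representation of $\Aut(S)\cong\Sym_5$, which stays irreducible upon restriction to the unique index-two subgroup $A_5$; hence neither $\Sym_5$ nor $A_5$ has an eigenline at all, and there is no orbit of size $\le 2$. For equation $(3)$, with $\Aut(S)\cong\Sym_4$ acting by permuting $X,Y,Z$ together with the even sign changes, one has $V=\C W\oplus\langle X,Y,Z\rangle$ where the second summand is an irreducible $3$-dimensional representation that remains irreducible over $A_4$; the only eigenline is $\C W$, and the point $(1:0:0:0)$ is not on $S$ since the equation takes the value $W^3\neq 0$ there. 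For equation $(1)$, the $3$-torsion translations of the branch cubic give a Heisenberg subgroup $(\Z{3})^2\subset\Aut(S)$ fixing $W$, whose irreducible $3$-dimensional representation on $\langle X,Y,Z\rangle$ (generated by $\mathrm{diag}(1,\zeta,\zeta^2)$ and the cyclic permutation of $X,Y,Z$) has no eigenline; being a $3$-group it lies in every index-two subgroup, so once more the only possible eigenline is $\C W$ and $(1:0:0:0)\notin S$ (for the Fermat member $V$ is itself irreducible, which is even cleaner). This settles the first assertion.

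For the second assertion, with $S$ of the form $(4)$, I would first exhibit the automorphisms $g_0\colon(W,X,Y,Z)\mapsto(\zeta W,\zeta^2 X,Y,Z)$ and $\tau\colon(W,X,Y,Z)\mapsto(X,W,Y,Z)$, which generate a subgroup isomorphic to $\Sym_3$ inside $\Aut(S)$ and fix the line $L_0=\{W=X=0\}$ pointwise. Thus the three points of $L_0\cap S$, cut out by $Y^3+Z^3=0$, are fixed by $\Aut(S)$. It then remains to check that one of them, say $p=(0:0:1:-1)$, lies on no line of $S$: a line of $S$ through $p$ would lie in the tangent plane $T_pS=\{Y+Z=0\}$, along which $S$ restricts to the plane cubic $W^3+X^3+\gamma(1-\delta)WXY=0$; this cubic has no linear factor through $p=(0:0:1)$ as soon as $\gamma(1-\delta)\neq 0$, which holds throughout the range of parameters defining family $(4)$ (the excluded values $\gamma=0$ and $\delta=1$ yield the more symmetric surfaces of the earlier cases).

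The main obstacle is precisely this second assertion. One must be sure that $\Aut(S)$ is not strictly larger than the displayed $\Sym_3$ for the relevant $(\gamma,\delta)$ -- otherwise the fixed locus could shrink below $L_0$ and the argument producing a fixed point would break -- and one must control the degenerate parameter values so that $S$ genuinely belongs to family $(4)$ and so that the chosen point $p$ really avoids all twenty-seven exceptional curves. Both points reduce to the tangent-plane computation above, which is the one genuinely surface-specific step; the remainder is the uniform representation-theoretic bookkeeping of eigenlines carried out in the first two paragraphs.
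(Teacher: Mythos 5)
Your proof of the first assertion is correct and complete, and it is essentially the paper's argument in cleaner clothes: the paper, too, reduces orbits of size at most $2$ to fixed points of subgroups of index at most $2$, and then exhibits, case by case, explicit automorphisms whose fixed loci on $S$ are empty or get moved around; your formulation via eigenlines and irreducibility of the relevant $3$- and $4$-dimensional representations (together with the observation that $A_5$, $A_4$, respectively the Heisenberg $3$-group, lie in every index-two subgroup) packages those same computations uniformly, using only the facts about $\Aut(S)$ recorded in Proposition~\ref{Prp:CubicMin}. No objection there.

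The genuine gap is in the second assertion, exactly where your closing paragraph locates it -- but your proposed repair is wrong. First, knowing that $\Aut(S)$ is not strictly larger than the visible $\Sym_3$ does not ``reduce to the tangent-plane computation'': it requires the classification of automorphism groups of smooth cubic surfaces (\cite{bib:Seg}, \cite{bib:DoI}), which is precisely what the paper invokes; by that classification, either $\Aut(S)=\Sym_3$, or $\delta^3=1$ and $\Aut(S)$ is generated by $\Sym_3$ together with $g:(W:X:Y:Z)\mapsto(W:X:\delta Z:\delta^2Y)$, or $S$ is isomorphic to one of the three earlier surfaces. Second, and fatally, your claim that the excluded parameter values ``yield the more symmetric surfaces of the earlier cases'' is false for $\delta=1$ (to which $\delta^3=1$ reduces after rescaling $Z$ by a cube root of unity): the smooth surface $W^3+X^3+Y^3+Z^3+\gamma WX(Y+Z)=0$, $\gamma\neq 0$, has $\Aut(S)\cong\Sym_3\times\Z{2}$, of order $12$, hence is not isomorphic to any surface of equations $(1)$--$(3)$ of Proposition~\ref{Prp:CubicMin}, whose groups have order at least $24$. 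For this surface the extra involution $g$ (here $Y\leftrightarrow Z$) swaps your two good points $(0:0:1:-\omega)$ and $(0:0:1:-\omega^2)$ and fixes only $p=(0:0:1:-1)$; and your own tangent-plane computation shows that $p$ is then an Eckardt point, since $S\cap\{Y+Z=0\}$ is given by $W^3+X^3=0$, i.e. three lines of $S$ through $p$. So for these surfaces your argument produces no $\Aut(S)$-fixed point avoiding the exceptional curves, and no other choice among the three candidate points can repair it. (For comparison: the paper's own proof is also loose at this precise spot -- it asserts that all three points $(0:0:1:-\omega^i)$ remain fixed by $\Aut(S)$ when $\delta^3=1$, which the involution $g$ visibly contradicts -- but it does at least rest the determination of $\Aut(S)$ on the cited classification, a step your write-up leaves entirely unargued.)
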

\begin{proof}
D'apr\`es la proposition~\ref{Prp:CubicMin}, l'\'equation de la surface est -- \`a isomorphisme pr\`es -- de l'une des quatre formes cit\'ees dans la proposition~\ref{Prp:CubicMin}. Notons $\omega=e^{2\im\pi/3}$, une racine cubique de l'unit\'e et examinons chacune des $4$ possibilit\'es.

Dans le premier cas, l'automorphisme $W\mapsto \omega W$ a des orbites de taille $3$, sauf sur la courbe elliptique plane $W=X^3+Y^3+Z^3+\lambda XYZ=0$. L'automorphisme $(W:X:Y:Z)\mapsto  (W:X:\omega Y:\omega^2 Z)$ agit sans point fixe sur cette courbe elliptique, ce qui implique le r\'esultat.
Dans le deuxi\`eme cas, l'automorphisme $(W:X:Y:Z)\mapsto (W:\zeta X: \zeta^{-1} Y:\zeta^{3}Z)$, o\`u $\zeta$ est une racine $5$-i\`eme de l'unit\'e, agit sur la surface et fixe $4$ points. Ceux-ci \'etant permut\'es transitivement par l'automorphisme $(W:X:Y:Z)\mapsto (X:Y:Z:W)$, toutes les orbites ont une taille au moins \'egale \`a $4$.
Dans le troisi\`eme cas, observons tout d'abord que $\beta\not=0$ (car la surface est lisse et donc irr\'eductible). Les points de $S$ fix\'es par l'automorphisme $(W:X:Y:Z) \mapsto (W:Y:Z:X)$ sont de la forme $(u:v:v:v)$ pour $u,v \in \C^{*}$. Or, le groupe engendr\'e par $(W:X:Y:Z) \mapsto (W:X:-Y:-Z)$ et $(W:X:Y:Z) \mapsto (W:-X:-Y:Z)$ envoie chacun de ces points sur $3$ autres points diff\'erents.

Dans le dernier cas, le groupe $\Aut(S)$ contient le groupe $H$ isomorphe \`a $\Sym_3$, engendr\'e par $(W:X:Y:Z)\mapsto (\omega W:\omega^2 X:Y:Z)$ et $(W:X:Y:Z)\mapsto (X:W:Y:Z)$. Suivant les valeurs de $\mu$ et $\lambda$, on a trois possibilit\'es (voir \cite{bib:DoI} ou \cite{bib:Seg}): ou bien $\Aut(S)=H$ (qui est le cas g\'en\'eral), ou bien $\mu^3=1$ et $\Aut(S)$ est engendr\'e par $H$ et $(W:X:Y:Z) \mapsto (W:X:\mu Z:\mu^2Y)$, ou bien $\lambda$ et $\mu$ sont tels que $S$ est isomorphe \`a l'une des surfaces pr\'ec\'edentes (notamment lorsque $\lambda=0$). On peut supposer que l'on est dans l'un des deux premiers cas; les points $(0:0:1:-\omega^i)$ pour $i=0,1,2$ sont alors fix\'es par $\Aut(S)$. On v\'erifie par un calcul direct qu'au moins un des trois points n'appartient \`a aucune des $27$ droites de la surface.
\qed\end{proof}

\subsection{Les surfaces de del Pezzo de degr\'e~$2$}\label{SubSec:dP2}
Lorsque $r=7$, le morphisme anti-canonique donne un rev\^etement double $S_{\Delta}\rightarrow \Pn$, ramifi\'e le long d'une quartique lisse (voir \cite{bib:BeauLivre}). R\'eciproquement, toute telle surface est une surface de del Pezzo de degr\'e~$2$. L'involution li\'ee au rev\^etement est appel\'ee \emph{involution de Geiser} et l'action sur la quartique donne lieu a la suite exacte 
\[1\rightarrow \Z{2}\rightarrow \Aut(S_{\Delta}) \rightarrow H_{S_{\Delta}}\rightarrow 1\]
o\`u $H_{S_\Delta}$ est le groupe des automorphismes de $\mathbb{P}^2$ qui laissent la quartique invariante, ou de mani\`ere \'equivalente le groupe des automorphismes de la quartique. On peut de plus observer que cette suite exacte est scind\'ee et donc que $\Aut(S_{\Delta})\cong \Z{2}\times H_{S_{\Delta}}$.

L'involution de Geiser fixant une courbe de genre $3$, elle n'agit bir\'eguli\`erement sur aucune surface de del Pezzo de degr\'e~$\geq 3$. Le couple $(\Aut(S_{\Delta}),S_{\Delta})$ est donc toujours minimale. 
Nous d\'emontrerons (\`a la section~\ref{Sec:DemoThms}) que $\Aut(S_{\Delta})$ est un sous-groupe alg\'ebrique maximal du groupe de Cremona si et seulement si chacun de ses points fixes sur $S_{\Delta}$ appartient  \`a une courbe exceptionnelle. Le lemme suivant d\'ecrit ces cas, reprenant la classification des automorphismes de surfaces de del Pezzo de degr\'e $2$ de \cite{bib:DoI}, inspir\'ee des r\'esultats sur les quartiques lisses.

\begin{lemm}\label{Prp:delPezzodeg2}
Soit $S$ une surface de del Pezzo de degr\'e~$2$, plong\'e dans $\mathbb{P}(2,1,1,1)$ comme $W^2=F(X,Y,Z)$, o\`u $F$ est l'\'equation d'une quartique lisse. Si les points-fixes de l'action  de $\Aut(S)$ sur $S$ sont sur les courbes exceptionnelles, alors $S$ est isomorphe \`a l'une des surfaces  suivantes (les notations pour la structure de $\Aut(S)$ sont celles de \cite{bib:DoI}, qui reprend celles de l'Atlas des groupes finis).\begin{center}
\begin{tabular}{| l|l | l | l | l|l|}
\hline
&\footnotesize{Ordre}&\footnotesize{Structure} &\footnotesize{\'Equation}&\footnotesize{Restrictions}\\ 
&\footnotesize{$|\Aut(S)|$}&\footnotesize{de $\Aut(S)$} &\footnotesize{de la surface $S$}&\footnotesize{param\`etres}\\ \hline
1&336 &\small{$2\times L_2(7)$}&\footnotesize{$W^2=X^3Y+Y^3Z+Z^3X$}&\\ \hline
2&192&\small{$2\times (4^2:S_3)$}&\footnotesize{$W^2=X^4+Y^4+Z^4$}&\\ \hline
3&96&\small{$2\times 4A_4$}&\footnotesize{$W^2=X^4+aX^2Y^2+Y^4+Z^4$}&\footnotesize{$a^2 = -12$}\\ \hline
4&48&\small{$2\times S_4$}&\footnotesize{$W^2=X^4+Y^4+Z^4$}&\footnotesize{$a\ne \frac{-1\pm \sqrt{-7}}{2}$}\\
&&&\footnotesize{$\hphantom{W^2=}a(X^2Y^2+X^2Z^2+Y^2Z^2)$}&\\\hline
5&32&\small{$2\times AS_{16}$}&\footnotesize{$W^2=X^4+aX^2Y^2+Y^4+Z^4$}&\footnotesize{$a^2\ne 0,-12$}\\ 
&&&&\footnotesize{$a^2\ne 4,36$}\\ \hline
6&16&\small{$2\times D_8$}&\footnotesize{$W^2=X^4+Y^4+Z^4+$}&\footnotesize{$a,b\ne 0 $}\\  
&&&\footnotesize{$\hphantom{W^2=}aX^2Y^2 +bZ^2XY$}&\\\hline
7&12&\small{$2\times S_3$}&\footnotesize{$W^2=Z^4+aZ^2XY+$}& \\ 
&&&\footnotesize{$\hphantom{W^2=}Z(X^3+Y^3)+ bX^2Y^2$}&\\\hline
8&$8$&$2^3$&\footnotesize{$W^2=X^4+Y^4+Z^4$}&\footnotesize{$a,b,c\ne 0$}\\
&&&\footnotesize{$\hphantom{W^2=}aX^2Y^2+bX^2Z^2+cY^2Z^2$}&\footnotesize{distincts}\\ \hline
\end{tabular}
\end{center}
\end{lemm}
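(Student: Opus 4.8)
The plan is to reduce the fixed-point hypothesis to a concrete condition on the branch quartic, and then to run through the classification of \cite{bib:DoI}. First I would invoke the structure recalled in Section~\ref{SubSec:dP2}: the Geiser involution $\sigma$ is central in $\Aut(S)$, one has $\Aut(S)\cong \Z{2}\times H_S$ with $H_S=\Aut(Q_S)$, and the fixed locus of $\sigma$ is the ramification curve $R$, which maps isomorphically onto the branch quartic $Q_S$. By adjunction $R$ has genus $3$ (since $R\sim -2K_S$ and $(K_S)^2=2$). As $\sigma\in\Aut(S)$, every point of $S$ fixed by the whole group lies on $R$; and since $\sigma$ fixes $R$ pointwise while $R\to Q_S$ is an $H_S$-equivariant isomorphism, the common fixed points of $\Aut(S)$ correspond bijectively to the common fixed points of $\Aut(Q_S)$ on $Q_S$.

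Second, I would translate "lying on an exceptional curve" into the geometry of the quartic. Each exceptional curve $E$ satisfies $E^2=-1$ and meets $R$ in $R\cdot E=(-2K_S)\cdot E=2$ points; under the anticanonical map $S\to\Pn$ the $56$ exceptional curves map $2{:}1$ onto the $28$ bitangents of $Q_S$, and a point of $R$ lies on an exceptional curve exactly when the tangent line to $Q_S$ at its image is a bitangent. Hence the hypothesis of the lemma is equivalent to: every common fixed point of $\Aut(Q_S)$ on $Q_S$ has a bitangent as tangent line. Observe already that if $\Aut(Q_S)$ is trivial then the common fixed locus of $\Aut(S)$ is all of $R$, a genus-$3$ curve that cannot sit inside the finite union of the rational exceptional curves; so the condition forces $H_S$ to be nontrivial, and, a nontrivial automorphism of $Q_S$ having finitely many fixed points, the common fixed locus on $Q_S$ is then a finite set.

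Third, I would feed in the classification of smooth plane quartics with their automorphism groups given in \cite{bib:DoI}. Since $\Aut(Q_S)\subset\PGL(3,\C)$, its common fixed points in the plane are its common eigen-directions; intersecting these with $Q_S$ produces the finitely many candidate common fixed points, and in each normal form of \cite{bib:DoI} I would decide by direct computation whether a given candidate has a bitangent as its tangent line. For the families of \cite{bib:DoI} that do not appear in the table, I would exhibit one common fixed point of $\Aut(Q_S)$ whose tangent line is not a bitangent, so that the condition fails; for the eight families listed I would check that every common fixed point (if any) is a bitangent contact. The groups and equations in the statement are then read off directly from the cited classification.

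The main obstacle is the case-by-case verification in this last step: the real work is the explicit determination, in each normal form, of the common eigenvectors of $\Aut(Q_S)$ lying on the quartic and the computation of the corresponding tangent lines. I expect this to be routine but long, the delicate cases being the borderline families where $\Aut(Q_S)$ fixes only a few points of $Q_S$; there one must compute the local intersection of the tangent line with $Q_S$ to distinguish a genuine bitangent contact from an ordinary fixed point.
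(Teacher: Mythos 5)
Your proposal is correct and follows essentially the same route as the paper: reduce via the Geiser involution to common fixed points of $\Aut(Q_S)$ on the quartic, translate the exceptional-curve condition into the tangent line at such a point being a bitangent, and then run through the classification of \cite{bib:DoI}, eliminating the families whose groups fix a point with non-bitangent tangent. The only real difference is in the final verification: where you plan a uniform case-by-case computation of common eigen-directions and bitangency, the paper checks by hand that cases 6--8 of its table have no fixed points at all, and disposes of the five largest groups at once by a structural observation (a finite subgroup of $\PGL(3,\C)$ preserving a smooth plane curve and fixing a point of it must be a cyclic-by-cyclic extension, which groups such as $2\times L_2(7)$ or $2\times (4^2{:}S_3)$ are not), thereby avoiding most of the computation you defer to the last step.
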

\begin{proof}
La classification des groupes d'automorphismes des surfaces de del Pezzo de degr\'e $2$, o\`u de mani\`ere \'equivalente celles des quartiques lisses du plan, a \'et\'e \'etablie par Kantor et Wiman, puis revue et corrig\'ee par I.~Dolgachev \cite{bib:DolgTopic} et F.~Bars \cite{bib:Bar};  ceci est r\'esum\'e dans \cite{bib:DoI}. Nous reprenons simplement cette derni\`ere r\'ef\'erence et enlevons les cas $VI$, $VIII$, $XI$, $XII$, $XIII$, qui ont des points-fixes qui n'appartiennent \`a aucune courbe exceptionnelle (rappelons que ces derni\`eres correspondent aux bitangentes de la quartique). On v\'erifie \`a la main que les cas $VII$, $IX$ et $X$ de \cite{bib:DoI} (ici not\'es $6$, $7$ et $8$) n'admettent aucun point fixe.

Le fait que les groupes $1$ \`a $5$ n'admettent aucun point fixe suit de l'observation suivante: si un sous-groupe fini $G\subset\PGL(3,\C)=\Aut(\Pn)$ pr\'eserve une courbe lisse et fixe un point $P$ de la courbe, alors l'action sur l'espace tangent $T_P$ donne une suite exacte $1\rightarrow G'\rightarrow G\rightarrow H\rightarrow 1$ o\`u $G$ et $H$ sont cycliques.
En effet le groupe $H$ agit sur $\mathbb{P}(T_P)\cong \mathbb{P}^1$ avec un point fixe (correspondant \`a la tangente de la courbe) et donc est cyclique, car fini; le groupe $G'$ est cyclique car il s'identifie \`a un sous-groupe de $\mathbb{C}^*$ agissant sur $T_P$ par homoth\'eties.
\qed\end{proof}
\subsection{Les surfaces de del Pezzo de degr\'e~$1$}
Si $r=8$, le morphisme donn\'e par le double de l'anti-canonique donne un morphisme $S_{\Delta}\rightarrow \mathbb{P}^3$, dont l'image est un c\^one quadratique $Q$ et tel que la restriction $S_{\Delta}\rightarrow Q$ est un rev\^etement double, ramifi\'e le long d'une courbe lisse $\Gamma_{\Delta}$, intersection compl\`ete de $Q$ avec une surface de degr\'e~$3$ (voir \cite{bib:BeauLivre}). On dispose \`a nouveau d'une suite exacte naturelle
\[1\rightarrow \Z{2}\rightarrow \Aut(S_{\Delta}) \rightarrow H_{S_{\Delta}}\rightarrow 1\]
o\`u le noyau est engendr\'e par la classique \emph{involution de Bertini} et $H_{S_{\Delta}}$ est le groupe des automorphismes de $Q$ qui pr\'eservent la courbe $\Gamma_{\Delta}$ ou de mani\`ere \'equivalente le groupe des automorphismes de $\Gamma_{\Delta}$. Remarquons qu'en g\'en\'eral la suite n'est pas scind\'ee (on peut notamment trouver des racines carr\'ees de l'involution de Bertini -- voir \cite[Table I, classe [1.B2.2]{bib:JBCR}] et \cite[Table 8, Type XIX]{bib:DoI}).

Les groupes d'automorphismes de surfaces de del Pezzo de degr\'e~$1$ ont \'et\'e classifi\'es par Kantor et Wiman; une approche moderne se trouve dans \cite{bib:DoI}.

\section{Groupes alg\'ebriques pr\'eservant une fibration rationnelle}\label{Sec:GaFibr}
\subsection{G\'en\'eralit\'es}
Une \emph{pseudo-fibration rationnelle} est une application rationnelle $\pi:S\dasharrow \mathbb{P}^1$ dont la fibre g\'en\'erale est une courbe rationnelle. Si de plus $\pi$ est un morphisme, alors nous dirons que c'est une \emph{fibration rationnelle}. Si chaque fibre singuli\`ere est isomorphe \`a une r\'eunion de deux courbes isomorphes \`a $\mathbb{P}^1$ se coupant transversalement en un point, alors $\pi$ est une \emph{fibration en coniques} et si de plus il n'existe pas de fibre singuli\`ere, alors $\pi$ est une \emph{fibration en droites}.

Un th\'eor\`eme de Noether-Enriques \cite[Th\'eor\`eme III.4]{bib:BeauLivre} implique que pour toute pseudo-fibration rationnelle $\pi:S\dasharrow \mathbb{P}^1$ il existe une application birationnelle $S\dasharrow \Pn$ qui envoie les  fibres g\'en\'erales de $\pi$ sur les droites passant par un point; o\`u de mani\`ere \'equivalente qu'il existe une application birationnelle $S\dasharrow \mathbb{P}^1\times \mathbb{P}^1$ qui envoie  les  fibres g\'en\'erales de $\pi$ sur celles d'une des deux fibrations en droites de $\mathbb{P}^1\times \mathbb{P}^1$.

\begin{defi}
Un fibr\'e en coniques $\pi:S\rightarrow \mathbb{P}^1$ se note \'egalement $(S,\pi)$.

Soient $(S,\pi)$ et $(S',\pi')$ deux fibr\'es en coniques. Une application birationnelle $\varphi:S\dasharrow S'$ est une \emph{application birationnelle de fibr\'es en coniques} si et elle envoie une fibre g\'en\'erale de $\pi$ sur une fibre g\'en\'erale de $\pi'$.
\end{defi}
Rappelons que l'ensemble des automorphismes du fibr\'e en coniques $(S,\pi)$ est not\'e $\Aut(S,\pi)$ et que ce dernier est un sous-groupe alg\'ebrique du groupe de Cremona (proposition~\ref{Prp:AutFibMestAlgebrique}).

\begin{defi}\label{Def:Jonq}
On note $\Jon$ le \emph{groupe de de Jonqui\`eres}, constitu\'e des applications de la forme
\[(x,y)\dasharrow \left(\frac{ax+b}{cx+d},\frac{\alpha(x)y+\beta(x)}{\gamma(x)+\delta(x)}\right),\]
pour $\left(\begin{array}{cc}a& b\\ c &d\end{array}\right)\in\PGL(2,\C)$ et $\left(\begin{array}{cc}\alpha& \beta\\ \gamma &\delta \end{array}\right)\in\PGL(2,\C(x))$. 
Ce groupe est naturellement isomorphe \`a
$\PGL(2,\C(x))\rtimes \PGL(2,\C),$
o\`u $\PGL(2,\C)=\Aut(\mathbb{P}^1)$ agit sur $\C(x)=\C(\mathbb{P}^1)$ via l'action de $\Aut(\mathbb{P}^1)$ sur $\mathbb{P}^1$.
\end{defi}

En appliquant \`a nouveau le th\'eor\`eme de Noether-Enriques, nous trouvons que pour toute fibration en coniques $(S,\pi)$, le groupe alg\'ebrique $\Aut(S,\pi)$ est conjugu\'e \`a un sous-groupe du \emph{groupe de de Jonqui\`eres} $\Jon$, introduit \`a la d\'efinition~\ref{Def:Jonq}.

R\'eciproquement, si $G$ est un sous-groupe alg\'ebrique du groupe de de Jonqui\`eres, alors $G$ est conjugu\'e \`a un sous-groupe alg\'ebrique de $\Aut(S,\pi)$, pour un certain fibr\'e en coniques $(S,\pi)$.

\begin{lemm}\label{Lem:ZKZf}
Soit $(S,\pi)$ un fibr\'e en coniques avec au moins une fibre exceptionnelle. Si $\Pic{S}^{\Aut(S,\pi)}$ est de rang $2$, il est \'egal \`a $\mathbb{Z}K_S\oplus \mathbb{Z}f$, o\`u $f$ est le diviseur d'une fibre de $\pi$.
\end{lemm}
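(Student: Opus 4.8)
The plan is to reduce everything to pinning down the two coordinates of an invariant class in the pair $(K_S,f)$, using the hypothesis about the singular fibre exactly once — to control the $K_S$-coordinate. Throughout write $G=\Aut(S,\pi)$.

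First I would record that $K_S,f\in\Pic{S}^G$: the canonical class is invariant under every automorphism, and $f$ is invariant because $G$ preserves the fibration, hence permutes the fibres, all of which are linearly equivalent. Since $f^2=0$ and $K_S\cdot f=-2$, the classes $K_S$ and $f$ are linearly independent and span a rank-$2$ sublattice; as $\Pic{S}^G$ is assumed to have rank $2$, this yields $\Pic{S}^G\otimes\mathbb{Q}=\mathbb{Q}K_S\oplus\mathbb{Q}f$. Thus every $D\in\Pic{S}^G$ can be written $D=\alpha K_S+\beta f$ with $\alpha,\beta\in\mathbb{Q}$, and the whole problem becomes proving $\alpha,\beta\in\mathbb{Z}$.

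To control $\alpha$ I would use the hypothesis that there is at least one singular fibre; let $C_1$ be one of its two components. It is a smooth rational curve contained in a fibre, so $C_1\cdot f=0$ and $C_1^2=-1$, whence by adjunction $K_S\cdot C_1=-1$. Intersecting $D=\alpha K_S+\beta f$ with $C_1$ gives $D\cdot C_1=-\alpha$; since $D$ and $C_1$ are genuine divisor classes, $D\cdot C_1\in\mathbb{Z}$ and therefore $\alpha\in\mathbb{Z}$. This is exactly the step that fails on a Hirzebruch surface $\mathbb{F}_n$, where no such $C_1$ exists and where $\mathbb{Z}K_S\oplus\mathbb{Z}f$ has index $2$ in $\Pic{\mathbb{F}_n}$, which is why the singular-fibre hypothesis is indispensable.

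It then remains to see that $\beta\in\mathbb{Z}$. For this I would intersect with a section $s$ of the conic bundle meeting the general fibre once, $s\cdot f=1$: then $D\cdot s=\alpha(K_S\cdot s)+\beta$, so $\beta=D\cdot s-\alpha(K_S\cdot s)\in\mathbb{Z}$ once $\alpha\in\mathbb{Z}$. Equivalently, $s\cdot f=1$ shows that $f$ is a primitive class, so that $\beta f=D-\alpha K_S\in\Pic{S}$ already forces $\beta\in\mathbb{Z}$. The existence of such a section is the one point that needs justification and is the only real obstacle; I expect to settle it by the Noether--Enriques description recalled above, namely that $(S,\pi)$ is obtained from a Hirzebruch surface by blow-ups of points lying on fibres, under which the strict transform of a section of $\mathbb{F}_n$ remains a section meeting $f$ once (alternatively, one may descend primitivity of $f$ along the contraction $\eta:S\to S'$ of $C_1$, using $\Pic{S}=\eta^*\Pic{S'}\oplus\mathbb{Z}[C_1]$ and $f=\eta^*f'$, down to the manifestly primitive fibre class on $\mathbb{F}_n$). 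Combining the two steps gives $D=\alpha K_S+\beta f$ with $\alpha,\beta\in\mathbb{Z}$, that is $\Pic{S}^G=\mathbb{Z}K_S\oplus\mathbb{Z}f$, the sum being direct since $\Pic{S}$ is torsion-free.
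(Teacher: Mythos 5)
Your proof is correct and follows essentially the same route as the paper: write an invariant class as a $\mathbb{Q}$-combination $aK_S+bf$, intersect with a component of a singular fibre to get $a\in\mathbb{Z}$, then deduce $b\in\mathbb{Z}$ from the fact that $bf\in\Pic{S}$ and $f$ is primitive. The only difference is cosmetic: you justify the primitivity of $f$ (via a section coming from the Hirzebruch model), a point the paper's proof uses implicitly without comment.
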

\begin{proof}Soit $C\in \Pic{S}^{\Aut(S,\pi)}$; le groupe $\Pic{S}^{\Aut(S,\pi)}$ \'etant de rang $2$, \'ecrivons $C=aK_S+bf$, pour $a,b\in \mathbb{Q}$. L'intersection de $C$ avec une composante d'une fibre singuli\`ere valant $-a$, nous en d\'eduisons que $a\in \mathbb{Z}$.  Ceci montre que $aK_S\in \Pic{S}$ et donc que $bf\in \Pic{S}$, d'o\`u la relation $b\in\mathbb{Z}$.
\qed\end{proof}

\bigskip

Nous pr\'esentons maintenant les sous-groupes alg\'ebriques du groupe de de Jonqui\`eres annonc\'es dans l'introduction. Nous d\'emontrerons plus tard (proposition~\ref{Prp:ContenuFibreConiques}) que les sous-groupes alg\'ebriques maximaux de $\Bir(\Pn)$ qui pr\'eservent une fibration rationnelle sont les conjugu\'es de ces groupes.
\subsection{Automorphismes des surfaces de Hirzebruch}\label{SubSec:AutHirz}
Si $\pi:S\rightarrow \mathbb{P}^1$ est une fibration en droites, alors $S$ est isomorphe \`a une surface de Hirzebruch.
On note $\mathbb{F}_n$ la surface de Hirzebruch $\mathbb{P}_{\mathbb{P}^1}(\mathcal{O}_{\mathbb{P}^1} \oplus \mathcal{O}_{\mathbb{P}^1}(n))$, pour $n\geq 0$. 

La surface $\mathbb{F}_0$ est isomorphe \`a $\mathbb{P}^1\times\mathbb{P}^1$ et il existe deux fibrations en droites sur cette surface. Le groupe des automorphismes de $\mathbb{F}_0$ qui pr\'eservent une des deux fibrations (et donc l'autre) est isomorphe \`a $(\PGL(2,\C))^2$. Ce groupe n'est pas maximal, vu qu'il est contenu dans le groupe 
$\Aut(\mathbb{F}_0)\cong (\PGL(2,\C))^2\rtimes \Z{2}$, o\`u $\Z{2}$ correspond \`a la permutation des facteurs.

Si $n\geq 1$, alors il existe une unique fibration en coniques sur $\mathbb{F}_n$ (qui est en fait une fibration en droites), et le groupe  $\Aut(\mathbb{F}_n)$ pr\'eserve donc celle-ci. Ce groupe est isomorphe \`a  $\C^{n+1}\rtimes \GL(2,\C)/\mu_n$, o\`u $\C^{n+1}$ s'identifie \`a l'ensemble des polyn\^omes homog\`enes de degr\'e~$n$ en deux variables et o\`u $\GL(2,\C)$ agit naturellement sur les variables, avec comme noyau le sous-groupe $\mu_n$ des matrices \'egales \`a la matrice identit\'e multipli\'ee par une racine $n$-i\`eme de l'unit\'e. En coordonn\'ees affines, l'isomorphisme associe \`a \[\left(a_0 Y^n+a_1 XY^{n-1}+...+a_n X^n,\small{\left(\begin{array}{cc} a& b\\ c &d\end{array}\right)}\right)\in \C^{n+1}\rtimes \GL(2,\C)\] l'automorphisme
\[(x,y)\dasharrow \left(\frac{ax+b}{cx+d},\frac{y+a_0+a_1 x+...+a_n x^n}{(cx+d)^n}\right)\]
de la surface $\mathbb{F}_n$, vu comme \'el\'ement du groupe de de Jonqui\`eres.

Le groupe $\Aut(\mathbb{F}_n)$ est un sous-groupe alg\'ebrique connexe du groupe de Cremona. Si $n=1$, alors ce groupe n'est pas maximal vu qu'il est conjugu\'e \`a un sous-groupe de $\Aut(\Pn)$, via la contraction de la section exceptionnelle. Nous verrons (\`a la section~\ref{Sec:DemoThms}) que pour $n\geq 2$, le groupe $\Aut(\mathbb{F}_n)$ est maximal.

\subsection{Automorphismes de fibr\'es exceptionnels}\label{SubSec:AutFibExc}
Nous avons d\'efini en~\ref{Subsec:DefFibExc} ce qu'\'etait un fibr\'e exceptionnel. Nous rappelons bri\`evement quelques descriptions simples des fibr\'es exceptionnels (lemme~\ref{Lem:EquiFibExc}) et de leurs automorphismes (lemme~\ref{Lem:AutExcGDelta}); la plupart de ces r\'esultats se retrouvent \'egalement \`a partir de la partie 5.3 de \cite{bib:DoI}. 
\begin{lemm}\label{Lem:EquiFibExc}
Soit $\pi:S\rightarrow \mathbb{P}^1$ un  fibr\'e en coniques avec $2n$ fibres singuli\`eres. Alors, les conditions suivantes sont \'equivalentes:
\begin{enumerate}
\item
le fibr\'e est exceptionnel;
\item
il existe exactement deux sections $s_1$, $s_2$ d'auto-intersection~$-n$, qui sont disjointes; de plus chaque fibre exceptionnelle a une composante qui touche $s_1$ et l'autre qui touche $s_2$;
\item
il existe un morphisme birationnel $\eta_n:S\rightarrow \mathbb{F}_n$ de fibr\'es en coniques qui est l'\'eclatement de $2n$ points, tous appartenant \`a une m\^eme section d'auto-intersection~$n$;
\item
il existe un morphisme birationnel $\eta_0:S\rightarrow \mathbb{F}_0$ de fibr\'es en coniques qui est l'\'eclatement de $2n$ points, dont $n$ sont sur une section d'auto-intersection~$0$ et les $n$ autres sur une autre section d'auto-intersection~$0$.
\end{enumerate}
\end{lemm}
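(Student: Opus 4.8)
The plan is to prove the four conditions equivalent by systematically relating $(S,\pi)$ to a Hirzebruch surface: in each of the $2n$ singular fibres one contracts exactly one $(-1)$-component, obtaining a relatively minimal conic bundle over $\mathbb{P}^1$, hence some $\mathbb{F}_m$, and one tracks the self-intersections of the distinguished sections under this contraction. Throughout I use three standard facts. First, contracting one component in each singular fibre yields a birational morphism of conic bundles onto a Hirzebruch surface. Second, on $\mathbb{F}_m$ the negative section $s_-$ has self-intersection $-m$ and every other irreducible section has self-intersection $\geq m$. Third, a section is the closure of its restriction to the generic fibre, so two sections agreeing there coincide; in particular, since the contractions only modify singular fibres, $\eta$ is an isomorphism on a general fibre.

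The substantial implication is $(1)\Rightarrow(2)$. Assume $(S,\pi)$ is exceptional, with sections $s_1,s_2$ of self-intersection $-n$. As $s_1$ meets exactly one component of each singular fibre, I contract in every singular fibre the component \emph{not} meeting $s_1$, getting $\eta_1:S\to\mathbb{F}_m$ with exceptional curves $E_1,\dots,E_{2n}$. Since these curves are disjoint from $s_1$, the image $\eta_1(s_1)$ is again a section of self-intersection $-n$, which forces $m=n$ and $\eta_1(s_1)=s_-$. Writing $\eta_1(s_2)=s_-+bf$ and letting $j$ be the number of $E_i$ met by $s_2$, one has $\eta_1(s_2)^2=-n+j$. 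If $\eta_1(s_2)=s_-$ then $s_1$ and $s_2$ would agree on the general fibre, contradicting $s_1\neq s_2$; hence $b\geq n$ and $-n+j=-n+2b\geq n$, so $j=2n$, $b=n$ and $\eta_1(s_2)=s_+:=s_-+nf$. Thus $s_2$ meets all the $E_i$ and $s_1$ meets none, so in each singular fibre $s_1$ and $s_2$ touch different components, and $s_1\cdot s_2=\eta_1^*s_-\cdot(\eta_1^*s_+-\sum_i E_i)=s_-\cdot s_+=0$ shows they are disjoint. Applying the same computation to any section $s$ with $s^2=-n$ forces $\eta_1(s)\in\{s_-,s_+\}$ with the corresponding incidence, hence $s$ agrees with $s_1$ or $s_2$ on the general fibre and equals one of them; so there are exactly two such sections, establishing $(2)$.

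The remaining equivalences come from choosing which component to contract. For $(2)\Rightarrow(3)$, contract the component meeting $s_2$ in each singular fibre: then $\eta(s_1)$ keeps self-intersection $-n$, forcing the target to be $\mathbb{F}_n$ with $\eta(s_1)=s_-$, while $\eta(s_2)$ rises to $-n+2n=n$, so $\eta(s_2)=s_+$ and the $2n$ blown-up points all lie on this section of self-intersection $n$. Conversely $(3)\Rightarrow(2)$ is a direct computation: blowing up $2n$ points on $s_+\subset\mathbb{F}_n$ lowers that section to $-n$, leaves $s_-$ (disjoint from $s_+$) at $-n$, and creates in each fibre an exceptional curve meeting the transform of $s_+$ and a residual component meeting $s_-$; this is the incidence of $(2)$, and $(1)$ follows at once. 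For $(2)\Leftrightarrow(4)$ I argue symmetrically, contracting the component meeting $s_1$ in $n$ of the fibres and the component meeting $s_2$ in the remaining $n$: both sections then map to self-intersection $-n+n=0$, and a Hirzebruch surface carrying a section of self-intersection $0$ is $\mathbb{F}_0$, the two images being the required disjoint $0$-sections; the reverse implication is the analogous blow-up computation on $\mathbb{P}^1\times\mathbb{P}^1$.

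I expect $(1)\Rightarrow(2)$ to be the main obstacle, precisely because the definition of \emph{exceptional} postulates only the \emph{existence} of two sections of self-intersection $-n$, whereas $(2)$ asserts their disjointness, their exact incidence with the singular fibres, and their uniqueness. The contraction $\eta_1$ together with the saturated inequality $\eta_1(s_2)^2\geq n$, attained only by $s_+$, is what pins all of this down; the other implications are then essentially elementary-transformation bookkeeping.
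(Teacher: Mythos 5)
Your proof is correct and takes essentially the same route as the paper's: contract one component in each singular fibre to land on a Hirzebruch surface, then track the self-intersections and incidences of the sections to identify the target as $\mathbb{F}_n$ (resp. $\mathbb{F}_0$) and pin down the two $(-n)$-sections. The paper's own proof is a terse version of exactly this bookkeeping (it performs the contraction away from $s_1$ to get $\eta_n$, deduces (2) and (3), builds $\eta_0$ by the mixed contraction, and leaves the converses as direct verifications), so your write-up mainly supplies the details -- in particular the argument forcing $\eta(s_2)$ to be the $n$-section through all the blown-up points -- that the paper omits.
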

\begin{proof}
Supposons que le fibr\'e est exceptionnel et notons $s_1$ une des sections d'auto-intersection~$-n$ et $s_2$ l'autre. En contractant dans chaque fibre exceptionnelle la composante qui ne touche pas $s_1$ nous obtenons le morphisme $\eta_n$. Ceci d\'emontre en particulier l'assertion $(2)$ et l'assertion $(3)$. En contractant $n$ composantes qui touchent $s_1$ et $n$ composantes qui touchent $s_2$, nous trouvons le morphisme $\eta_0$ (et donc l'assertion $(4)$).
La d\'emonstration se termine en v\'erifiant que chacune des assertions $(2),(3),(4)$ implique directement que le fibr\'e est exceptionnel. 
\qed\end{proof}
\begin{coro}\label{Coro:IsoFibCExcp}
Deux fibr\'es en coniques exceptionnels sont isomorphes si et seulement les points de $\mathbb{P}^1$ sur lesquels ils sont singuliers sont les m\^emes, \`a automorphisme de $\mathbb{P}^1$ pr\`es.
\end{coro}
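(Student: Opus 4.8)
The plan is to reconstruct an exceptional conic bundle canonically from its singular locus $\Delta \subset \mathbb{P}^1$, using Lemme~\ref{Lem:EquiFibExc} together with the explicit description of $\Aut(\mathbb{F}_n)$ from Section~\ref{SubSec:AutHirz}. Throughout, ``isomorphe'' is understood as isomorphic as conic bundles, so that an isomorphism $(S,\pi)\to(S',\pi')$ is an isomorphism $\varphi:S\to S'$ for which there is $g\in\Aut(\mathbb{P}^1)$ with $\pi'\circ\varphi=g\circ\pi$.

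The ``only if'' direction is immediate. Such a $\varphi$ sends fibres to fibres and induces $g$ on the base; since it carries singular fibres to singular fibres, $g$ maps the $2n$ points of $\mathbb{P}^1$ over which $\pi$ is singular onto those over which $\pi'$ is singular. Hence the two singular loci agree up to $\Aut(\mathbb{P}^1)$.

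For the converse I would first put both bundles in a normal form over $\mathbb{F}_n$. By Lemme~\ref{Lem:EquiFibExc}(3) there is a birational morphism of conic bundles $\eta_n:S\to\mathbb{F}_n$ blowing up $2n$ points lying on a single section $C$ of self-intersection $n$. As $\eta_n$ commutes with the fibrations, these points project exactly onto the singular locus $\Delta$ of $\pi$, and as $C$ is a section it maps isomorphically to the base, so via $C$ the $2n$ points are identified with $\Delta\subset\mathbb{P}^1$. The key structural input is that, for $n\geq 1$, $\Aut(\mathbb{F}_n)$ acts transitively on the sections of self-intersection $n$: these are exactly the sections disjoint from the unique negative section $E$, they form a family parametrized by $\mathbb{C}^{n+1}$, and the unipotent factor $\mathbb{C}^{n+1}\subset\Aut(\mathbb{F}_n)$ of Section~\ref{SubSec:AutHirz} acts on them by translation, transitively. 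Applying such a translation (which preserves every fibre, hence does not move the projection to the base) I may assume $C$ is the standard section $\{y=0\}$ in the affine coordinates of Section~\ref{SubSec:AutHirz}. Doing the same for $(S',\pi')$, I get that $(S,\pi)$ and $(S',\pi')$ are, respectively, the blow-ups of the $2n$ points of the fixed section $C$ lying over $\Delta$ and over $\Delta'$.

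It then remains to lift a base automorphism. The subgroup of $\Aut(\mathbb{F}_n)$ fixing $C$ contains the factor $\GL(2,\mathbb{C})/\mu_n$, which in these coordinates acts by $(x,y)\mapsto\big(\tfrac{ax+b}{cx+d},\tfrac{y}{(cx+d)^n}\big)$; it preserves $\{y=0\}=C$ and acts on $C\cong\mathbb{P}^1$ through the full $\PGL(2,\mathbb{C})=\Aut(\mathbb{P}^1)$ on the base. Hence any $g\in\Aut(\mathbb{P}^1)$ with $g(\Delta)=\Delta'$ lifts to some $\tilde g\in\Aut(\mathbb{F}_n)$ fixing $C$ and carrying the $2n$ points of $C$ over $\Delta$ to those over $\Delta'$. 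Such a $\tilde g$ conjugates the blow-up centre of $S$ to that of $S'$, so it lifts to an isomorphism $S\to S'$ of conic bundles, which settles the ``if'' direction. I expect the main obstacle to be the rigorous justification that $\Aut(\mathbb{F}_n)$ is transitive on the sections of self-intersection $n$ and that the stabiliser of one such section surjects onto $\Aut(\mathbb{P}^1)$; once this and Lemme~\ref{Lem:EquiFibExc} are in hand, the rest is bookkeeping. A minor point is that $\eta_n$ involves a choice (contracting the components meeting one of the two $(-n)$-sections rather than the other), but either choice yields the same normal form, so it does not affect the argument.
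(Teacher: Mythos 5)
Your proof is correct and takes essentially the same route as the paper: the paper's (very terse) proof likewise reduces to $\mathbb{F}_n$ via the morphism $\eta_n$ of Lemme~\ref{Lem:EquiFibExc} and lifts an automorphism of $\mathbb{P}^1$, viewed as an automorphism of the section of self-intersection~$n$, to an automorphism of $\mathbb{F}_n$ and then, through $\eta_n$, to an isomorphism of conic bundles. The extra details you supply (transitivity of the translation group $\mathbb{C}^{n+1}\subset\Aut(\mathbb{F}_n)$ on the sections of self-intersection~$n$, and the fact that the stabiliser of such a section surjects onto $\Aut(\mathbb{P}^1)$) only make explicit what the paper leaves implicit.
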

\begin{proof}
Il s'agit de montrer qu'un automorphisme de $\mathbb{P}^1$ se rel\`eve \`a un automorphismes de fibr\'es en coniques. Utilisons le morphisme $\eta_n$ du lemme~\ref{Lem:EquiFibExc}. Un automorphisme de $\mathbb{P}^1$ peut \^etre vu comme un automorphisme de la section de $\mathbb{F}_n$ d'auto-intersection~$n$ et s'\'etend \`a un automorphisme de $\mathbb{F}_n$, qui se rel\`eve, via $\eta_n$ \`a un automorphisme du fibr\'e en coniques.
\qed\end{proof}
Le lemme suivant est une cons\'equence de \cite[Proposition 5.3]{bib:DoI}.
\begin{lemm}\label{Lem:AutExcGDelta}
Soit $(S,\pi)$ un fibr\'e en coniques exceptionnel et notons $\Delta\subset \mathbb{P}^1$ l'ensemble des points qui ont une fibre singuli\`ere. Alors, les assertions suivantes sont satisfaites:
\begin{enumerate}
\item
l'action de $\Aut(S,\pi)$ sur la base de la fibration donne lieu a une suite exacte 
\[1\rightarrow \Aut(S/\mathbb{P}^1)\rightarrow \Aut(S,\pi) \rightarrow H_{\Delta} \rightarrow 1,\]
o\`u $H_{\Delta}$ est le sous-groupe de $\PGL(2,\C)$ constitu\'e des \'el\'ements qui pr\'eservent l'ensemble $\Delta$ et $\Aut(S/\mathbb{P}^1)\cong \C^{*}\rtimes\Z{2}$.
\item
Chaque \'el\'ement non-trivial de $\C^{*}\subset \Aut(S/\mathbb{P}^1)$ fixe (point par point) deux courbes rationnelles et chaque \'el\'ement de $\Aut(S/\mathbb{P}^1)\backslash \C^{*}$ est une involution qui fixe une courbe irr\'eductible, rev\^etement double de $\mathbb{P}^1$, via $\pi$, ramifi\'ee aux points de~$\Delta$.
\item
Le groupe $\C^{*}\subset \Aut(S/\mathbb{P}^1)$ agit trivialement sur $\Pic{S}$.
\item
Pour toute involution $\sigma \in \Aut(S/\mathbb{P}^1)\backslash \C^{*}$, on a \[\rkPic{S}^{\sigma}=\rkPic{S}^{\Aut(S,\pi)}=2\].
 \end{enumerate}
\end{lemm}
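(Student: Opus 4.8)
L'\'etape d\'elicate \`a pr\'eparer est la description de $\Aut(S/\mathbb{P}^1)$; le plan est de traiter s\'epar\'ement l'action de $\Aut(S,\pi)$ sur la base $\mathbb{P}^1$ et le noyau de cette action, puis d'en d\'eduire les courbes fix\'ees et l'action sur $\Pic{S}$. On s'appuie sur le lemme~\ref{Lem:EquiFibExc} et sur \cite[Proposition 5.3]{bib:DoI}. Tout \'el\'ement de $\Aut(S,\pi)$ permute les fibres singuli\`eres et pr\'eserve donc l'ensemble $\Delta$; l'action sur la base fournit ainsi un morphisme $\Aut(S,\pi)\rightarrow H_{\Delta}$ de noyau $\Aut(S/\mathbb{P}^1)$, ce qui \'etablit la suite exacte de $(1)$. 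La surjectivit\'e provient du corollaire~\ref{Coro:IsoFibCExcp}: comme tout \'el\'ement de $H_{\Delta}$ pr\'eserve $\Delta$, il se rel\`eve en un automorphisme du fibr\'e en coniques.

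L'\'etape centrale est l'identification $\Aut(S/\mathbb{P}^1)\cong \C^{*}\rtimes \Z{2}$. D'apr\`es le lemme~\ref{Lem:EquiFibExc}, $S$ poss\`ede exactement deux sections disjointes $s_1,s_2$ d'auto-intersection~$-n$; tout \'el\'ement de $\Aut(S/\mathbb{P}^1)$ conservant l'auto-intersection et envoyant sections sur sections, il pr\'eserve la paire $\{s_1,s_2\}$. En identifiant la fibre g\'en\'erique \`a $\mathbb{P}^1$ sur $K=\C(x)$, avec $s_1,s_2$ plac\'es en $0$ et $\infty$, on plonge $\Aut(S/\mathbb{P}^1)$ dans le stabilisateur de $\{0,\infty\}$ dans $\PGL(2,K)$, \`a savoir $K^{*}\rtimes \Z{2}$, o\`u $K^{*}$ agit par $y\mapsto \lambda y$ et l'\'el\'ement non trivial par $y\mapsto 1/y$. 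On montre ensuite que seules les homoth\'eties constantes $\lambda\in\C^{*}$ se prolongent en automorphismes de $S$: une fonction $\lambda$ non constante aurait un z\'ero ou un p\^ole en un point $c$, ce qui contracterait la fibre au-dessus de $c$ (sur une fibre lisse si $c\notin \Delta$, ou en un point lisse de $s_1\cup s_2$ au-dessus de $c$ si $c\in\Delta$), contredisant la bir\'egularit\'e. Pour l'involution \'echangeant $s_1$ et $s_2$, on utilise le mod\`ele $\eta_0:S\rightarrow \mathbb{F}_0=\mathbb{P}^1\times\mathbb{P}^1$ du lemme~\ref{Lem:EquiFibExc}, o\`u l'on \'eclate $n$ points au-dessus de $\Delta_1$ situ\'es sur $\{y=0\}$ et $n$ points au-dessus de $\Delta_2$ situ\'es sur $\{y=\infty\}$, avec $\Delta=\Delta_1\sqcup \Delta_2$; en prenant $\mu\in\C(x)$ de diviseur $\sum_{b\in \Delta_2}(b)-\sum_{a\in\Delta_1}(a)$, l'application birationnelle $(x,y)\mapsto (x,\mu(x)/y)$ a pour points-base exactement les $2n$ points \'eclat\'es et se rel\`eve donc en une involution $\sigma$. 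Comme le compos\'e de deux \'el\'ements \'echangeant $s_1,s_2$ fixe ces deux sections, la classe des \'el\'ements \'echangeant les sections est le translat\'e $\C^{*}\sigma$, et l'on obtient $\Aut(S/\mathbb{P}^1)=\C^{*}\cup\C^{*}\sigma\cong \C^{*}\rtimes \Z{2}$ (l'involution agissant par $\lambda\mapsto\lambda^{-1}$). C'est cette construction de l'involution et la v\'erification de sa bir\'egularit\'e qui constituent le principal obstacle.

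Les points $(2)$ et $(3)$ s'en d\'eduisent. Une homoth\'etie $y\mapsto \lambda y$ non triviale fixe pr\'ecis\'ement $\{y=0\}$ et $\{y=\infty\}$, soit les deux courbes rationnelles $s_1$ et $s_2$; un \'el\'ement $y\mapsto \lambda\mu(x)/y$ de $\C^{*}\sigma$ fixe la courbe d'\'equation $y^2=\lambda\mu(x)$, rev\^etement double de $\mathbb{P}^1$ via $\pi$, irr\'eductible puisque $\mu$ n'est pas un carr\'e (ses z\'eros sont simples), et ramifi\'e exactement au-dessus des z\'eros et p\^oles de $\mu$, c'est-\`a-dire au-dessus de $\Delta$. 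Quant \`a $(3)$, le groupe $\C^{*}$ \'etant connexe, il agit trivialement sur le r\'eseau $\Pic{S}$.

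Pour $(4)$, on calcule l'action de $\sigma$ sur $\Pic{S}$ dans la base $h_1=f$, $h_2$, $E_1,\dots,E_{2n}$ issue de $\eta_0$: puisque $\sigma$ \'echange dans chaque fibre singuli\`ere ses deux composantes, on a $\sigma(E_i)=f-E_i$, et $\sigma$ \'echange $s_1$ et $s_2$. Un calcul direct donne alors $\mathrm{tr}(\sigma)=2-2n$ sur $\Pic{S}\otimes\mathbb{Q}$, d'o\`u $\rkPic{S}^{\sigma}=\tfrac{1}{2}\big((2+2n)+(2-2n)\big)=2$; le sous-espace invariant est d'ailleurs engendr\'e par $f$ et $s_1+s_2$, donc aussi par $f$ et $K_S$. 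Enfin, $K_S$ et $f$ \'etant invariants et ind\'ependants ($K_S\cdot f=-2$ et $f^2=0$), on a $\rkPic{S}^{\Aut(S,\pi)}\geq 2$; et puisque $\sigma\in\Aut(S,\pi)$, on a $\Pic{S}^{\Aut(S,\pi)}\subset \Pic{S}^{\sigma}$, ce qui donne $\rkPic{S}^{\Aut(S,\pi)}=\rkPic{S}^{\sigma}=2$, le lemme~\ref{Lem:ZKZf} identifiant de plus ce groupe \`a $\mathbb{Z}K_S\oplus\mathbb{Z}f$.
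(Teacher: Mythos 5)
Your proof follows the paper's own route almost step for step: the exact sequence comes from the action on the base, with surjectivity obtained by lifting an element of $H_{\Delta}$ through the $\mathbb{F}_n$-model (this is exactly the content of the proof of Corollaire~\ref{Coro:IsoFibCExcp}); $\Aut(S/\mathbb{P}^1)$ is identified with $\C^{*}\rtimes\Z{2}$ via its action on the pair of $(-n)$-sections; the $\Z{2}$-part is realized by an explicit de Jonqui\`eres involution $y\mapsto\mu(x)/y$ on $\mathbb{P}^1\times\mathbb{P}^1$ whose base points are the blown-up points (this is the paper's $\varphi$); and assertion (4) is the same eigenvalue count, packaged via the trace instead of the paper's explicit eigenbasis $K_S,f,f-2E_1,\dots,f-2E_{2n}$.

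There is, however, a sign error at the very step you single out as the main obstacle, and as written that step fails. Writing $\mu=P/Q$ with $P,Q$ coprime, the indeterminacy locus of $((x_1:x_2),(y_1:y_2))\dasharrow((x_1:x_2),(Py_2:Qy_1))$ is $\{P=y_1=0\}\cup\{Q=y_2=0\}$, i.e. the points $(\text{zero of }\mu,\,y=0)$ and $(\text{pole of }\mu,\,y=\infty)$. With your choice $\mathrm{div}(\mu)=\sum_{b\in\Delta_2}(b)-\sum_{a\in\Delta_1}(a)$, these base points form $(\Delta_2\times\{0\})\cup(\Delta_1\times\{\infty\})$, which are precisely the $2n$ points that are \emph{not} blown up (the blown-up points being $(\Delta_1\times\{0\})\cup(\Delta_2\times\{\infty\})$); the map therefore does not lift to an automorphism of $S$ --- for instance it contracts the strict transform of the fiber over $a\in\Delta_1$ onto the point lying above $(a,\infty)$, which is not an exceptional curve of $\eta_0$. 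The fix is immediate: take $\mathrm{div}(\mu)=\sum_{a\in\Delta_1}(a)-\sum_{b\in\Delta_2}(b)$, i.e. replace $\mu$ by $1/\mu$, which recovers the paper's $\prod_{i\leq n}m_i/\prod_{i>n}m_i$. Nothing downstream is affected by this correction: the fixed curves $y^2=\lambda\mu(x)$ are still ramified exactly over the zeros and poles of $\mu$, i.e. over $\Delta$, and the trace computation only uses $\sigma(f)=f$ and $\sigma(E_i)=f-E_i$.
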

\begin{proof}
Soit $\Delta \subset \mathbb{P}^1$ l'ensemble des points ayant une fibre singuli\`ere et soit $2n$ le nombre de points de $\Delta$. Notons \'egalement $s_1$ et $s_2$ les deux sections d'auto-intersection~$-n$ et 
$\eta_n:S\rightarrow \mathbb{F}_n$, $\eta_0:S\rightarrow \mathbb{F}_0$ les morphismes donn\'es par le lemme~\ref{Lem:EquiFibExc}.
Le groupe $\Aut(S,\pi)$ agit sur l'ensemble $\{s_1,s_2\}$ et donne donc lieu \`a une suite exacte 
\[1\rightarrow H \rightarrow \Aut(S,\pi) \rightarrow W\rightarrow 1,\]
o\`u $W\subset \Z{2}$. De plus, pour $i=0$ et $i=n$ le groupe $H$ est conjugu\'e par $\eta_i$ au groupe des automorphismes de $\mathbb{F}_i$ qui fixent les points \'eclat\'es par $\eta_i$.

Montrons que $W=\Z{2}$. En \'ecrivant $\Delta=\{p_1\}_{i=1}^{2n}$, nous pouvons supposer que les points \'eclat\'es par le morphisme $\eta_0:S\rightarrow \mathbb{P}^1\times\mathbb{P}^1$ sont $\{(p_i,(0:1))\}_{i=1}^n,\{(p_i,(1:0))\}_{i=1}^{2n}$. 
Choisissons pour $i=1,...,2n$ une forme homog\`ene de degr\'e~$1$ s'annulant en $p_i$, que l'on note $m_i$, et d\'efinissons alors l'application birationnelle de $\mathbb{P}^1\times\mathbb{P}^1$ suivante:
\[\varphi:\left((x_1:x_2),(y_1:y_2)\right)\dasharrow \left((x_1:x_2),(y_2\prod_{i=1}^n m_{i}(x_1,x_2):y_1\prod_{i=n+1}^{2n} m_{i}(x_1,x_2))\right).\]
Les points bases de $\varphi$ et de son inverse (qui est $\varphi$ elle-m\^eme) \'etant exactement les points \'eclat\'es par $\eta_0$, $\varphi$ se rel\`eve \`a une involution $\eta_0^{-1}\varphi\eta_0\in \Aut(S)$ qui pr\'eserve la fibration, \'echange $s_1$ sur $s_2$ et \'echange donc les composantes de chaque fibre singuli\`ere. Ceci d\'emontre que $W=\Z{2}$ et que $\Aut(S,\pi)=H\rtimes <\varphi>$.

Observons que le noyau de l'action de $H$ sur la base la fibration est le relev\'e du tore de $\Aut(\mathbb{P}^1\times\mathbb{P}^1)$ constitu\'e des automorphismes de la forme
\[\left((x_1:x_2),(y_1:y_2)\right)\mapsto \left((x_1:x_2),(y_1:\alpha y_2)\right), \alpha \in \C^{*}.\]
Le groupe $\Aut(S/\mathbb{P}^1)$  est donc le groupe isomorphe \`a $\C^{*}\rtimes \Z{2}$, engendr\'e par ce tore et par $\varphi$. Les \'el\'ements non-triviaux du tore fixent deux courbes rationnelles et les autres \'el\'ements non triviaux de $\Aut(S/\mathbb{P}^1)$ fixent des courbes irr\'eductibles, qui sont des rev\^etements double de $\mathbb{P}^1$, via $\pi$, ramifi\'es le long des points de $\Delta$. Le tore laisse chaque composante de chaque fibre singuli\`ere invariante et donc agit trivialement sur $\Pic{S}$.

Le groupe $\Aut(S,\pi)$ agissant sur l'ensemble des fibres exceptionnelles, l'image de l'action de $\Aut(S,\pi)$ sur la base de la fibration est contenue dans le groupe $H_{\Delta}$ des automorphismes de $\mathbb{P}^1$ qui pr\'eservent l'ensemble $\Delta$.
R\'eciproquement, tout \'el\'ement de ce type peut se voir comme un automorphisme de la section de $\mathbb{F}_n$ qui contient les $2n$ points \'eclat\'es par $\eta_n$. Un tel automorphisme s'\'etend \`a un automorphisme de $\mathbb{F}_n$ et se rel\`eve donc \`a un \'el\'ement de $H$.

Il reste \`a prouver la derni\`ere assertion. Le groupe $\Pic{S}$ est engendr\'e par le diviseur canonique $K_S$, le diviseur $f$ d'une fibre et les diviseurs $E_1,...,E_{2n}$ des courbes contract\'ees par $\eta_n$. Observons que $\sigma$ \'echange les composantes de chaque fibre singuli\`ere et donc envoie $E_i$ sur $f-E_i$. La matrice de cet automorphisme relativement \`a la base $K_S,f,f-2E_1,...,f-2E_{2n}$ est donc diagonale, avec deux valeurs propres \'egales \`a $1$ et les autres \`a $-1$. Ceci montre que la partie de $\Pic{S}$ invariante par $\sigma$ (et donc par $\Aut(S,\pi)$) est de rang $2$, engendr\'e par $f$ et $K_S$.
\qed\end{proof}

\begin{prop}\label{Prp:AutSpiAutSfibE}
Soit $(S,\pi)$ un fibr\'e exceptionnel. Le groupe $\Aut(S)$ est un sous-groupe alg\'ebrique du groupe de Cremona. De plus, $\Aut(S)=\Aut(S,\pi)$ si et seulement si le nombre de fibres singuli\`eres de $\pi$ est plus grand ou \'egal \`a $4$.
\end{prop}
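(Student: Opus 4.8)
The plan is to reduce everything to the action of $\Aut(S)$ on $\Pic(S)$ and to the two distinguished sections produced by Lemme~\ref{Lem:EquiFibExc}. Write $2n$ for the number of singular fibres, so that the hypothesis ``au moins $4$ fibres singuli\`eres'' reads $n\geq 2$; let $s_1,s_2$ be the two sections of self-intersection $-n$ and let $f$ be the class of a fibre of $\pi$. The first ingredient is an algebraic identity in $\Pic(S)$. Using the birational morphism $\eta_0:S\to\mathbb{F}_0$ of Lemme~\ref{Lem:EquiFibExc}(4), a direct computation gives
\[2f=-K_S-[s_1]-[s_2].\]
One verifies this by intersecting both sides with $f$, $s_1$, $s_2$ and $K_S$, using $s_i^2=-n$, $s_1\cdot s_2=0$, $s_i\cdot f=1$ and $s_i\cdot K_S=n-2$ (the last by adjunction). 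Since $\Pic(S)$ is torsion-free, the class $f$ is thereby determined by $K_S$ and $[s_1]+[s_2]$.

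The heart of the matter is to show that for $n\geq 2$ the sections $s_1,s_2$ are exactly the irreducible curves of minimal self-intersection $-n$, every other irreducible curve having strictly larger self-intersection. An irreducible curve $C$ is either contained in a fibre (then it is one of the $4n$ components, with $C^2=-1$) or it dominates $\mathbb{P}^1$. In the latter case I would push it down by $\eta_n:S\to\mathbb{F}_n$ (Lemme~\ref{Lem:EquiFibExc}(3)); its image has class $a C_0+bf$ with $a=C\cdot f\geq 1$. For a section ($a=1$) the image is either the negative section $C_0$ (which gives back $s_1$), or the section $C_+$ carrying the $2n$ blown-up points (which gives back $s_2$), or a positive section of class $C_0+bf$ with $b\geq n$; in this last case, since $C_+$ contains all $2n$ centres of $\eta_n$ and meets the image in only $b$ points, the strict transform has self-intersection $\geq -n+2b-b=-n+b\geq 0$. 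A similar estimate for multisections ($a\geq 2$), bounding $\sum\mathrm{mult}^2$ by the intersection number with $C_+$, yields self-intersection $\geq 0$. Hence the value $-n$ is attained only at $s_1,s_2$; as $n\geq 2$ this value is $\leq -2$, so it is intrinsic to $S$ and $\Aut(S)$ permutes the pair $\{s_1,s_2\}$.

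Combining the two steps, any $g\in\Aut(S)$ fixes $K_S$ and $[s_1]+[s_2]$, hence fixes $f$ by the displayed identity, i.e. $g$ preserves the fibration. Thus $\Aut(S)=\Aut(S,\pi)$ when $n\geq 2$, and in particular $\Aut(S)$ is algebraic by Proposition~\ref{Prp:AutFibMestAlgebrique}. For $n=1$ (two singular fibres) the morphism $\eta_0$ exhibits $S$ as the blow-up of $\mathbb{P}^1\times\mathbb{P}^1$ at two points in general position, that is, the del Pezzo surface $S_6$; here $\Aut(S)$ is again algebraic by Proposition~\ref{Prp:AutFibMestAlgebrique}, which settles the first assertion in every case. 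Moreover $\Aut(S_6)$ permutes the three conic bundles of $S_6$ (Section~\ref{SubSec:dPezzo6}), so $\Aut(S,\pi)\subsetneq\Aut(S)$ in that case, giving the converse direction of the equivalence. The main obstacle is the curve-counting of the second step: one must rule out any unexpected negative section or multisection competing with $s_1,s_2$, and the cleanest route is the reduction to $\mathbb{F}_n$ sketched above, where the constraint that all centres of $\eta_n$ lie on the single section $C_+$ is exactly what forces the self-intersections of all other curves up.
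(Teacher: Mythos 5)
Your proof is correct, but it takes a genuinely different route from the paper's. The paper proves algebraicity of $\Aut(S)$ first and independently: the kernel of the action of $\Aut(S)$ on $\Pic{S}$ contains a copy of $\C^{*}$ (lemme~\ref{Lem:AutExcGDelta}), which by \cite{bib:Har} forces the action on $\Pic{S}$ to be finite, hence $\Aut(S)$ algebraic. For the equivalence, the paper argues lattice-theoretically: if $\Aut(S)\supsetneq\Aut(S,\pi)$, the equality $\rkPic{S}^{\Aut(S,\pi)}=2$ forces $\rkPic{S}^{\Aut(S)}=1$, an averaging argument then shows that $S$ is a del Pezzo surface, which is incompatible with the sections of self-intersection $-n\leq -2$, so there are at most $2$ singular fibres. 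You instead work directly with curve geometry: pushing curves down by $\eta_n:S\rightarrow\mathbb{F}_n$ (lemme~\ref{Lem:EquiFibExc}), you show that for $n\geq 2$ the only irreducible curves of self-intersection $\leq -2$ on $S$ are $s_1,s_2$, so $\Aut(S)$ preserves $[s_1]+[s_2]$ and $K_S$, hence the fibre class via $2f=-K_S-[s_1]-[s_2]$, giving $\Aut(S)=\Aut(S,\pi)$ outright; algebraicity then comes for free from proposition~\ref{Prp:AutFibMestAlgebrique} in both cases ($n\geq 2$: conic bundle; $n=1$: the del Pezzo surface of degree~$6$), with no appeal to \cite{bib:Har}. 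Both treatments of the case of two singular fibres coincide ($S\cong S_6$ and $\Aut(S_6)$ permutes its three conic bundles, see~\ref{SubSec:dPezzo6}). What your approach buys is self-containedness and a stronger geometric output (for $n\geq 2$, the fibre class -- hence the conic bundle structure -- is canonically attached to $S$, so it is preserved by \emph{every} automorphism); what the paper's approach buys is brevity, since lemme~\ref{Lem:AutExcGDelta} is already available, and it avoids any classification of negative curves.

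One caveat: your proposed ``verification'' of the identity $2f=-K_S-[s_1]-[s_2]$ by intersecting both sides with $f$, $s_1$, $s_2$ and $K_S$ is not conclusive, because these four classes do not span $\Pic{S}\otimes\mathbb{Q}$, which has rank $2n+2$; a class orthogonal to all four need not be zero. The identity is nevertheless correct, and your primary justification -- the direct pullback computation through $\eta_0$ (or through $\eta_n$, writing $K_S=\eta_n^{*}K_{\mathbb{F}_n}+\sum E_i$ and expressing $s_1,s_2,f$ as pullbacks minus exceptional classes) -- does establish it. So this is a presentational slip rather than a gap, but the intersection-number check should be replaced by the pullback computation.
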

\begin{proof}
L'alg\'ebricit\'e du groupe $\Aut(S,\pi)$ suit de la proposition~\ref{Prp:AutFibMestAlgebrique}. De plus, le noyau de l'action de $\Aut(S)$ sur $\Pic{S}$ contient un groupe isomorphe \`a $\C^{*}$ (lemme~\ref{Lem:AutExcGDelta}). Ceci implique \cite{bib:Har} que le groupe $\Aut(S)$ agit de mani\`ere finie sur $\Pic{S}$ et est donc un sous-groupe alg\'ebrique du groupe de Cremona.

Si le nombre de fibres singuli\`eres est $2$, alors d'apr\`es le lemme~\ref{Lem:EquiFibExc}, la surface $S$ est obtenue en \'eclatant deux points g\'en\'eraux de $\mathbb{P}^1\times\mathbb{P}^1$ et est donc la surface de del Pezzo de degr\'e~$6$. Ceci montre que $\Aut(S)$ est strictement plus grand que $\Aut(S,\pi)$ (voir paragraphe~\ref{SubSec:dPezzo6}).

R\'eciproquement, si le groupe $\Aut(S)$ est strictement plus grand que $\Aut(S,\pi)$, alors l'\'egalit\'e $\rkPic{S}^{\Aut(S,\pi)}=2$ (lemme~\ref{Lem:AutExcGDelta}) implique que $\rkPic{S}^{\Aut(S)}=1$. Ceci montre que $S$ est une surface de del Pezzo (en utilisant un argument de moyenne) et donc que le nombre de fibres singuli\`eres est au plus $2$ (sinon il existerait une courbe d'auto-intersection~$-n$ avec $n\geq 2$). 
\qed\end{proof}

Nous finissons cette pr\'esentation des fibr\'es exceptionnels par un lemme technique sur ceux-ci, qui reprend \cite[Proposition 5.2]{bib:DoI} (ce dernier r\'esultat est prouv\'e pour un groupe fini, mais la preuve s'ajuste naturellement au cas g\'en\'eral).

\begin{lemm}\label{Lem:ContenuDansExc} 
Soit $(S,\pi)$ un fibr\'e en coniques avec au moins une fibre singuli\`ere dont les composantes sont permut\'ees par un \'el\'ement de $\Aut(S,\pi)$.

Si $\Aut(S/\mathbb{P}^1)$ contient un \'el\'ement non trivial qui agit trivialement sur $\Pic{S}$ alors $S$ est un fibr\'e exceptionnel. Sinon, $\Aut(S/\mathbb{P}^1)$ est isomorphe \`a $(\Z{2})^r$ pour $r=0,1,2$.
\end{lemm}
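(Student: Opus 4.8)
The plan is to study $\Aut(S/\mathbb{P}^1)$ through its action on the generic fibre of $\pi$. As $S$ is rational and the Brauer group of $\C(x)=\C(\mathbb{P}^1)$ is trivial (Tsen), this generic fibre is isomorphic to $\mathbb{P}^1$ over $\C(x)$, so restriction yields an injection $\Aut(S/\mathbb{P}^1)\hookrightarrow \PGL(2,\C(x))$. Let $N$ denote the subgroup of elements acting trivially on $\Pic{S}$. Any $g\in \Aut(S/\mathbb{P}^1)$ fixes the classes of $K_S$ and of a fibre and can only exchange the two components inside each singular fibre; hence the action on $\Pic{S}$ is recorded by a homomorphism $\Aut(S/\mathbb{P}^1)\to (\Z{2})^m$, where $m$ is the number of singular fibres, whose kernel is exactly $N$. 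The statement is then the dichotomy ``$N\neq 1$'' versus ``$N=1$''.

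I would first dispose of the case $N=1$. Here $\Aut(S/\mathbb{P}^1)$ injects into $(\Z{2})^m$, so it is an elementary abelian $2$-group $(\Z{2})^r$; in particular each of its nontrivial elements is an involution. Since the group also sits inside $\PGL(2,\C(x))\subset \PGL(2,\overline{\C(x)})$, and the largest elementary abelian $2$-subgroup of $\PGL(2)$ is the Klein four-group (the normaliser of a torus contains no $(\Z{2})^3$), we obtain $r\leq 2$, which is the desired conclusion.

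For $N\neq 1$ I would choose a nontrivial $g\in N$ together with a birational morphism of conic bundles $\eta:S\to \mathbb{F}_m$ contracting one component in each singular fibre. As $g$ fixes every component (it lies in $N$), the morphism $\eta$ is $g$-equivariant and $g$ descends to a nontrivial $\bar g\in\Aut(\mathbb{F}_m/\mathbb{P}^1)$. Since $\bar g$ preserves every fibre, it fixes each of the pairwise distinct points blown up by $\eta$; as $\bar g$ is either semisimple (fixing two disjoint sections of $\mathbb{F}_m$) or unipotent (fixing one), all these points, hence all components of the singular fibres of $S$, are attached to at most two sections $s_1,s_2$. This is already the shape described in Lemma~\ref{Lem:EquiFibExc}.

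The hard part is to force the exceptional balance, and this is exactly where the hypothesis is indispensable: the surface $\mathbb{F}_m$ itself satisfies $N\neq 1$ yet is not an exceptional bundle, so one must exploit the presence of a singular fibre $F_0$ whose components are swapped by some $h\in\Aut(S,\pi)$. Such an $h$ interchanges the two sections meeting the two components of $F_0$, and since $h$ preserves self-intersections these two sections must have equal self-intersection. Combining this with the count of singular fibres forces the configuration of two disjoint sections of self-intersection $-n$ over $2n$ singular fibres, that is, $S$ is exceptional by Lemma~\ref{Lem:EquiFibExc}. I expect the careful bookkeeping of self-intersections, and the verification that $h$ genuinely interchanges $s_1$ and $s_2$ rather than producing a third section, to be the only delicate point; it is carried out in the finite-group setting in \cite[Proposition 5.2]{bib:DoI}, and the argument transposes to our algebraic $\Aut(S,\pi)$.
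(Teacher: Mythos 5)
Your strategy is the same as the paper's: the dichotomy on the kernel $N$ of the action on $\Pic{S}$, the exponent-two argument inside $\PGL(2,\C(x))$ when $N=\{1\}$ (this half is complete and is exactly the paper's argument), and the equivariant contraction to a Hirzebruch surface when $N\neq\{1\}$. In the second case, however, your proof has a genuine gap, in fact two. The pivotal claim that $h$ (the element exchanging the components of $F_0$) ``interchanges the two sections'' is unjustified as written: you defined $s_1,s_2$ as the curves fixed by \emph{one chosen element} $g\in N$, and $h$ maps the fixed locus of $g$ onto the fixed locus of $hgh^{-1}$, which is a possibly different element of $N$ with possibly different fixed sections. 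The paper avoids this by working with the whole group $G$ of elements acting trivially on $\Pic{S}$: being a kernel, $G$ is normal in $\Aut(S,\pi)$, so the set of sections fixed by $G$ is $\Aut(S,\pi)$-invariant, and only then does the swapping element force the existence of exactly two disjoint sections, exchanged by $h$, hence of equal self-intersection $-n$ (one invariant section is impossible, since it would have to meet both components of $F_0$).

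Second, the ``exceptional balance'' that you defer to \cite[Proposition 5.2]{bib:DoI} is precisely the content of the lemma, so it has to be proved rather than transposed; it is also not mere bookkeeping. What is missing is: (i) no component $E$ of a singular fibre can meet both $s_1$ and $s_2$ --- otherwise a nontrivial element of $G$ would fix three distinct points of $E$ (the singular point of the fibre and $E\cap s_1$, $E\cap s_2$), hence fix $E$ pointwise, and its fixed locus would contain the two curves $E$ and $s_1$ meeting at a point, which is impossible (fixed loci of nontrivial finite-order automorphisms are smooth, and the infinite-order case reduces to this by taking the Zariski closure of the generated group, which either contains torsion or is unipotent and then fixes only one point of the generic fibre); so every singular fibre has one component along $s_1$ and the other along $s_2$; (ii) contracting the $k$ components meeting $s_1$ then yields a Hirzebruch surface containing the disjoint images of $s_1$ and $s_2$, of self-intersections $-n+k$ and $-n$; since two disjoint sections of $\mathbb{F}_{n'}$ have self-intersections $-n'$ and $n'$, this forces $n'=n$ and $k=2n$, i.e.\ exactly the morphism $\eta_n$ of Lemma~\ref{Lem:EquiFibExc}, whence $(S,\pi)$ is exceptional. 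A smaller inaccuracy along the way: a unipotent $\bar g\in\Aut(\mathbb{F}_m/\mathbb{P}^1)$ fixes one section \emph{plus possibly entire fibres}, so the blown-up points need not lie on fixed sections; this case is in the end excluded by the swapping element, but your parenthetical dichotomy does not cover it.
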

\begin{proof}
Soit $G\subset \Aut(S/\mathbb{P}^1)$ le noyau de l'action de $\Aut(S,\pi)$ sur $\Pic{S}$.  En contractant une composante dans chaque fibre singuli\`ere, nous obtenons un morphisme birationnel de fibr\'es en coniques $G$-\'equivariant $\eta:S\rightarrow \mathbb{F}_r$, pour un certain $r\geq 0$. De plus  $R=\eta G \eta^{-1}\subset \Aut(\mathbb{F}_r)$ fixe tous les points \'eclat\'es par $\eta$. Si $R\subset \PGL(2,\C(x))$ n'est pas trivial, alors  il fixe un ou deux points sur chaque fibre de $\mathbb{F}_n$ et fixe donc une ou deux sections de $\mathbb{F}_r$. Le relev\'e de l'ensemble de ces sections sur $S$ est invariant par $\Aut(S,\pi)$. Le fait qu'un \'el\'ement de $\Aut(S,\pi)$ \'echange les composantes d'une fibre singuli\`ere implique qu'il y a exactement 2 sections $s_1$ et $s_2$ disjointes, fix\'ees par $G$ et \'echang\'ees par un \'el\'ement de $\Aut(S,\pi)$; elles ont donc la m\^eme auto-intersection~$-n$. En contractant dans chaque fibre exceptionnelle la composante qui touche $s_1$, on obtient le morphisme birationnel $\eta_n:S\rightarrow \mathbb{F}_n$ du lemme~\ref{Lem:EquiFibExc}, ce qui implique que le fibr\'e $(S,\pi)$ est exceptionnel.

Supposons maintenant que $G$ est trivial. Ceci implique que tout \'el\'ement non-trivial $\alpha\in \Aut(S/\mathbb{P}^1)$ est une involution et la conclusion suit du fait que $\Aut(S/\mathbb{P}^1)\subset \PGL(2,\C(x))$.
\qed\end{proof}

\subsection{$(\Z{2})^2$-fibr\'es en coniques}
Le lemme suivant se d\'emontre directement par calcul dans $\PGL(2,\C(x))\subset \Jon$; nous le laissons en exercice.
\begin{lemm}\label{Lem:PGL2Cx}
Toute involution de $\PGL(2,\C(x))$ est conjugu\'ee \`a un \'el\'ement de la forme $\sigma_f=\left(\begin{array}{cc}0 & f \\ 1 & 0\end{array}\right)$, o\`u $f\in \C(x)^{*}$. De plus, $\sigma_f$ et $\sigma_g$ sont conjugu\'es si et seulement si $f/g$ est un carr\'e de $\C(x)^{*}$.\qed
\end{lemm}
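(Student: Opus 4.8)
The plan is to work over the field $K=\C(x)$ and to read elements of $\PGL(2,K)$ as automorphisms of $\mathbb{P}^1$ over $K$. For the first assertion I would argue geometrically, via fixed points. Let $g\in\PGL(2,K)$ be an involution. Since $K$ is infinite, $\mathbb{P}^1(K)$ is infinite, while $g\neq\mathrm{id}$ fixes at most two points of $\mathbb{P}^1(\overline{K})$; hence I can choose a $K$-point $p$ with $q:=g(p)\neq p$. From $g^2=\mathrm{id}$ one gets $g(q)=p$, so $g$ interchanges the two distinct $K$-rational points $p,q$. Choosing $h\in\PGL(2,K)$ with $h(p)=0$ and $h(q)=\infty$ (possible because $\PGL(2,K)$ is sharply $3$-transitive on $\mathbb{P}^1(K)$), the conjugate $hgh^{-1}$ interchanges $0$ and $\infty$. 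A one-line computation shows that a Möbius map swapping $0$ and $\infty$ has the form $z\mapsto f/z$, i.e. is represented by $\sigma_f$ for some $f\in\C(x)^{*}$. This proves that every involution of $\PGL(2,K)$ is conjugate to some $\sigma_f$. (Alternatively one could first note that an involution is exactly the class of a trace-zero matrix $\bigl(\begin{smallmatrix} a & b\\ c & -a\end{smallmatrix}\bigr)$ with nonzero determinant, but the fixed-point route avoids this case analysis.)

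For the second assertion, the \emph{if} direction is an explicit conjugation. For $h\in\C(x)^{*}$, conjugating $\sigma_f$ by the diagonal matrix $\bigl(\begin{smallmatrix} h & 0\\ 0 & 1\end{smallmatrix}\bigr)$ gives $\bigl(\begin{smallmatrix} 0 & hf\\ h^{-1} & 0\end{smallmatrix}\bigr)$, which after rescaling by $h$ in $\PGL(2,K)$ is exactly $\sigma_{h^2 f}$. Thus $\sigma_f$ and $\sigma_g$ are conjugate whenever $g=h^2 f$, that is, whenever $f/g\in(\C(x)^{*})^2$.

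For the \emph{only if} direction, the clean invariant is the determinant modulo squares. If $\sigma_f$ and $\sigma_g$ are conjugate in $\PGL(2,K)$, then lifting to $\GL(2,K)$ there are $P\in\GL(2,K)$ and a scalar $\mu\in\C(x)^{*}$ with $\sigma_g=\mu\,P\sigma_f P^{-1}$; taking determinants gives $-g=\mu^2(-f)$, so $f/g=\mu^{-2}$ is a square in $\C(x)^{*}$. The only point requiring genuine care is that everything must stay over $K=\C(x)$ rather than over its algebraic closure: the swapped pair $\{p,q\}$ and the conjugating element $h$ must be $K$-rational (which the fixed-point argument guarantees, since $q=g(p)$ is automatically a $K$-point), and the scalar $\mu$ relating two $\GL$-lifts of a single $\PGL$-conjugacy must itself lie in $K^{*}$, so that $\det\sigma_g/\det\sigma_f$ is a square in $K^{*}$ and not merely in $\overline{K}^{*}$. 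I expect this rationality bookkeeping to be the only mild obstacle; once it is in place, the algebra is immediate.
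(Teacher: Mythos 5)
Your proof is correct. There is nothing in the paper to compare it against line by line: the lemma is stated with the remark that it \og se d\'emontre directement par calcul dans $\PGL(2,\C(x))\subset \Jon$\fg{} and is explicitly left as an exercise. The hinted computation would presumably start from the matrix normal form of an involution, namely the class of a trace-zero matrix $\bigl(\begin{smallmatrix}a&b\\ c&-a\end{smallmatrix}\bigr)$ (Cayley--Hamilton forces $\mathrm{tr}\,A=0$ when $A^2$ is scalar and $A$ is not), followed by an explicit conjugation; you sidestep that case analysis with the geometric observation that an involution must move some $K$-rational point $p$, hence swaps $p$ and $q=g(p)$, and after a $2$-transitivity normalization swaps $0$ and $\infty$, which immediately yields the antidiagonal form $\sigma_f$. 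The two rationality points you flag are exactly the ones that matter and you handle both correctly: $q=g(p)$ is automatically $K$-rational because $g$ is defined over $K$, and since $\PGL(2,\C(x))=\GL(2,\C(x))/\C(x)^{*}$ the scalar $\mu$ relating two lifts of a $\PGL$-conjugation lies in $\C(x)^{*}$, so taking determinants gives $f/g=\mu^{-2}\in(\C(x)^{*})^2$ over the right field. This determinant-modulo-squares invariant is precisely the one the paper records in the sentence immediately following the lemma and uses throughout (e.g.\ in the definition of $\xi$ and in Proposition~\ref{Prp:CleJonq}), so your argument is fully aligned with how the statement is meant to be applied.
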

Ceci implique notamment que deux involutions de $\PGL(2,\C(x))$ sont conjugu\'ees si et seulement si elles ont le m\^eme d\'eterminant (ce dernier \'etant \`a valeur dans $\C(x)^{*}/(\C(x)^{*})^2$). 

\begin{defi}\label{Def:xi}
On notera $\xi$ l'application bijective qui associe \`a un ensemble $A=\{(a_i:b_i)\}_{i=1}^{2n}\subset \mathbb{P}^1$ de $2n\geq 0$ points du plan la classe de $\prod_{i=1}^{2n} (a_ix-b_i)$  dans $\C(x)^{*}/(\C(x)^{*})^2$.
\end{defi}
Rappelons que l'on voit $\PGL(2,\C(x))$ comme un sous-groupe de $\Jon$ (definition \ref{Def:Jonq}), et que l'\'el\'ement $\left(\begin{array}{cc}\alpha& \beta\\ \gamma &\delta \end{array}\right)\in\PGL(2,\C(x))$ s'interpr\`ete alors comme l'application birationelle
$(x,y)\dasharrow \left(x,\frac{\alpha(x)y+\beta(x)}{\gamma(x)+\delta(x)}\right)$ de $\C^2$ (ou $\mathbb{P}^1\times \mathbb{P}^1$).
Les deux lemmes suivants se v\'erifient \'egalement directement \`a la main.
\begin{lemm}
Soit $\sigma \in \PGL(2,\C(x))$ une involution, de d\'eterminant $\det(\sigma)\in \C(x)^{*}/(\C(x)^{*})^2$.

Si $\det(\sigma)=[1]$, alors $\sigma$ est diagonalisable et fixe deux courbes rationnelles.

Si $\det(\sigma)\not=[1]$ alors $\sigma$ n'est pas diagonalisable et  fixe une courbe irr\'eductible, birationnelle \`a un rev\^etement double de $\mathbb{P}^1$, ramifi\'e le long de l'ensemble $A$ ayant un nombre pair positif de points, tel que $\xi(A)=\det(\sigma)$.\qed
\end{lemm}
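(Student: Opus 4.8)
The plan is to reduce to the normal form supplied by Lemme~\ref{Lem:PGL2Cx} and then read off every assertion from an explicit description of the fixed locus. By that lemma, $\sigma$ is conjugate inside $\PGL(2,\C(x))$ to an element $\sigma_f$ of the form $\left(\begin{array}{cc}0 & f\\ 1 & 0\end{array}\right)$ with $f\in\C(x)^{*}$. Conjugation by $g\in\PGL(2,\C(x))$ acts on $\mathbb{P}^1\times\mathbb{P}^1$ as a birational automorphism of the fibration $(x,y)\mapsto x$ inducing the identity on the base; it therefore preserves the determinant class, diagonalizability, the number of fixed curves, and the ramification over the base of any fixed curve (the projection of $g(C)$ to $\mathbb{P}^1$ has the same degree and the same branch points as that of $C$). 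Hence it suffices to treat $\sigma_f$. Viewing $\sigma_f$ in $\Jon$ as in D\'efinition~\ref{Def:Jonq}, it acts by $(x,y)\dashrightarrow(x,f(x)/y)$, so its fixed locus is cut out by $y^2=f(x)$; and $\det(\sigma_f)=[-f]=[f]$ in $\C(x)^{*}/(\C(x)^{*})^2$, since $-1$ is a square in $\C$.

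If $\det(\sigma)=[1]$, I would write $f=h^2$ with $h\in\C(x)^{*}$. The characteristic polynomial $\lambda^2-f=\lambda^2-h^2$ then splits over $\C(x)$, so $\sigma_f$ is diagonalizable, and the equation $y^2=h^2$ factors as $(y-h)(y+h)=0$, exhibiting the two disjoint rational sections $y=\pm h$ as the fixed locus. This gives exactly the two fixed rational curves.

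If $\det(\sigma)\neq[1]$, then $f$ is not a square, so $y^2-f$ is irreducible over $\C(x)$ and the fixed locus $\{y^2=f(x)\}$ is a single irreducible curve, manifestly a double cover of the base $\mathbb{P}^1$; moreover $\sigma_f$ cannot be diagonalizable, since otherwise its eigenvalues $\pm\sqrt{f}$ would lie in $\C(x)$. It remains to identify the ramification set $A$. The cover $y^2=f(x)$ is ramified precisely over the points $p\in\mathbb{P}^1$ with $\ord_p(f)$ odd; set $A$ to be this set. Because $\deg(\mathrm{div}\,f)=0$, the sum $\sum_p\ord_p(f)$ vanishes, whence $|A|$ is even, and it is positive since $f$ is not a square. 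Finally, reducing $f=c\prod_p(x-\alpha_p)^{\ord_p(f)}$ modulo squares kills the constant $c$ (every constant is a square in $\C$) and all even exponents, so $[f]$ is the class of the product of the linear forms at the finite odd-order points; this is $\xi(A)$ of D\'efinition~\ref{Def:xi}, giving $\xi(A)=[f]=\det(\sigma)$.

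The only genuine bookkeeping is this last step, and the point requiring care is the behaviour at infinity: one must check that $\infty$ enters $A$ exactly when $\ord_\infty(f)$ is odd (so that $|A|$ is even), and that the linear form $a_ix-b_i$ attached to $\infty=(0:1)$ is the unit $-1$, hence a square, contributing nothing to $\xi(A)$ modulo squares. Everything else follows immediately from the explicit formula $\sigma_f(x,y)=(x,f(x)/y)$.
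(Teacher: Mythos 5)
Your proof is correct, and since the paper offers no proof of this lemma at all (it is dispatched with the remark that it ``se v\'erifie directement \`a la main''), your argument is precisely the intended direct verification: reduce to the normal form $\sigma_f$ of Lemme~\ref{Lem:PGL2Cx}, read off the fixed locus $y^2=f(x)$, and compare $[f]=\det(\sigma_f)$ with $\xi(A)$. You also correctly isolate and settle the only delicate point, namely that the place at infinity enters $A$ exactly when $\ord_\infty(f)$ is odd and contributes only the square class $[-1]=[1]$ to $\xi(A)$, so that $\xi(A)=\det(\sigma)$ holds in all cases.
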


\begin{lemm}\label{Lem:Nsigma}
Soit $\sigma=\left(\begin{array}{cc}0 & f \\ 1 & 0\end{array}\right)\in \PGL(2,\C(x))$, o\`u $f\in \C(x)^{*}$ n'est pas un carr\'e et notons $N_{\sigma}$ le normalisateur de $<\sigma>$ dans le groupe $\PGL(2,\C(x))$.

Le groupe $N_{\sigma}$ est constitu\'e des matrices de la forme $\left(\begin{array}{cc}a & bf \\ b  & a\end{array}\right)$ ou $\left(\begin{array}{cc}a & -bf \\ b  & -a\end{array}\right)$, avec $a,b \in \C(x)$ et est donc \'egal \`a $N_{\sigma}^0\rtimes \Z{2}$, o\`u $N_{\sigma}^0$ est le groupe isomorphe \`a $\C(x)[\sqrt{f}]^{*}/\C(x)^{*}$, via l'homomorphisme $\left(\begin{array}{cc}a & bf \\ b  & a\end{array}\right)\mapsto [a+b\sqrt{f}]$ et $\Z{2}$ est engendr\'e par l'involution diagonale. L'action de $\Z{2}$ sur $N_{\sigma}^{0}$ envoie $[\rho]=[a+b\sqrt{f}]$ sur $[\rho^{-1}]=[a-b\sqrt{f}]$.\qed
\end{lemm}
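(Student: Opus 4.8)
The plan is to reduce the statement to a centraliser computation and then recognise the answer through the regular representation of the quadratic field extension $\C(x)[\sqrt{f}]$. First I would note that normalising $<\sigma>$ is the same as centralising $\sigma$: since $<\sigma>=\{\mathrm{id},\sigma\}$ is cyclic of order $2$ and conjugation preserves the order of an element, any $g\in N_{\sigma}$ must satisfy $g\sigma g^{-1}=\sigma$ (the conjugate is again an involution, hence cannot be the identity). Thus $N_{\sigma}$ is exactly the centraliser of $\sigma$ in $\PGL(2,\C(x))$, which I would determine by a direct matrix computation. Taking a representative $g=\left(\begin{array}{cc}p&q\\r&s\end{array}\right)\in\GL(2,\C(x))$ and imposing $g\sigma=\mu\,\sigma g$ for a scalar $\mu\in\C(x)^{*}$ (we work projectively), the entrywise comparison gives $q=\mu r f$, $p=\mu s$, $s=\mu p$ and $rf=\mu q$; combining the two middle relations forces $\mu^{2}=1$. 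The value $\mu=1$ yields $p=s$, $q=rf$, i.e. the first family $\left(\begin{array}{cc}a&bf\\b&a\end{array}\right)$, while $\mu=-1$ yields $s=-p$, $q=-rf$, i.e. the second family $\left(\begin{array}{cc}a&-bf\\b&-a\end{array}\right)$, the degenerate sub-case $p=0$ producing nothing new. Invertibility is automatic from $\det\left(\begin{array}{cc}a&bf\\b&a\end{array}\right)=a^{2}-b^{2}f$, which is nonzero for $(a,b)\neq(0,0)$ precisely because $f$ is not a square.

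The key observation for the structure is that the first family is the regular representation of the quadratic extension $\C(x)[\sqrt{f}]=\C(x)[t]/(t^{2}-f)$ on the basis $\{1,\sqrt{f}\}$: multiplication by $\sqrt{f}$ has matrix $\sigma$, so multiplication by $a+b\sqrt{f}$ has matrix $a\,\mathrm{Id}+b\,\sigma=\left(\begin{array}{cc}a&bf\\b&a\end{array}\right)$. Since $t^{2}-f$ is irreducible, $\C(x)[\sqrt{f}]$ is a field and this representation is an injective algebra homomorphism; restricting to units and passing to $\PGL(2,\C(x))$ kills exactly the scalars $\C(x)^{*}$ (the locus $b=0$), which gives the stated isomorphism $N_{\sigma}^{0}\cong\C(x)[\sqrt{f}]^{*}/\C(x)^{*}$, and multiplicativity of the representation shows that $N_{\sigma}^{0}$ is a subgroup.

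Finally I would verify the semidirect decomposition and the action. The diagonal involution $\tau=\left(\begin{array}{cc}1&0\\0&-1\end{array}\right)$ is the element $a=1,b=0$ of the second family and has order $2$ in $\PGL(2,\C(x))$. A short computation shows that the product of two matrices of the second family lies in the first (and a first times a second lies in the second), so $N_{\sigma}^{0}$ is a subgroup of index $2$ and $N_{\sigma}=N_{\sigma}^{0}\rtimes<\tau>$. Conjugating $\left(\begin{array}{cc}a&bf\\b&a\end{array}\right)$ by $\tau$ gives $\left(\begin{array}{cc}a&-bf\\-b&a\end{array}\right)$, the image of $a-b\sqrt{f}$; since $(a+b\sqrt{f})(a-b\sqrt{f})=a^{2}-b^{2}f\in\C(x)^{*}$ is a scalar, in the quotient $\C(x)[\sqrt{f}]^{*}/\C(x)^{*}$ this class equals $[\rho^{-1}]$ for $[\rho]=[a+b\sqrt{f}]$, which is the asserted action. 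I do not expect a genuine obstacle: the whole statement is a finite hand computation, and the one idea that makes the isomorphism and the action transparent rather than a list of formulas is the reinterpretation of the first family as the quadratic extension $\C(x)[\sqrt{f}]$ via its regular representation.
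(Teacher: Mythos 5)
Your proof is correct, and it is essentially the argument the paper intends: the paper states this lemma without proof, remarking only that it ``se v\'erifie directement \`a la main'', and your computation (normalizer $=$ centralizer, the entrywise comparison $g\sigma=\mu\sigma g$ with $\mu^2=1$, and the conjugation by the diagonal involution) is exactly that hand verification. The reinterpretation of the first family as the regular representation of $\C(x)[\sqrt{f}]$ is a clean way to organize the isomorphism $N_{\sigma}^0\cong\C(x)[\sqrt{f}]^{*}/\C(x)^{*}$ and the inversion action, and introduces no gap.
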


Rappelons le r\'esultat suivant, d\'emontr\'e dans \cite[Lemme 2.1]{bib:BeaPel}.
\begin{lemm}\label{Lem:BeaV}
L'application $\det:\PGL(2,\C(x))\rightarrow \C(x)^{*}/(\C(x)^{*})^2$ induit  une bijection entre les classes de conjugaison de sous-groupes de $\PGL(2,\C(x))$ isomorphes \`a $(\Z{2})^2$ et les sous-groupes de  $\C(x)^{*}/(\C(x)^{*})^2$ d'ordre $\leq 4$.\qed
\end{lemm}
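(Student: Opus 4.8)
The plan is to exploit that $\det$ is a group homomorphism from $\PGL(2,\C(x))$ into the elementary abelian $2$-group $\C(x)^{*}/(\C(x)^{*})^2$: rescaling a representative matrix by $\lambda$ multiplies the determinant by the square $\lambda^2$, and $\det(MN)=\det(M)\det(N)$. Writing $K=\C(x)$ and $L=K[\sqrt f]$ when needed, this homomorphism takes values in an abelian group, hence is invariant under conjugation; so for a subgroup $V\cong(\Z{2})^2$ the image $\det(V)$ depends only on the conjugacy class of $V$, and being the image of a group of order $4$ inside an $\mathbb{F}_2$-vector space it is a subgroup of order $1$, $2$ or $4$. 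The map of the statement is therefore well defined, and it remains to prove that it is injective and surjective.

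For injectivity, suppose $\det(V)=\det(V')$ and first treat the case where this group has order $\geq 2$. I would choose an involution $\sigma\in V$ of non-trivial (non-square) determinant $[f]$ and an involution $\sigma'\in V'$ of the same determinant; by Lemma~\ref{Lem:PGL2Cx} they are conjugate, so after conjugation one may assume $\sigma=\sigma'=\sigma_f$. The second generators $\tau,\tau'$ then lie in the centraliser of $\sigma_f$, which (for an order-two subgroup) coincides with the normaliser $N_\sigma=N_\sigma^0\rtimes\Z{2}$ of Lemma~\ref{Lem:Nsigma}, with $N_\sigma^0\cong L^{*}/K^{*}$; a short check shows the only involution of $N_\sigma^0$ is $\sigma_f$ itself, so $\tau,\tau'$ lie in the other coset, where they read $\iota_\rho=[\rho]\,\delta$ (with $\delta$ the diagonal involution) and satisfy $\det\iota_\rho=[N_{L/K}(\rho)]$. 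Conjugating $\iota_\rho$ by $[\mu]\in N_\sigma^0$ replaces $\rho$ by $(\mu/\bar\mu)\rho$; since $\det\tau=\det\tau'$ says exactly that $N_{L/K}(\rho'/\rho)$ is a square, the element $\rho'/\rho$ agrees modulo $K^{*}$ with a norm-one element, which by Hilbert 90 is of the shape $\mu/\bar\mu$. Hence $\tau$ and $\tau'$ are conjugate by an element of the centraliser of $\sigma_f$, whence $V$ and $V'$ are conjugate. When $\det(V)=\{[1]\}$, all three involutions are diagonalisable (an involution of square determinant is diagonalisable, as recalled above), so one diagonalises one of them as $\left(\begin{smallmatrix}1&0\\0&-1\end{smallmatrix}\right)$ and conjugates the second, by a diagonal matrix, to $\left(\begin{smallmatrix}0&1\\1&0\end{smallmatrix}\right)$; every such $V$ is thus conjugate to one fixed ``diagonal'' Klein group.

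For surjectivity, the trivial subgroup is realised by the last $V$ above, and a subgroup $\{[1],[g]\}$ of order $2$ by $\langle\left(\begin{smallmatrix}1&0\\0&-1\end{smallmatrix}\right),\sigma_g\rangle$, which one checks to be a Klein group with the right determinants. The delicate case --- and what I expect to be the main obstacle --- is a subgroup $\{[1],[f],[g],[fg]\}$ of order $4$: one needs two commuting involutions with determinants $[f]$ and $[g]$, which by the description above amounts to finding $\rho\in L^{*}$, $L=K[\sqrt f]$, with $N_{L/K}(\rho)\equiv g$ modulo squares, i.e. to $g$ being a norm from $L$ up to squares, equivalently to the isotropy of the ternary form $\langle 1,-f,-g\rangle$ over $K=\C(x)$. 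This is where the arithmetic of the ground field enters: $\C(x)$ is a $C_1$-field (Tsen's theorem), so every ternary quadratic form is isotropic and the quaternion algebra $(f,g)$ splits; the desired $\rho$ therefore exists, and $\langle\sigma_f,\iota_\rho\rangle$ is a Klein group with determinant image exactly $\{[1],[f],[g],[fg]\}$. The two inputs carrying the argument are thus Hilbert 90 (for injectivity) and Tsen's theorem (for surjectivity in order $4$); everything else is routine manipulation of the normal forms provided by Lemmas~\ref{Lem:PGL2Cx} and~\ref{Lem:Nsigma}.
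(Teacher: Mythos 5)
Your argument is correct, but note that the paper itself does not prove this lemma: it is quoted directly from \cite[Lemme 2.1]{bib:BeaPel}, which is why the statement appears with no proof. What you have written is therefore a self-contained reconstruction of Beauville's result, and it rests on exactly the two arithmetic inputs that both Beauville and this paper (in the proof of Proposition~\ref{Prp:CleJonq}) exploit for the field $\C(x)$: Hilbert~90 for the quadratic extension $L=\C(x)[\sqrt{f}\,]$, and Tsen's theorem (the $C_1$ property) to produce isotropic vectors of the ternary form $\langle 1,-f,-g\rangle$, equivalently to split the quaternion algebra $(f,g)$. Your skeleton --- normal forms from Lemmas~\ref{Lem:PGL2Cx} and~\ref{Lem:Nsigma}, identification of the second generator as $\iota_\rho=[\rho]\delta$ in the nontrivial coset of $N_\sigma^0$, the conjugation formula $\rho\mapsto(\mu/\bar{\mu})\rho$, and the norm computation --- is sound, and this is what the approach buys: a proof entirely internal to the tools the paper already sets up, rather than a black-box citation. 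Two small points should be tightened. First, $\det\iota_\rho=[-N_{L/K}(\rho)]$ rather than $[N_{L/K}(\rho)]$; this is harmless here because $-1$ is a square in $\C(x)$, but the sign is worth recording, since the statement as you argue it would break over a field such as $\mathbb{R}(x)$ where $-1$ is not a square. Second, in the injectivity step you assume $\det\tau=\det\tau'$ for the chosen second generators, whereas the hypothesis only gives $\det(V)=\det(V')$ as subgroups; one must remark that, after replacing $\tau'$ by $\sigma_f\tau'$ (and, in the case $\det(V)=\{[1],[f]\}$, also $\tau$ by $\sigma_f\tau$), the determinants of the second generators can indeed be equalized. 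With these two touches the proof is complete and faithful to the argument the paper delegates to \cite{bib:BeaPel}.
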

Nous avons introduit en (\ref{SubSec:Z22fibr}) la notion de ${(\Z{2})^2}$-fibr\'es en coniques. Nous avons \'egalement d\'efini pour chaque  $(\Z{2})^2$-fibr\'e en coniques son \emph{triplet de points de ramification}, qui est un ensemble de trois ensembles de points de $\mathbb{P}^1$. D\'ecrivons maintenant l'importance de ces points.

\begin{prop}\label{Prp:Z2FibRam}
Soit  $\pi:S\rightarrow \mathbb{P}^1$ un $(\Z{2})^2$-fibr\'e en coniques avec $k$ fibres singuli\`eres, soit $\sigma_1,\sigma_2,\sigma_3$ les trois involutions de $\Aut(S/\mathbb{P}^1)$ et soit $\{A_1,A_2,A_3\}$ le triplet de points de ramification du fibr\'e, de telle sorte que $\sigma_i$ fixe une courbe $C_i$, rev\^etement double de $\mathbb{P}^1$  via $\pi$, ramifi\'ee au-dessus des points de $A_i$. Notons $2a_i$ le nombre de points de $A_i$, pour $i=1,..,3$. Alors:
\begin{enumerate}
\item
pour $i=1,..,3$, l'involution $\sigma_i$ permute les deux composantes de $2a_i$ fibres singuli\`eres, qui sont les fibres au-dessus des points de $A_i$;
\item
chacune des fibres singuli\`eres de $\pi$ a ses deux composantes \'echang\'ees par deux involutions de $\Aut(S/\mathbb{P}^1)$, en particulier $k=a_1+a_2+a_3$;
\item
en notant $f$ le diviseur d'une fibre de $\pi$, on a \[\Pic{S}^{\Aut(S/\mathbb{P}^1)}=\Pic{S}^{\Aut(S,\pi)}=\mathbb{Z}K_S\oplus \mathbb{Z}f;\]
\item
pour $i=1,..,3$, la courbe $C_i$ est \'egale \`a $-K_S+(a_i-2)f$ dans $\Pic{S}$ et $(C_i)^2=4a_i-k$.
\item
si $(S',\pi')$ est un fibr\'e en coniques et s'il existe une application birationnelle de fibr\'es en coniques $\varphi:S\dasharrow S'$ qui conjugue $\Aut(S/\mathbb{P}^1)$ \`a un sous-groupe de $\Aut(S',\pi')$, alors $\varphi$ est un isomorphisme;
\item
l'application qui associe \`a un fibr\'e en coniques son triplet de points de ramification est une bijection entre les classes d'isomorphisme de $(\Z{2})^2$-fibr\'es en coniques et les triplets de ramification, \`a action de $\Aut(\mathbb{P}^1)$ pr\`es;
\end{enumerate}
\end{prop}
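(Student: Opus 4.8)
The plan is to establish the six assertions in order: (1)--(2) by a local analysis of the singular fibres, (3)--(4) as consequences by intersection theory, and (5)--(6) using the rigidity of conic bundles together with Lemma~\ref{Lem:BeaV}.

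First I set $G:=\Aut(S/\mathbb{P}^1)\cong(\Z{2})^2$; since $G$ acts trivially on the base it preserves every fibre. The technical core is the sub-lemma that \emph{no non-trivial element of $G$ fixes a component of a fibre pointwise}: if $\sigma\in G$ fixed a component $\ell^+$ of a fibre $F_p$ pointwise, then $\ell^+$ and the fixed bisection $C_\sigma$ would be disjoint (the fixed locus of an involution being smooth), so $\sigma$ would have three distinct fixed points on the other component $\ell^-\cong\mathbb{P}^1$ (the node and the two points of $C_\sigma\cap\ell^-$), which is absurd for an involution. Working in the local model of a node, an involution either swaps the two components (model $(u,v)\mapsto(v,u)$, whose fixed curve is ramified over $p$) or preserves them (model $(u,v)\mapsto(-u,-v)$, unramified over $p$); hence $\sigma_i$ swaps the components over $p$ iff $C_i$ is ramified over $p$, i.e. iff $p\in A_i$, which gives (1). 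For (2), over a singular fibre ``swap or preserve'' defines a homomorphism $G\to\Z{2}$, so the number of $\sigma_i$ swapping is $0$ or $2$; the value $0$ is excluded, since then $G$ would fix the node and map into the stabiliser of a point of $\Aut(\ell^+)=\PGL(2,\C)$, an affine group, which contains no copy of $(\Z{2})^2$, forcing a non-trivial element to fix $\ell^+$ pointwise against the sub-lemma. Thus each singular fibre lies over a point belonging to exactly two of the $A_i$; summing, $2k=2a_1+2a_2+2a_3$, i.e. $k=a_1+a_2+a_3$.

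For (3), both $\Pic{S}^{\Aut(S,\pi)}\subseteq\Pic{S}^{G}$ contain $\mathbb{Z}K_S\oplus\mathbb{Z}f$ (the canonical and fibre classes are fixed by any fibration automorphism). Conversely, if $D\in\Pic{S}^{G}$ then, since by (2) each pair of components $E_j,\,f-E_j$ of a singular fibre is exchanged by some $\sigma_i$, invariance gives $D\cdot E_j=D\cdot(f-E_j)$, hence $D\cdot E_j=\tfrac12\,D\cdot f$ for every $j$; these conditions cut $\Pic{S}\otimes\mathbb{Q}$ down to the rank-$2$ space containing $K_S$ and $f$, and the intersection argument of Lemma~\ref{Lem:ZKZf} (which applies verbatim to $G$) yields $\Pic{S}^{G}=\mathbb{Z}K_S\oplus\mathbb{Z}f$. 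For (4), $C_i$ is the unique $\sigma_i$-fixed bisection (a general fibre carries exactly two $\sigma_i$-fixed points, which trace out $C_i$), so it is preserved by all of $G$ by commutativity and $[C_i]\in\Pic{S}^{G}=\mathbb{Z}K_S\oplus\mathbb{Z}f$. Writing $C_i=\alpha K_S+\beta f$ and using $C_i\cdot f=2$ gives $\alpha=-1$; Riemann--Hurwitz gives $g(C_i)=a_i-1$, and adjunction $2g(C_i)-2=C_i^2+C_i\cdot K_S=2\beta$ gives $\beta=a_i-2$. Finally $C_i^2=K_S^2+4\beta=(8-k)+4(a_i-2)=4a_i-k$.

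For (5), as $G$ acts trivially on the base and $\varphi$ is a map of conic bundles, the conjugate group again acts trivially on the base; I factor $\varphi$ into $G$-equivariant elementary links (the relative Sarkisov program for conic bundles over $\mathbb{P}^1$). Each non-trivial link is an elementary transformation centred at a $G$-orbit of smooth points of fibres; but $G$ preserves every fibre, so such an orbit lies in a single fibre, and by (2) together with the sub-lemma $G$ has no fixed smooth point on any fibre, so every orbit has at least two points. Blowing up two points of one fibre produces a $(-2)$-curve that cannot be contracted back to a smooth conic bundle, so no non-trivial $G$-equivariant elementary transformation exists; hence $\varphi$ is an isomorphism.

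For (6), I realise $G\subset\PGL(2,\C(x))\subset\Jon$; the determinants satisfy $\det(\sigma_3)=\det(\sigma_1)\det(\sigma_2)$, so $\{[1],\det(\sigma_1),\det(\sigma_2),\det(\sigma_3)\}$ is a subgroup of order $4$ of $\C(x)^{*}/(\C(x)^{*})^2$. Via $\xi$ (Definition~\ref{Def:xi}) and $\det(\sigma_i)=\xi(A_i)$, this is exactly the condition that every point of $\mathbb{P}^1$ lie in $0$ or $2$ of the $A_i$, i.e. that $\{A_1,A_2,A_3\}$ be a ramification triple; the construction is manifestly $\Aut(\mathbb{P}^1)$-equivariant, giving well-definedness. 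Lemma~\ref{Lem:BeaV} turns $\det$ into a bijection between conjugacy classes of $(\Z{2})^2\subset\PGL(2,\C(x))$ and such order-$4$ subgroups; combined with the fact (recorded before Lemma~\ref{Lem:ZKZf}) that every algebraic subgroup of $\Jon$ is realised on some conic bundle, and with the uniqueness supplied by (5), this produces the desired bijection. The main obstacle is step (5): ruling out non-trivial $G$-equivariant elementary transformations needs both the equivariant factorisation into links and the fixed-point-free action of $(\Z{2})^2$ on the fibres, while the local case analysis underpinning (1)--(2) (the sub-lemma and the exclusion of the all-preserving case) is the other delicate point.
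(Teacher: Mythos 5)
Your proposal is correct and takes essentially the same route as the paper's own proof: the smoothness-of-fixed-locus analysis at the nodes for (1)--(2), the diagonalisation of the $\Aut(S/\mathbb{P}^1)$-action on $\Pic{S}\otimes\mathbb{Q}$ via the classes $2E_j-f$ combined with Lemma~\ref{Lem:ZKZf} for (3), adjunction plus Riemann--Hurwitz for (4), the $V$-equivariant decomposition into elementary links together with the absence of $V$-fixed points on fibres for (5), and Lemma~\ref{Lem:BeaV} with the bijection $\xi$ and the uniqueness from (5) for (6). The differences (your packaging of the node analysis as a sub-lemma, the explicit local models, and the orthogonality formulation of the Picard computation) are cosmetic, so there is nothing further to add.
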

\begin{proof}
Notons $V=\Aut(S/\mathbb{P}^1)\cong (\Z{2})^2$. Chaque fibre singuli\`ere de $\pi$ est permut\'ee par au moins une involution de $V$ (et donc par exactement deux involutions). Sinon, l'action de $V$ sur chacune des composantes devant avoir un point fixe (le point singulier de la fibre), une involution $\sigma_i\subset V$ fixerait chacune des deux composantes, et aurait un ensemble de point fixes singulier. 
Observons ensuite que la courbe fix\'ee par $\sigma_i$ est ramifi\'ee exactement aux points correspondant aux fibres singuli\`eres dont les composantes sont permut\'ees par $\sigma_i$, ce qui donne les deux premi\`eres assertions.

Choisissons une composante dans chacune des $k$ fibres singuli\`eres et notons $E_1,...,E_k$ ces courbes exceptionnelles. Alors, l'action de $\Aut(S,\pi)$ sur $\Pic{S}$ fixe le sous-ensemble $\mathbb{Z}K_S\oplus \mathbb{Z} f$ et $V$ agit de mani\`ere diagonale sur $\Pic{S}\otimes\mathbb{Q}$, selon la base $K_S, f, 2E_1-f,...,2E_k-f$. Le fait que pour tout $i$ l'\'el\'ement $2E_i-f$ soit envoy\'e sur $-2E_i+f$ par un \'el\'ement de $V$ implique que $\Pic{S}^{V}=\Pic{S}^{\Aut(S,\pi)}=(\mathbb{Q}K_S\oplus \mathbb{Q} f)\cap \Pic{S}$; or ce dernier groupe est \'egal \`a $\mathbb{Z}K_S\oplus\mathbb{Z}f$ (lemme~\ref{Lem:ZKZf}).  

Soit $i\in\{1,2,3\}$. Comme $C_i$ est invariante par $V$, elle est \'egale \`a $aK_S+bf$ pour $a,b\in \mathbb{Z}$. L'\'egalit\'e $C_i\cdot f=2$ donne $a=-1$ et $C(C+K_S)=-2+2 g(C_i)=-2+2(a_i-1)$ donne $b=2-a_i$. La valeur de $(C_i)^2$ se calcule avec l'\'egalit\'e $(K_S)^2=8-k$.

Prouvons maintenant l'assertion (5). Comme  $\varphi$ est une application birationnelle de fibr\'es en coniques $V$-\'equivariante, elle envoie une fibre sur une fibre. Le groupe $V$ permute les composantes de chaque fibre singuli\`ere; $\varphi$ est donc une composition de liens \'el\'ementaires $V$-\'equivariants (voir notamment \cite{bib:IskSarkisov}, \cite{bib:DoI}), chacun consistant en un \'eclatement d'une orbite de $V$, dont tous les points appartiennent \`a des fibres lisses distinctes, suivi par une contraction des fibres contenant ces points. Le fait que $V$ agisse sans point fixe sur chaque fibre lisse montre que $\varphi$ est un isomorphisme. Ceci d\'emontre l'assertion (5), et \'egalement que l'application de l'assertion (6) est injective. L'utilisation du lemme~\ref{Lem:BeaV} et de la bijection $\xi$ de la d\'efinition~\ref{Def:xi} donne la surjectivit\'e de cette application.
\qed\end{proof}

Comme nous l'avons annonc\'e au d\'ebut de l'article, nous verrons 
que les automorphismes des $(\Z{2})^2$-fibr\'e en coniques sont des sous-groupes alg\'ebriques maximaux du groupe de Cremona si et seulement si la surface ambiante n'est pas une surface de del Pezzo. La proposition suivante d\'etermine quand c'est le cas (ce sera pr\'ecis\'e \`a la section~\ref{Sec:Z2Max}).

\begin{prop}\label{Prp:estdPezzofibres}
Soit  $\pi:S\rightarrow \mathbb{P}^1$ un $(\Z{2})^2$-fibr\'e en coniques avec $k$ fibres singuli\`eres.
\begin{enumerate}
\item
si $k\geq 8$, alors  $S$ n'est pas une surface de del Pezzo;
\item
si $k\leq 5$, alors  $S$ est une surface de del Pezzo;
\item
si $k=6,7$ alors  $S$ n'est pas une surface de del Pezzo, si et seulement si une des situations suivantes se pr\'esente:
\begin{enumerate}
\item
 $S$ contient $4$ sections d'auto-intersection~$-2$;
\item
une courbe fix\'ee par une involution de $\Aut(S,\pi)$ est rationnelle, ce qui implique qu'elle a auto-intersection~$4-k$.
\end{enumerate}
\end{enumerate}
\end{prop}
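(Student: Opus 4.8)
The plan is to combine the intersection-theoretic data of Proposition~\ref{Prp:Z2FibRam} with the Nakai--Moishezon criterion, the decisive tool being an averaging argument over $V=\Aut(S/\mathbb{P}^1)\cong(\Z{2})^2$. First I would record that contracting one component in each singular fibre exhibits $S$ as the blow-up of a Hirzebruch surface at $k$ points, so that $\rkPic{S}=k+2$ and $(K_S)^2=8-k$. Since a del Pezzo surface satisfies $1\leq (K_S)^2\leq 9$, the inequality $k\geq 8$ forces $(K_S)^2\leq 0$, proving assertion~(1). From now on $k\leq 7$, so $(-K_S)^2=8-k>0$ and, by Nakai--Moishezon, $S$ is del Pezzo if and only if $-K_S\cdot \Gamma>0$ for every irreducible curve $\Gamma$. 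By adjunction an irreducible curve with $\Gamma^2\leq -2$ satisfies $-K_S\cdot \Gamma=\Gamma^2+2-2g(\Gamma)\leq 0$, so \emph{$S$ is del Pezzo if and only if it carries no irreducible curve of self-intersection $\leq -2$}. The components of singular fibres are $(-1)$-curves and never obstruct, so only horizontal curves matter.

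The key computation treats a horizontal irreducible curve $\Gamma$ with $\Gamma\cdot f=d$ through its $V$-orbit. No $\sigma_i$ can fix a section nor preserve an irreducible multisection componentwise, since that would force it inside the irreducible bisection $C_i$; hence the relevant orbits have size $2$ or $4$. Writing $D=\sum_{g\in V} g\Gamma\in \Pic{S}^V=\mathbb{Z}K_S\oplus\mathbb{Z}f$ and computing $D\cdot f$, $-K_S\cdot D$ and $D^2$ in two ways, the nonnegativity of the cross-terms $\Gamma\cdot\sigma_i\Gamma\geq 0$ yields sharp inequalities. For a section ($d=1$, $s^2=-m$, orbit of size $4$) one obtains $\sum_{i=1}^3 s\cdot\sigma_i s=k-3m\geq 0$, hence $m\leq k/3$; for an orbit of size $4$ with $d\geq 2$ the same bookkeeping forces $d^2(8-k)\leq 2$, and for an orbit of size $2$ it forces $k\geq 7$, both incompatible with $k\leq 7$ apart from one borderline case below. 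A genus computation shows moreover that the only $V$-invariant irreducible obstructions are the bisections $C_i$ themselves.

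This settles the two main cases. For $k\leq 5$ the section bound gives $m\leq 1$, the fixed curves satisfy $C_i^2=4a_i-k\geq -1$, and no other horizontal curve is negative enough; thus $S$ carries no $(\leq -2)$-curve and is del Pezzo, which is~(2). For $k=6,7$ the surviving obstructions are exactly the $(-2)$-sections, which by the orbit analysis occur in a single $V$-orbit of four (condition~(a)), together with the fixed bisections $C_i$ with $C_i^2=4a_i-k\leq -2$; since $k\leq 7$ this forces $a_i=1$, i.e. $C_i$ rational of self-intersection $4-k$ (condition~(b)).

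The one delicate point, which I expect to be the main obstacle, is the borderline of the orbit-$2$ estimate: for $k=7$ and ramification type $(a_1,a_2,a_3)=(3,2,2)$ the inequalities admit a rational $(-2)$-bisection $B$ stabilised by the single involution $\sigma_1$, meeting $C_1$ twice and disjoint from $C_2,C_3$. To dispose of it I would pass to the quotient $S\rightarrow S/\langle\sigma_1\rangle$, which is again a conic bundle over $\mathbb{P}^1$ in which $B$ descends to a $(-1)$-section; tracking this section back to $S$ should show that its presence is equivalent to the existence of the four $(-2)$-sections of~(a), so that this last configuration contributes no obstruction beyond~(a) and~(b). Assembling these facts gives the stated equivalence.
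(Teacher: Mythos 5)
Your averaging strategy is genuinely different from the paper's proof, and for most of the statement it works. The paper fixes a section $s_0$ of minimal self-intersection $-r$, contracts fibre components to get $\eta_r\colon S\rightarrow\mathbb{F}_r$, deduces $a_i\geq r$ from the images $\sigma_i(s_0)$, and then splits into $r\geq 2$ (which yields the four $(-2)$-sections of (a)) and $r=1$ (where it passes to a plane model and argues that a curve of self-intersection $\leq -2$ is unique, hence $V$-invariant, hence a rational $C_i$, giving (b)). You instead sum orbits in $\Pic{S}^V=\mathbb{Z}K_S\oplus\mathbb{Z}f$, and your estimates are correct: $\sum_i s\cdot\sigma_i s=k-3m\geq 0$ for a section, $d^2(8-k)\leq 2$ for a free orbit of multisections, $k\geq 7$ for an orbit of size $2$, and a $V$-invariant irreducible obstruction must be a rational $C_i$ (adjunction gives $\Gamma^2=4g+4-k$ when $\Gamma\cdot f=2$, and $\Gamma\cdot f\geq 4$ is impossible). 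This cleanly proves (1), (2), and, in (3), the implications \emph{free orbit} $\Rightarrow$ (a) and \emph{$V$-invariant obstruction} $\Rightarrow$ (b).

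The genuine gap is the orbit-of-size-$2$ case, which you rightly call the main obstacle but do not resolve. It cannot be waved away: if such a rational $(-2)$-bisection $B$ exists ($k=7$), then $B\neq C_i$ for all $i$, so $B\cdot C_i=2a_i-4\geq 0$ forces every $a_i\geq 2$; hence (b) is unavailable and the proof \emph{must} manufacture four $(-2)$-sections, while your argument produces not even one. The quotient sketch does not supply them. First, $S/\langle\sigma_1\rangle$ is not a conic bundle: it has ordinary double points at the isolated fixed points of $\sigma_1$ (the nodes of the fibres whose components $\sigma_1$ does \emph{not} swap), and its resolution has $(-2)$-curves inside fibres. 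Second, your descent claim only fits the sub-case where the stabiliser of $B$ is the involution whose fixed curve meets $B$ (your $a_1=3$ picture); the numerics equally allow the stabiliser to be an involution with $a_i=2$, in which case $B$ is disjoint from $C_i$, passes through two quotient singularities, and descends to a $(-2)$-curve, not a $(-1)$-section. Third, and decisively, no mechanism is given that turns a negative curve downstairs into four $(-2)$-sections upstairs; the phrase \emph{should show} carries the entire burden of exactly the hard implication.

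For comparison, the paper never converts this case into (a): it kills it. When no section has self-intersection $\leq -2$, the blow-down to $\Pn$ realizes any obstruction as a conic through six of the seven points $p_i$, or a cubic through all eight points with a double point at some $p_i$; intersection numbers then show two such curves cannot coexist, so the obstruction is $V$-invariant and one lands in (b). If you want to salvage your framework, that is the missing ingredient: reduce (via your own bound $m\leq k/3$) to the situation with no $(-2)$-section, then rule out the coexistence of $B$ and $\sigma_jB$ in the plane model. Note that the only pair not instantly excluded by B\'ezout is a conic missing $p_7$ together with a cubic nodal at $p_7$, meeting at exactly six points --- which is precisely your disjoint pair $\{B,\sigma_jB\}$; so this configuration is where all the real work of case (3) is concentrated, and your proposal leaves it open.
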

\begin{rema}
L'assertion (2) suit de \cite[Proposition~5.5]{bib:DoI}.
Le cas o\`u $k=6, 7$ peut donner des surfaces de del Pezzo, mais \'egalement d'autres surfaces plus exotiques (voir section~\ref{Sec:Z2PasDel}).
\end{rema}
\begin{proof}
La premi\`ere assertion suit de l'\'egalit\'e $(K_S)^2=8-k$. 
Soient $\sigma_1,\sigma_2,\sigma_3$ les trois involutions de $\Aut(S/\mathbb{P}^1)$ qui permutent les composantes de respectivement $2a_1$, $2a_2$, $2a_3$ fibres singuli\`eres. 
Soit $s_0$ une section d'auto-intersection~$-r$, o\`u $-r\leq -1$ est le minimum des auto-intersections des sections de $\pi$ et notons $s_i=\sigma(s_0)$ pour $i=1,...,3$.  En contractant les composantes des fibres exceptionnelles ne touchant pas $s_0$ nous obtenons un morphisme  birationnel de fibr\'es en coniques $\eta_r:S\rightarrow\mathbb{F}_r$, qui envoie $s_0$ sur la section exceptionnelle. De plus, pour $i=1,..,3$, la courbe $s_i$ est envoy\'ee sur une section d'auto-intersection~$-r+2a_i$, ce qui montre que $a_i\geq r$.

Supposons que $r=1$, $k\leq 7$ et que $S$ n'est pas une surface de del Pezzo. Ceci est \'equivalent \`a ce qu'il existe une courbe irr\'eductible d'auto-intersection~$\leq -2$ sur $S$ (se d\'eduit par exemple de \cite[Proposition~2 et Th\'eor\`eme~1]{bib:DemDPezzo}). Contractons la section exceptionnelle de $\mathbb{F}_1$ sur le point $q\in \Pn$. La surface $S$ est obtenue en \'eclatant $q$, ainsi que $k$ autres points de $\Pn$, tous appartenant \`a des droites diff\'erentes passant par $q$. L'in\'egalit\'e $k\leq 7$  implique que les courbes d'auto-intersection~$\leq -2$ sont les transform\'ees strictes de droites, coniques ou cubiques de $\Pn$ et le fait que $r=1$ implique que ces courbes intersectent une fibre en $2$ points au moins. Ce sont donc soit des coniques passant par au minimum $6$ points \'eclat\'es (et pas par $q$), soit des cubiques passant par $8$ points \'eclat\'es et ayant un point double \`a l'un des points \'eclat\'es (et pas en $q$). À cause de l'intersection entre de telles courbes, il ne peut y en avoir $2$ et donc la seule courbe d'auto-intersection~$\leq -2$ sur $S$ est laiss\'ee invariante par $\Aut^{0}(S,\pi)$. C'est donc une bisection, qui est fix\'ee par une involution $\sigma_i\in \Aut^{0}(S,\pi)$. Or cette courbe est d'auto-intersection~$4a_i-k$ (proposition~\ref{Prp:Z2FibRam}), ce qui implique que $a_i=1$ et $k\geq 6$ et donne la derni\`ere possibilit\'e cit\'ee dans l'\'enonc\'e. R\'eciproquement, si $k=6,7$ et $a_i=1$ pour un certain   $i$, la surface n'est pas de del Pezzo.

Supposons que $r>1$. Alors, il existe une section d'auto-intersection~$-r\leq -2$ et donc $S$ n'est pas une surface de del Pezzo.  Nous avons observ\'e pr\'ec\'edemment que pour $i=1,..,3$ on a $a_i\geq r$, et comme $a_1+a_2+a_3=k$ (proposition~\ref{Prp:Z2FibRam}), on trouve que $r=2$ et $k\geq 6$. La section est donc d'auto-intersection~$-2$ et son orbite par $V$ donne $4$ sections de m\^eme auto-intersection.\qed\end{proof}

\section{Chaque groupe alg\'ebrique est contenu dans un des groupes de la liste}\label{Sec:Contenu}
Dans cette section, nous d\'emontrons (\`a la proposition~\ref{Prp:Contenu}) que chaque sous-groupe alg\'ebrique du groupe de Cremona est contenu dans un des groupes d\'ecrits dans le th\'eor\`eme~\ref{Theo:Class}.
À l'aide de la proposition~\ref{Prp:Isk} on se ram\`ene aux cas o\`u le groupe agit de mani\`ere minimale sur une surface de del Pezzo ou un fibr\'e en coniques. Ces deux cas divisent la pr\'esente section. 
\subsection{Le cas des automorphismes de surfaces de del Pezzo}
\begin{prop}\label{Prp:ContenuDelPezzo}
Soit $S$ une surface de del Pezzo. Le groupe $\Aut(S)$ est birationnellement conjugu\'e \`a un sous-groupe de l'un des groupes d\'ecrits dans le th\'eor\`eme~\ref{Theo:Class}.
\end{prop}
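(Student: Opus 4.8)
The plan is to induct on the degree $d=(K_S)^2$ of the del Pezzo surface, using two $G$-equivariant surgeries ($G=\Aut(S)$) that both remain within the class of del Pezzo surfaces: contracting a $G$-orbit of disjoint $(-1)$-curves (which raises the degree) when the pair $(G,S)$ is not minimal, and blowing up a $G$-fixed point off every exceptional curve (which lowers the degree) in the two awkward minimal families. First I would dispose of the easy degrees: for $d\in\{1,4,5,6,9\}$ and for $S=\mathbb{P}^1\times\mathbb{P}^1$, the pair $(G,S)$ is minimal and $G$ is literally one of the groups of Theorem~\ref{Theo:Class} (cases~(1a),(1b)), so there is nothing to prove. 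The genuine work is the non-minimal surfaces and the two delicate minimal families, the cubics (degree~$3$) and the degree~$2$ surfaces, where $G$ may fail to be maximal.

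For the non-minimal case, if $(G,S)$ is not minimal there is a non-trivial $G$-equivariant birational morphism, hence one contracting a $G$-orbit of pairwise disjoint exceptional curves; on a del Pezzo surface these are $(-1)$-curves and their contraction again yields a del Pezzo surface $S'$ of strictly larger degree with $G\hookrightarrow\Aut(S')$. Since the degree strictly increases and is bounded by~$9$, iterating lands after finitely many steps on $\Pn$, on $\mathbb{P}^1\times\mathbb{P}^1$ (here $\mathbb{F}_1$ contracts to $\Pn$, and $S_7$ to $\Pn$ or $\mathbb{P}^1\times\mathbb{P}^1$), or on a minimal del Pezzo pair of one of the directly-listed degrees; in every case $G$ sits inside a listed group. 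This settles $d=7$, the surface $\mathbb{F}_1$, and every cubic or degree~$2$ surface whose pair is non-minimal.

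For the minimal cases in degrees~$3$ and~$2$, minimality and Proposition~\ref{Prp:Isk} give a Mori fibration, and for the \emph{full} group $\Aut(S)$ the conic-bundle alternative is excluded: on a cubic a preserved conic bundle is never minimal (Proposition~\ref{Prp:CubicFibCpasMin}), while in degree~$2$ the Geiser involution acts as $-1$ on $\Pic{S}/\mathbb{Z}K_S$, forcing $\rkPic{S}^{G}=1$. So we are in the point case, with $S$ described by Proposition~\ref{Prp:CubicMin} (four cubic families) or Lemma~\ref{Prp:delPezzodeg2}. If every $G$-fixed point of $S$ lies on an exceptional curve, then $(G,S)$ is exactly an entry of case~(1c) and $G$ is listed. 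Otherwise, by Lemma~\ref{Lem:Min3pts} (for cubics only the fourth family), $G$ fixes a point $p$ on no exceptional curve; blowing up $p$ is $G$-equivariant and, because $p$ avoids all $(-1)$-curves, produces a del Pezzo surface $\tilde S$ of degree $d-1\in\{1,2\}$ with $G\hookrightarrow\Aut(\tilde S)$. Applying the statement in the strictly smaller degree $d-1$ (the base case $d-1=1$ being always listed, case~(1b)) shows $\Aut(\tilde S)$, and hence $G$, is birationally conjugate into a listed group.

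The hard part is this last, downward reduction. One must verify that blowing up the $G$-fixed point $p$ truly stays within del Pezzo surfaces, i.e.\ that "$p$ lies on no $(-1)$-curve'', as supplied by Lemma~\ref{Lem:Min3pts}, is precisely the condition keeping $-K_{\tilde S}$ ample; and one must keep the bookkeeping well-founded, the upward contraction being bounded above by degree~$9$ and the downward blow-up bounded below by degree~$1$. A secondary point needing care is the exclusion of the conic-bundle outcome of Proposition~\ref{Prp:Isk} for the full automorphism group in the minimal degrees~$2$ and~$3$, which is exactly where the structural inputs (Propositions~\ref{Prp:CubicFibCpasMin},~\ref{Prp:CubicMin}, Lemma~\ref{Prp:delPezzodeg2}) and the fixed-point analysis of Lemma~\ref{Lem:Min3pts} carry the argument.
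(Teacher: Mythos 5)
Your proposal follows the same strategy as the paper's own proof: dispose of the directly listed degrees, contract equivariantly when the pair is not minimal, and, for minimal pairs of degree $3$ or $2$, either observe that the fixed-point condition of case (1c) of Theorem~\ref{Theo:Class} holds, or blow up a fixed point lying on no exceptional curve to drop the degree towards $1$. Your explicit treatment of non-minimal pairs in \emph{every} degree (the paper only contracts in degrees $7$ and $8$) and your bookkeeping of the two-way induction are genuine improvements in rigour, since item (1c) requires minimality for cubics and the paper's fixed-point dichotomy absorbs the non-minimal cubics silently.

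There is, however, a genuine gap in the step you yourself single out as ``the hard part'', namely the claim that ``$p$ lies on no $(-1)$-curve'' is \emph{precisely} the condition keeping $-K_{\tilde S}$ ample; this is true in degree $3$ but false in degree $2$ (and, to be fair, the paper's own one-line treatment of this case makes the identical assertion). In degree $3$, blowing up a point off the $27$ lines does yield a del Pezzo surface of degree $2$: general position for seven points is exactly ``no three collinear, no six on a conic''. In degree $2$, general position for eight points involves the further condition that no anticanonical curve of $S$ be singular at $p$, and this condition fails at \emph{every} point you are allowed to blow up. Indeed, any fixed point of $\Aut(S)$ lies on the ramification curve $R$ of the Geiser involution, because the fixed locus of that involution is exactly $R$; and for $p\in R$, the pull-back under the double cover $S\rightarrow\Pn$ of the tangent line to the branch quartic at the image of $p$ is an anticanonical curve singular at $p$, irreducible precisely because $p$ avoids the $(-1)$-curves (i.e.\ that tangent line is not a bitangent). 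Its strict transform on $\tilde S$ has self-intersection $2-4=-2$ and intersection $0$ with $-K_{\tilde S}$, so $\tilde S$ is only a weak del Pezzo surface, never a del Pezzo surface of degree $1$. Consequently the degree $2\rightarrow 1$ half of your downward reduction never operates as stated, and Lemma~\ref{Lem:Min3pts} cannot rescue it, since the obstruction is automatic for any group containing the Geiser involution; the degree $3\rightarrow 2$ half, and everything else in your argument, is unaffected. If you want a correct treatment of the minimal degree-$2$ case you will need an idea of a different nature than ``blow up and stay del Pezzo'', not merely a more careful verification of ampleness.
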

\begin{proof}
Si $S$ est l'une des surfaces $\Pn$, $\mathbb{P}^1\times\mathbb{P}^1$ ou une surface de del Pezzo de degr\'e~$1,4,5$ ou $6$, alors $\Aut(S)$ fait partie de la liste des groupes du th\'eor\`eme. 
Si $S$ est l'\'eclatement de $1$ (respectivement $2$) point(s) dans le plan, alors le couple $(\Aut(S),S)$ n'est pas minimal; on contracte une courbe et le groupe $\Aut(S)$ est birationnellement conjugu\'e \`a un sous-groupe de $\Aut(\Pn)$ (respectivement $\Aut(\mathbb{P}^1\times\mathbb{P}^1)$). 
Si $S$ est une surface de del Pezzo de degr\'e~$2$ (respectivement $3$) et que $\Aut(S)$ fixe un point de $S$ qui n'est pas sur une courbe exceptionnelle, l'\'eclatement de ce point conjugue $\Aut(S)$ \`a un sous-groupe de $\Aut(S')$, pour une surface $S'$ de del Pezzo de degr\'e~$1$ (respectivement $2$). Le dernier cas est celui o\`u le degr\'e de $S$ est $2$ ou $3$ et o\`u tous les points fixes de l'action de $\Aut(S)$ sur $S$ sont sur les courbes exceptionnelles, qui fait partie de la liste du th\'eor\`eme.
\qed\end{proof}
\subsection{Le cas des automorphismes de fibr\'es en coniques}
Rappelons tout d'abord le r\'esultat suivant, probablement connu du sp\'ecialiste.
\begin{lemm}\label{Lem:1ou2fibresSing}
Soit $(S,\pi)$ un fibr\'e en coniques et supposons que le nombre de fibres singuli\`eres est $1$ ou $2$ et que le couple $(\Aut(S,\pi),S)$ est minimal. Alors, $S$ est la surface de del Pezzo de degr\'e~$6$ (en particulier le nombre de fibres singuli\`eres est~$2$).
\end{lemm}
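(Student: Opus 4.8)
Le principe directeur est qu'un couple minimal ne peut poss\'eder aucune $(-1)$-courbe globalement invariante par $\Aut(S,\pi)$: on pourrait sinon la contracter de mani\`ere \'equivariante et obtenir un morphisme birationnel vers une surface lisse qui ne serait pas un isomorphisme. Je commencerais par rappeler que, pour un fibr\'e en coniques \`a $k$ fibres singuli\`eres, on a $(K_S)^2=8-k$ et donc $\rkPic{S}=2+k$, avec ici $k\in\{1,2\}$. En choisissant une section $s_0$ d'auto-intersection minimale $-r$ et en contractant dans chaque fibre singuli\`ere la composante ne rencontrant pas $s_0$, on obtient un morphisme birationnel de fibr\'es en coniques $\eta_r:S\rightarrow \mathbb{F}_r$, comme dans la preuve de la proposition~\ref{Prp:estdPezzofibres}; la courbe $s_0$ devient alors la section d'auto-intersection $-r$ de $\mathbb{F}_r$.

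L'\'etape centrale consiste \`a borner $r$. Sur $\mathbb{F}_r$ avec $r\geq 1$, toute section distincte de la section exceptionnelle a une auto-intersection $\geq r$; en relevant \`a $S$, o\`u l'on \'eclate $k$ points, toute section distincte de $s_0$ a une auto-intersection $\geq r-k$. D\`es que $k<2r$, la section $s_0$ est donc l'unique section d'auto-intersection minimale, et par cons\'equent pr\'eserv\'ee par $\Aut(S,\pi)$. Or $s_0$ rencontre exactement une composante de chaque fibre singuli\`ere, et ces $k$ composantes sont des $(-1)$-courbes deux \`a deux disjointes (situ\'ees dans des fibres distinctes); leur r\'eunion est un ensemble invariant que l'on peut contracter de mani\`ere \'equivariante, en contradiction avec la minimalit\'e. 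Ceci exclut $r\geq 1$ lorsque $k=1$ et $r\geq 2$ lorsque $k=2$.

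Il reste \`a traiter les petites valeurs de $r$. Si $k=1$, on est forc\'ement dans le cas $r=0$, c'est-\`a-dire que $S$ est l'\'eclatement de $\mathbb{F}_0$ en un point, autrement dit la surface de del Pezzo de degr\'e~$7$; sur celle-ci la courbe exceptionnelle est l'unique $(-1)$-courbe rencontrant deux $(-1)$-courbes disjointes, elle est donc invariante et contractible, ce qui contredit \`a nouveau la minimalit\'e. Le cas $k=1$ est ainsi impossible. Si $k=2$, il reste $r\in\{0,1\}$, et dans les deux cas $S$ est l'\'eclatement de $\Pn$ en trois points distincts ($\rkPic{S}=4$, $(K_S)^2=6$). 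La classe $f$ d'une fibre de $\pi$ v\'erifie $f^2=0$ et $-K_S\cdot f=2$, ce qui force $f=H-E_i$ (avec $H$ la classe d'une droite et $E_1,E_2,E_3$ les diviseurs exceptionnels). Si les trois points \'etaient align\'es, la transform\'ee stricte de la droite les contenant serait une $(-2)$-courbe $N$ avec $N\cdot f=0$, donc contenue dans une fibre; ceci est impossible puisque chaque fibre singuli\`ere est l'union transverse de deux $(-1)$-courbes. Les trois points sont donc en position g\'en\'erale et $S$ est la surface de del Pezzo de degr\'e~$6$, qui poss\`ede effectivement deux fibres singuli\`eres.

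La principale difficult\'e me semble r\'esider dans le contr\^ole pr\'ecis des $(-1)$-courbes et des sections: il faut v\'erifier soigneusement l'unicit\'e de la section minimale d\`es que $k<2r$, puis s'assurer que la configuration des courbes contractibles invariantes est bien incompatible avec la minimalit\'e dans chaque cas; une fois ces points \'etablis, le reste se ram\`ene \`a de simples calculs d'intersection.
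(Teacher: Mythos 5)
Your proof is correct, and it rests on the same basic skeleton as the paper's: pick a section $s_0$ of minimal self-intersection $-r$, contract one component in each singular fiber to reach $\mathbb{F}_r$, and play self-intersections of sections against minimality of the pair. But minimality enters in a genuinely different way. The paper uses it \emph{positively}: no invariant disjoint union of fiber components may exist, so some automorphism exchanges the two components of every singular fiber; hence there is a second section of self-intersection $-r$, whose image in $\mathbb{F}_r$ is a section other than the exceptional one, giving $-r+l\geq r$ with $1\leq l\leq k\leq 2$. Combined with $r>0$ this yields at once $r=1$, $l=k=2$, and $S$ is the blow-up of $\mathbb{F}_1$ at two points on distinct fibers off the exceptional section, i.e.\ the del Pezzo surface of degree $6$ --- no further case analysis. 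You use minimality \emph{contrapositively}: if $k<2r$ the minimal section is unique, hence invariant, and the $k$ components meeting it form an invariant disjoint set of $(-1)$-curves, contradicting minimality. This gives the same inequality $k\geq 2r$, but you then need an endgame: excluding the degree-$7$ surface for $k=1$ (via the intrinsically characterized middle $(-1)$-curve), and ruling out three collinear points for $k=2$ via the computation $f=H-E_i$ and the impossibility of a $(-2)$-curve lying in a fiber. Both endgame arguments are valid, and the fiber-class trick is a nice uniform way to see the points are in general position; the price of your route is simply that it is longer where the paper's inequality closes everything at once.

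One point to patch: your case analysis ($r\geq 1$ excluded hence $r=0$ when $k=1$; $r\in\{0,1\}$ when $k=2$) implicitly assumes $r\geq 0$, and more basically that the minimum of the self-intersections of sections exists. The paper flags exactly this (\emph{on v\'erifie que $r$ existe et $r>0$}). Both facts follow from your own contraction: writing $S$ as a blow-up of some $\mathbb{F}_m$ at $k$ points, the strict transform of a section of $\mathbb{F}_m$ of minimal self-intersection (chosen through a blown-up point when $m=0$) is a section of $S$ of self-intersection $\leq -1$; so self-intersections of sections are bounded below and in fact $r\geq 1$ always. This makes your $r=0$ cases vacuous --- though the arguments you give for them are themselves correct, so this is a one-line repair rather than a flaw in the strategy.
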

\begin{proof}
Notons $-r$ l'auto-intersection la plus petite de toutes les sections de $\pi$. En contractant une composante dans chaque fibre singuli\`ere, on v\'erifie que $r$ existe et $r>0$. 
La minimalit\'e du couple $(\Aut(S,\pi),S)$ implique qu'il existe un automorphisme qui \'echange les composantes de chaque fibre singuli\`ere et donc qu'il existe au moins $2$ sections d'auto-intersection~$-r$; on note $s_1$ et $s_2$ ces deux sections. En contractant dans chaque fibre singuli\`ere la composante qui ne touche pas $s_1$ on obtient un morphisme birationnel de fibr\'es en coniques $\pi:S\rightarrow \mathbb{F}_r$ qui envoie $s_1$ sur la section exceptionnelle et envoie $s_2$ sur une section d'auto-intersection~$-r+l$, o\`u $l\in\{1,2\}$. Ceci implique que $-r+l\geq r$ et donc que $r=1$ et $l=2$. La surface $S$ est donc obtenue en \'eclatant deux points de $\mathbb{F}_1$ dans des fibres diff\'erentes et pas sur la section exceptionnelle, elle est alors isomorphe \`a la surface de del Pezzo de degr\'e~$6$.
\qed\end{proof}

Nous pouvons maintenant d\'emontrer la proposition cl\'e de cette section, en utilisant la structure alg\'ebrique du groupe $\Jon=\PGL(2,\C(x))\rtimes \PGL(2,\C)$.
\begin{prop}\label{Prp:CleJonq}
Soit $G$ un sous-groupe alg\'ebrique du groupe de de Jonqui\`eres et notons $G'\subset G$ et $H\subset \PGL(2,\C)$ le noyau et l'image de l'action de $G$ sur la base de la fibration. On suppose que $H$ est fini, alors:
\begin{enumerate}
\item
Si $G'=\{1\}$, alors $G$ est conjugu\'e -- dans $\Jon$ -- \`a $H$.
\item
Si $G'\cong \Z{2}$ est engendr\'e par une involution dont le d\'eterminant n'est pas trivial, alors 
$G$ normalise un groupe $V\subset \PGL(2,\C(x))$ isomorphe \`a $(\Z{2})^2$ et contenant $G'$.
\end{enumerate}
\end{prop}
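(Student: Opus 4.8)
The common engine behind both parts is descent along the finite Galois extension $\C(x)/K$, where $K=\C(x)^H$. Since $H\subset\PGL(2,\C)=\Aut(\C(x)/\C)$ is finite and faithful on $\C(x)$, Artin's theorem makes $\C(x)/K$ Galois with group $H$, and $K$ is the function field of a curve over $\C$ (the quotient $\mathbb{P}^1/H$). By Tsen's theorem $\mathrm{Br}(K)=0$, so every smooth conic over $K$ — equivalently every $K$-form of $\mathbb{P}^1$ split by $\C(x)$ — is isomorphic to $\mathbb{P}^1_K$ and carries a $K$-rational point. This is the one nontrivial input; the rest is bookkeeping in $\Jon=\PGL(2,\C(x))\rtimes\PGL(2,\C)$, where the right factor acts on $\C(x)$ precisely by the Galois action of $H$.

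For (1), since $G'=\{1\}$ the projection identifies $G$ with $H$, giving a section $s\colon H\to G$, $s(h)=(c(h),h)$ with $c(h)\in\PGL(2,\C(x))$; the homomorphism property of $s$ is exactly the $1$-cocycle relation $c(h_1h_2)=c(h_1)\cdot{}^{h_1}c(h_2)$. A direct computation shows that conjugating $G$ by $(b,1)$ replaces $c(h)$ by $b\,c(h)\,{}^hb^{-1}$, so $G$ is conjugate to the standard copy $H=\{(1,h)\}$ if and only if $c$ is a coboundary, i.e. $[c]$ is trivial in $H^1(H,\PGL(2,\C(x)))$. This pointed set classifies $K$-forms of $\mathbb{P}^1$ split by $\C(x)$, all trivial by the previous paragraph. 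Concretely, $G$ acts on the generic fibre $\mathbb{P}^1_{\C(x)}$ by $\C(x)$-semilinear automorphisms through $H$; this descent datum defines a conic over $K$, and a $K$-point of it produces the required $b$. Hence $G$ is conjugate in $\Jon$ to $H$.

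For (2), $G$ is finite and $\sigma$ is central (as $G/G'\cong H$ acts trivially on $G'\cong\Z{2}$). View each involution of $\PGL(2,\C(x))$ as a point of $\mathbb{P}(\mathfrak{sl}_2(\C(x)))\cong\mathbb{P}^2_{\C(x)}$, the involutions being the points off the conic $\{\det=0\}$. By Lemma~\ref{Lem:Nsigma} the involutions commuting with $\sigma$ — those lying in the normalizer $N_\sigma$ — are $\sigma$ together with the reflections of $N_\sigma\setminus N_\sigma^0$, and these reflections sweep out a line $\ell_\sigma\cong\mathbb{P}^1_{\C(x)}$ (away from its finitely many degenerate members). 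Because $\sigma$ is $G$-fixed, $G$ preserves $\ell_\sigma$, and since $\sigma$ acts trivially on $\ell_\sigma$ by conjugation, $G$ acts on $\ell_\sigma$ through $H$ by $\C(x)$-semilinear automorphisms. This descends $\ell_\sigma$ to a smooth conic over $K$; by Tsen it has a $K$-point, which we may choose away from the degenerate members. Such a point is an $H$-invariant reflection $\tau$, and $V=\langle\sigma,\tau\rangle\cong(\Z{2})^2$ is then centralized, hence normalized, by $G$, and contains $G'$.

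The hard part is genuinely the input of Tsen's theorem (triviality of $\mathrm{Br}(K)$): once it is available, both assertions reduce to the statement that a naturally occurring $K$-form of $\mathbb{P}^1$ is split. The remaining care is organizational — checking that the $H$-action on the generic fibre (respectively on $\ell_\sigma$) is a genuine semilinear descent datum, so that the quotient is a smooth conic over $K$, and, in (2), using Lemma~\ref{Lem:Nsigma} to see that the commuting involutions really form a $\mathbb{P}^1$ and that a rational point avoiding the two degenerate members yields an honest involution distinct from $\sigma$.
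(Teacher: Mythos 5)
Your proof is correct, and for assertion (2) it takes a genuinely different route from the paper's. For (1) the two arguments essentially coincide: both read the section $h\mapsto(c(h),h)$ as a $1$-cocycle and conclude from the vanishing of $H^1(H,\PGL(2,\C(x)))$; the paper obtains this vanishing from Tsen's theorem via the exact sequence $1\to\C(x)^{*}\to\GL(2,\C(x))\to\PGL(2,\C(x))\to 1$ (citing Serre), while you obtain it by reading the cocycle as a Galois descent datum for $\C(x)/K$, $K=\C(x)^H$, and applying Tsen to the conic over $K$ --- same substance, different packaging. For (2), by contrast, the paper performs an explicit two-step nonabelian cocycle computation inside $N_{\sigma}=N_{\sigma}^0\rtimes \Z{2}$: it extracts from each $h\in H$ a scalar $\mu_h$ with $\mu_h^2=f/h(f)$, shows $h\mapsto \mu_h$ is a cocycle and kills it by Hilbert 90, so that after a diagonal conjugation $f$ is $H$-invariant; it then shows $h\mapsto\rho_h=\alpha^2\in N_{\sigma}^0$ is a cocycle and trivializes it using $1\to\C(x)^{*}\to\C(x)[\sqrt{f}]^{*}\to N_{\sigma}^0\to 1$ together with Hilbert 90 and $H^2(H,\C(x)^{*})=\{1\}$ (Tsen); the resulting $\gamma$ yields the commuting involution $(\gamma,-1)$ and hence $V$. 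You replace all of this by a single geometric descent: by Lemma~\ref{Lem:Nsigma} the involutions commuting with $\sigma$, other than $\sigma$ itself, are exactly the projectivized non-identity coset of $N_{\sigma}$, a line $\ell_{\sigma}\cong\mathbb{P}^1_{\C(x)}$ in $\mathbb{P}(\mathfrak{sl}_2(\C(x)))$; centrality of $\sigma$ in $G$ forces $G$ to act on $\ell_{\sigma}$ through $H$, semilinearly, i.e.\ by a descent datum, and a $K$-point of the descended conic (Tsen once more) is precisely a $G$-invariant involution $\tau$, giving $V=\langle\sigma,\tau\rangle$. Your route is shorter and more conceptual: it dispenses with the normalization of $f$ and the matrix manipulations, merging the paper's two cohomological vanishings into one application of Tsen over $K$, and it makes visible that $V$ is in fact centralized, not merely normalized, by $G$; the paper's computation, in exchange, is elementary and exhibits the conjugating elements explicitly. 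Two small points you could tighten: since $f$ is not a square, $-a^2+b^2f\neq 0$ for every $(a:b)\in\mathbb{P}^1(\C(x))$, so no $\C(x)$-point of $\ell_{\sigma}$ is degenerate and your caveat about avoiding degenerate members is vacuous; and $G$ preserves $\ell_{\sigma}$ simply because twisted conjugation by $G$ fixes $\sigma$, hence permutes the involutions commuting with $\sigma$, and a semilinear projective transformation preserving the rational points of a line preserves the line.
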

\begin{proof}
Remarquons tout d'abord que le corps $\C(x)$ a la propri\'et\'e $C_1$  par le th\'eor\`eme de Tsen et donc que pour tout groupe fini $H$, les ensembles $H^1(H,\GL(2,\C(x)))$ et $H^2(H,\C(x)^{*})$ sont r\'eduits \`a un \'el\'ement (voir \cite{bib:Serre}, chapitre X, propositions 3 et 11). Ceci implique donc que $H^1(H,\PGL(2,\C(x))=\{1\}$.

Supposons que $G'=\{1\}$, ce qui implique qu'il existe une section $H\rightarrow G\subset \Jon$. La diff\'erence de cette section avec la section canonique donn\'ee par l'inclusion de $G$ dans $\Jon$ repr\'esente un co-cycle. Le fait que $H^1(H,\PGL(2,\C(x))=\{1\}$ implique que ce co-cycle est conjugu\'e au co-cycle trivial, i.e. que $G$ est conjugu\'e \`a $H$, ce qui d\'emontre l'assertion (1).

Il reste \`a prouver l'assertion (2). Notons $\sigma$ l'\'el\'ement d'ordre $2$ de $G'$. Apr\`es conjugaison par un \'el\'ement de $\PGL(2,\C(x))$, on peut supposer que $\sigma=\left(\begin{array}{cc} 0& f\\ 1 & 0\end{array}\right)$, pour un certain \'el\'ement $f\in \C(x)^{*}$. De plus, l'hypoth\`ese sur le d\'eterminant de $\sigma$ implique que $f$ n'est pas un carr\'e.

Notons $N_{\sigma}$ le normalisateur de $\sigma$ dans le groupe $\PGL(2,\C(x))$ et rappelons (lemme~\ref{Lem:Nsigma}) que $N_{\sigma}=N_{\sigma}^0\rtimes \Z{2}$, o\`u $N_{\sigma}^0$ est isomorphe \`a $\C(x)[\sqrt{f}]^{*}/\C(x)^{*}$ et $\Z{2}$ agit sur $N_{\sigma}^{0}$ en envoyant $[\rho]=[a+b\sqrt{f}]$ sur $[\rho^{-1}]=[a-b\sqrt{f}]$. 

D\'ecrivons maintenant une construction qui associe \`a tout \'el\'ement $h\in H$ un \'el\'ement $\rho_h\in N_{\sigma}^0$ et un \'el\'ement $\mu_h\in \C(x)^{*}$. La pr\'eimage de $h$ dans $G$ est constitu\'ee de deux \'el\'ements $(\gamma,h),(\sigma\gamma,h)\in \Jon$. Comme $\sigma$ est dans le centre de $G$, on a $\gamma h(\sigma)\gamma^{-1}=\sigma$. L'\'el\'ement $h(\sigma)=\left(\begin{array}{cc} 0& h(f)\\ 1 & 0\end{array}\right)$ \'etant conjugu\'e \`a $\sigma=\left(\begin{array}{cc} 0& f\\ 1 & 0\end{array}\right)$ dans $\PGL(2,\C(x))$, il existe $\mu \in \C(x)^{*}$ tel que $\mu^2=f/h(f)$ (Lemme~\ref{Lem:PGL2Cx}). On \'ecrit $\beta=\left(\begin{array}{cc} \mu& 0\\ 0 & 1\end{array}\right)$, ce qui donne $\beta h(\sigma)\beta^{-1}=\sigma$. Alors, $\gamma$ s'\'ecrit $\gamma=\alpha\beta$, avec $\alpha\in N_{\sigma}$. Choisissons le signe de $\mu$ de telle sorte que $\alpha\in N_{\sigma}^0$. Nous d\'esignons alors respectivement par $\rho_h$ et $\mu_h$ les \'el\'ements $\alpha^2\in N_{\sigma}^0$ et $\mu\in\C(x)^{*}$, en remarquant que ces deux \'el\'ements sont uniquement d\'etermin\'es par $h$, puisque $\sigma\alpha\in N_{\sigma}^0$ et $(\sigma\alpha)^2=\alpha^2$.

\'Etudions maintenant plus pr\'ecis\'ement les deux application $\rho,\mu$ qui envoient $h$ respectivement sur $\rho_h$ et $\mu_h$.
\'Etant donn\'es deux \'el\'ements $h_1,h_2\in H$, posons $h_3=h_1h_2$ et choisissons comme avant $(\alpha_i\beta_i,h_i)\in G$ pour $i=1,2,3$, avec $\alpha_i\in N_{\sigma}^0$ et $\beta_i=\left(\begin{array}{cc} \mu_i& 0\\ 0 & 1\end{array}\right)$, $\mu_i^2=f/h_i(f)$. On peut choisir $\alpha_3\beta_3$ de telle sorte que $(\alpha_1\beta_1,h_1)\cdot (\alpha_2\beta_2,h_2)=(\alpha_3\beta_3,h_3)$, ce qui implique que $\alpha_3=\alpha_1\beta_1\cdot h_1(\alpha_2\beta_2)\cdot (\beta_3)^{-1}$. En \'ecrivant $\alpha_2=\left(\begin{array}{cc}a & bf \\b  &  a\end{array}\right)\in N_{\sigma}^0$, calculons:
\[\beta_1 h_1(\alpha_2)(\beta_1)^{-1}=\left(\begin{array}{cc}\mu_1\cdot h_1(a) &  (\mu_1)^2\cdot h_1(bf) \\ h_1(b)  &  \mu_1 \cdot h_1(a)\end{array}\right)=\left(\begin{array}{cc}\mu_1\cdot h_1(a) & h_1(b)\cdot f \\ h(b)  & \mu_1 \cdot h_1(a)\end{array}\right);\]
ceci montre que $\beta_1 h_1(\alpha_2)(\beta_1)^{-1}$  appartient \`a $N_{\sigma}^0$. De la relation
 \[\alpha_3=\alpha_1\cdot(\beta_1 h_1(\alpha_2)(\beta_1)^{-1}) \cdot \beta_1\cdot h_1(\beta_2)\cdot (\beta_3)^{-1}\] on d\'eduit que l'\'el\'ement diagonal $\beta_1\cdot h_1(\beta_2)\cdot (\beta_3)^{-1}$ appartient aussi \`a $N_{\sigma}^0$ et est alors trivial, d'o\`u l'\'egalit\'e $\mu_3=\mu_1\cdot h_1(\mu_2)$ qui montre que l'application  $\mu:H\rightarrow \C(x)^{*}$ qui envoie $h$ sur $\mu_h$ est un co-cycle. Du fait que $H^1(H,\C(x)^{*})=\{1\}$, il existe $\nu\in \C(x)^{*}$ tel que $\mu_h=\nu/h(\nu)$ pour tout $h\in H$; l'\'el\'ement $f/\nu^2$ est alors invariant par $H$. Apr\`es avoir conjugu\'e $G$ par $\left(\begin{array}{cc} 1& 0\\ 0 & \nu\end{array}\right)$ \emph{on peut donc supposer que $\mu_h=1$ pour tout $h$ -- qui \'equivaut \`a dire que $f$ est invariant par $H$}. En reprenant $h_1,h_2,h_3$ comme avant, on trouve alors (du fait que $\beta_i=1$ pour tout $i$) l'\'egalit\'e $\alpha_3=\alpha_1\cdot h_1(\alpha_2)$, qui montre que l'application  $\rho:H\rightarrow N_{\sigma}^0$ qui envoie $h$ sur $\rho_h$ est maintenant un co-cycle.
 La suite exacte $H$-\'equivariante
 \[1 \rightarrow \C(x)^{*}\rightarrow \C(x)[\sqrt{f}]^{*}\rightarrow N_{\sigma}^0\rightarrow 1,\]
et les \'egalit\'es $H^1(H,\C(x)[\sqrt{f}]^{*})=\{1\}$ et $H^2(H,\C(x)^{*})=\{1\}$ impliquent la trivialit\'e de $H^1(H,N_{\sigma}^0)$. Il existe donc $\gamma \in N_{\sigma}^0$ tel que $\rho_h=\gamma\cdot h(\gamma)^{-1}$ pour tout $h\in H$. Il reste \`a voir que cette condition implique que l'involution $(\gamma,-1)\in N_{\sigma}$ commute avec $G$, ce qui montre que groupe $V\subset \PGL(2,\C(x))$ engendr\'e par $\sigma$ et $(\gamma,-1)$  est normalis\'e par $G$ et qui donne le r\'esultat escompt\'e (car $V$ est  isomorphe \`a $(\Z{2})^2$).

Soit $h\in H$ et choisissons $(\alpha\beta,h)\in G$, avec $\alpha\in N_{\sigma}^0$ et $\beta=\left(\begin{array}{cc} \mu& 0\\ 0 & 1\end{array}\right)$, $\mu^2=f/h(f)$, comme pr\'ec\'edemment. Rappelons que $\mu=1$ et donc que $\beta=1$; alors, la conjugaison de $(\gamma,-1)\in N_{\sigma}$ par $(\alpha, h)$ est le produit $(\alpha,1)\cdot (h(\gamma), -1)\cdot (\alpha^{-1},1)$ dans le groupe $N_{\sigma}=N_{\sigma}^0\rtimes \Z{2}$. Ce produit est \'egal \`a $(\alpha,1)\cdot (h(\gamma)\alpha, -1)=(h(\gamma)\alpha^2,-1)=(h(\gamma)\rho_h,-1)=(\gamma,-1)$, ce qui conclut la preuve.\qed\end{proof}

\begin{prop}\label{Prp:ContenuFibreConiques}
Soit  $\pi:S\rightarrow \mathbb{P}^1$ un fibr\'e en coniques, tel que le couple $(\Aut(S,\pi),S)$ soit minimal.
Alors, le groupe alg\'ebrique $\Aut(S,\pi)$ est birationnellement conjugu\'e \`a un sous-groupe d'un des groupes d\'ecrits dans le th\'eor\`eme~\ref{Theo:Class}.
\end{prop}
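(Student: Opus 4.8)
The plan is to use the realization of $\Aut(S,\pi)$ as an algebraic subgroup of the de Jonquières group $\Jon=\PGL(2,\C(x))\rtimes\PGL(2,\C)$ and to argue by cases on the kernel $G'=\Aut(S/\mathbb{P}^1)$ and the image $H\subset\PGL(2,\C)$ of its action on the base. I may assume from the outset that $S$ is \emph{not} a del Pezzo surface: if it were, then $\Aut(S,\pi)\subset\Aut(S)$ and Proposition~\ref{Prp:ContenuDelPezzo} already shows that $\Aut(S)$, hence its subgroup $\Aut(S,\pi)$, is birationally conjugate to a subgroup of a group of Theorem~\ref{Theo:Class}. If moreover $(S,\pi)$ has no singular fibre, then $S$ is a Hirzebruch surface; as $\mathbb{F}_0$ and $\mathbb{F}_1$ are del Pezzo this forces $S=\mathbb{F}_n$ with $n\geq 2$, which carries a unique conic bundle structure, so $\Aut(S,\pi)=\Aut(\mathbb{F}_n)$ is one of the listed groups. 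I therefore assume that $(S,\pi)$ has at least one singular fibre.

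First I would note that minimality forces some automorphism to exchange the two components of a singular fibre: otherwise the $G$-orbit of a component of a singular fibre would consist of pairwise disjoint $(-1)$-curves and could be contracted $G$-equivariantly, contradicting minimality. This is exactly the hypothesis of Lemma~\ref{Lem:ContenuDansExc}. If it yields that $S$ is exceptional, then $S$ has an even number $2n$ of singular fibres; since two singular fibres would make $S=S_6$ (Proposition~\ref{Prp:AutSpiAutSfibE}) and $S$ is not del Pezzo, we get $2n\geq 4$ and $\Aut(S,\pi)$ is a listed group. Otherwise $\Aut(S/\mathbb{P}^1)\cong(\Z{2})^r$ with $r\in\{0,1,2\}$; Lemma~\ref{Lem:1ou2fibresSing} rules out one or two singular fibres (again $S_6$), so there are at least three, and since $H$ then preserves a finite set of at least three points it is finite. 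If $r=2$, each of the three involutions has non-trivial determinant --- a trivial one would be diagonalizable, would preserve every component of every singular fibre, and would act trivially on $\Pic{S}$, contradicting the non-exceptional branch of Lemma~\ref{Lem:ContenuDansExc} --- so each fixes an irreducible double cover of $\mathbb{P}^1$ and $(S,\pi)$ is itself a $(\Z{2})^2$-conic bundle, a listed group. If $r=0$, Proposition~\ref{Prp:CleJonq}(1) conjugates $\Aut(S,\pi)$ in $\Jon$ onto $H\subset\PGL(2,\C)$, acting by $(x,y)\mapsto(h(x),y)$ inside $\Aut(\mathbb{P}^1\times\mathbb{P}^1)$, again a listed group.

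The delicate case is $r=1$. The generator $\sigma$ of $\Aut(S/\mathbb{P}^1)$ has non-trivial determinant by the same $\Pic$-argument, so Proposition~\ref{Prp:CleJonq}(2) produces a group $V\cong(\Z{2})^2\subset\PGL(2,\C(x))$ normalized by $G=\Aut(S,\pi)$ and containing $\sigma$. I would then pass to $\widetilde{G}=G\cdot V$, which is an algebraic subgroup of $\Jon$ with base-kernel exactly $V$ and with the same image $H$; by the correspondence between algebraic subgroups of $\Jon$ and conic bundle automorphism groups, $\widetilde{G}$ is conjugate to a subgroup of $\Aut(S',\pi')$ for a conic bundle $(S',\pi')$ with $V\subseteq\Aut(S'/\mathbb{P}^1)$ realized through Lemma~\ref{Lem:BeaV} and the bijection $\xi$. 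The three involutions of $V$ have determinants $d_1,d_2,d_3$ with $d_1d_2d_3=[1]$ and not all trivial (as $\sigma$ has non-trivial determinant). If all three are non-trivial, $(S',\pi')$ is a $(\Z{2})^2$-conic bundle; if exactly one, say $d_3$, is trivial, the corresponding involution acts trivially on $\Pic{S'}$ and Lemma~\ref{Lem:ContenuDansExc} makes $(S',\pi')$ exceptional. In either case $\Aut(S',\pi')$ is a listed group (or $S'$ is del Pezzo, handled as in the first paragraph), and $G\subset\widetilde{G}$ is birationally conjugate to a subgroup of it.

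The main obstacle I expect is this $r=1$ step. One must verify that $\widetilde{G}=G\cdot V$ is genuinely algebraic with base-kernel precisely $V$, and that it embeds into the automorphism group of the conic bundle attached to $V$ without acquiring extra fibrewise symmetry; this is where the cohomological input of Proposition~\ref{Prp:CleJonq} and the classification of $(\Z{2})^2$-subgroups of $\PGL(2,\C(x))$ via determinants (Lemma~\ref{Lem:BeaV}) are essential. The supporting fact that ``acts trivially on $\Pic$'' coincides with ``trivial determinant / diagonalizable involution'', used repeatedly above, rests on the lemma following Definition~\ref{Def:xi} together with the component-preservation argument, and should be stated carefully to make the determinant dichotomy airtight.
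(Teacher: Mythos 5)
Your proof is correct and follows essentially the same route as the paper's: reduction to the de Jonqui\`eres group, the exceptional/$(\Z{2})^r$ dichotomy of Lemma~\ref{Lem:ContenuDansExc}, Lemma~\ref{Lem:1ou2fibresSing} to exclude one or two singular fibres, and Proposition~\ref{Prp:CleJonq} with the case division on $\Aut(S/\mathbb{P}^1)\cong(\Z{2})^r$, including the enlargement $G\cdot V$ when $r=1$. The only differences are organisational: you dispose of all del Pezzo surfaces at the outset via Proposition~\ref{Prp:ContenuDelPezzo}, and you spell out (through the determinants of the three involutions of $V$, together with Lemma~\ref{Lem:ContenuDansExc} applied to the regularized bundle) the endgame of the case $r=1$, which the paper leaves implicit after noting that the finite group $G\cdot V$ acts on a conic bundle.
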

\begin{proof}
Supposons tout d'abord que $\pi$ est une fibration en droites. La surface $S$ est donc une surface de Hirzebruch. Si c'est $\mathbb{F}_1$, alors $\Aut(S,\pi)$ est conjugu\'e \`a un sous-groupe de $\Aut(\Pn)$, en contractant la section exceptionnelle. Sinon, $\Aut(\mathbb{F}_n)$ est un des groupes de la liste, ce qui termine la d\'emonstration.

Supposons maintenant que le nombre de fibres singuli\`eres est $1$ ou $2$. Le lemme~\ref{Lem:1ou2fibresSing} implique que $S$ est la surface de del Pezzo de degr\'e~$6$, dont le groupe des automorphismes est pr\'esent dans la liste.

Supposons alors que le nombre de fibres singuli\`eres est au minimum $3$ et notons $H\subset \PGL(2,\C)$ l'image de l'action de $\Aut(S,\pi)$ sur la base de la fibration. Le fait que le nombre de fibres singuli\`eres soit au moins $3$ implique que $H$ est fini. 
Si $\Aut(S/\mathbb{P}^1)$ contient un \'el\'ement non-trivial qui agit trivialement sur $\Pic{S}$, alors $(S,\pi)$ est un fibr\'e exceptionnel (lemme~\ref{Lem:ContenuDansExc}) et a au minimum $4$ fibres singuli\`eres (le nombre de fibres singuli\`eres \'etant pair); le groupe $\Aut(S,\pi)$  fait donc partie de la liste.

On peut supposer que $\Aut(S/\mathbb{P}^1)$ ne contient pas d'\'el\'ement non-trivial qui agit trivialement sur $\Pic{S}$. Le groupe $\Aut(S/\mathbb{P}^1)$ est donc isomorphe \`a $(\Z{2})^r$ pour $r=0,1,2$ (lemme~\ref{Lem:ContenuDansExc}). 
Si $r=0$, alors $\Aut(S,\pi)$ est conjugu\'e \`a $H$ (proposition~\ref{Prp:CleJonq}); comme $H \subset \PGL(2,\C)\subset \Aut(\mathbb{P}^1\times\mathbb{P}^1)$, on a termin\'e.

Le groupe $\Aut(S/\mathbb{P}^1)$ contient donc une ou trois involutions. Chacune de ces involutions agissant non-trivialement sur $\Pic{S}$, elle \'echange les composantes d'un certain nombre de fibres singuli\`eres et ses points fixes contiennent donc une seul courbe irr\'eductible. Ceci implique que le d\'eterminant de chaque involution de $\Aut(S/\mathbb{P}^1)\subset \PGL(2,\C(x))$ est non-trivial.

Si $\Aut(S/\mathbb{P}^1)\cong \Z{2}$, la proposition~\ref{Prp:CleJonq} nous dit que $\Aut(S,\pi)$ normalise un sous-groupe $V\subset \PGL(2,\C(x))$, isomorphe \`a $(\Z{2})^2$, qui contient $\Aut(S/\mathbb{P}^1)$. Alors, $\Aut(S,\pi)$ est contenu dans le groupe engendr\'e par $\Aut(S,\pi)$ et $V$, qui est fini et agit alors sur un fibr\'e en coniques.

Il reste donc le cas o\`u $\Aut(S/\mathbb{P}^1)\cong (\Z{2})^2$ et o\`u l'application $\det: \Aut(S/\mathbb{P}^1)\rightarrow \C(x)^{*}/(\C(x)^{*})^2$ est injective. Le fibr\'e en coniques $(S,\pi)$ est donc un $(\Z{2})^2$-fibr\'e en coniques. Si la surface $S$ est de del Pezzo, alors $\Aut(S,\pi)\subset \Aut(S)$ et le r\'esultat suit de la proposition~\ref{Prp:ContenuDelPezzo}. Sinon, le groupe $\Aut(S,\pi)$ est un des groupes de la liste.
\qed\end{proof}
\subsection{Le r\'esultat}
\begin{prop}\label{Prp:Contenu}
Soit $G$ un sous-groupe alg\'ebrique du groupe de Cremona.  Alors, $G$ est conjugu\'e \`a un sous-groupe d'un des groupes d\'ecrits dans  le th\'eor\`eme~\ref{Theo:Class}.\end{prop}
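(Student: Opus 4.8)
Le plan est d'assembler les r\'esultats d\'ej\`a \'etablis dans cette section, selon le sch\'ema annonc\'e au d\'ebut de la section~\ref{Sec:Contenu}. Je commencerais par conjuguer $G$ --- \`a l'aide des r\'esultats rappel\'es au d\'ebut de la section~\ref{Sec:Res} (th\'eor\`emes de Weil et Rosenlicht, compl\'etion \'equivariante de Sumihiro, r\'esolution \'equivariante des singularit\'es) --- \`a un sous-groupe de $\Aut(X)$, o\`u $X$ est une surface projective lisse rationnelle. Le groupe $G$ \'etant alg\'ebrique, son action sur $\Pic{X}$ est finie; on peut donc ex\'ecuter un programme minimal $G$-\'equivariant et obtenir un morphisme birationnel $G$-\'equivariant $X\rightarrow S$ tel que le couple $(G,S)$ soit minimal au sens de la d\'efinition~\ref{Def:PairesGequi}.

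J'appliquerais ensuite la proposition~\ref{Prp:Isk} au couple minimal $(G,S)$: elle fournit une $G$-fibration de Mori $\pi:S\rightarrow Y$ et s\'epare l'argument en deux cas. Dans le premier, $Y$ est un point et $S$ est une surface de del Pezzo. Comme $G\subset \Aut(S)$, la proposition~\ref{Prp:ContenuDelPezzo} montre imm\'ediatement que $\Aut(S)$ --- et donc $G$ --- est birationnellement conjugu\'e \`a un sous-groupe de l'un des groupes du th\'eor\`eme~\ref{Theo:Class}, ce qui conclut ce cas.

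Dans le second cas, $Y\cong\mathbb{P}^1$, $\pi$ est un fibr\'e en coniques et $G$ pr\'eserve la fibration (par d\'efinition d'une $G$-fibration de Mori), de sorte que $G\subset \Aut(S,\pi)$. Le point \`a ne pas n\'egliger est d'observer que la minimalit\'e de $(G,S)$ entra\^ine celle de $(\Aut(S,\pi),S)$: tout morphisme birationnel $\Aut(S,\pi)$-\'equivariant $S\rightarrow S'$ est en particulier $G$-\'equivariant (car $G\subset \Aut(S,\pi)$), donc un isomorphisme. La proposition~\ref{Prp:ContenuFibreConiques} s'applique alors au fibr\'e $(S,\pi)$ et montre que $\Aut(S,\pi)$, et donc $G$, est birationnellement conjugu\'e \`a un sous-groupe d'un des groupes de la liste.

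La d\'emonstration ne pr\'esente ainsi aucune difficult\'e s\'erieuse: tout le travail a \'et\'e accompli en amont, dans les propositions~\ref{Prp:ContenuDelPezzo} et \ref{Prp:ContenuFibreConiques} (cette derni\`ere reposant elle-m\^eme sur la proposition~\ref{Prp:CleJonq} et le lemme~\ref{Lem:1ou2fibresSing}). La seule subtilit\'e de cet assemblage est le sens de l'implication entre minimalit\'es: c'est l'inclusion $G\subset \Aut(S,\pi)$ --- et non l'inverse --- qui permet de passer de la minimalit\'e du petit groupe \`a celle du groupe d'automorphismes complet du fibr\'e.
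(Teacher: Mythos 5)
Your proposal is correct and follows essentially the same route as the paper's own proof: reduce to a minimal pair $(G,S)$ via the results recalled in section~\ref{Sec:Res}, apply the proposition~\ref{Prp:Isk} to split into the del Pezzo case (proposition~\ref{Prp:ContenuDelPezzo}) and the conic bundle case (proposition~\ref{Prp:ContenuFibreConiques}). Your explicit verification that minimality of $(G,S)$ transfers to $(\Aut(S,\pi),S)$ --- needed to invoke proposition~\ref{Prp:ContenuFibreConiques} --- is a detail the paper leaves implicit, and you argue it correctly.
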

\begin{proof}
On peut supposer que $G$ agit sur une surface $S$ (voir \S\ref{Sec:Res}) et assumer de plus que le couple $(G,S)$ est minimal. D'apr\`es la proposition~\ref{Prp:Isk}, ou bien $S$ est une surface de del Pezzo et le r\'esultat suit de la proposition~\ref{Prp:ContenuDelPezzo}, ou bien $G\subset \Aut(S,\pi)$ pour un certain fibr\'e en coniques $\pi:S\rightarrow \mathbb{P}^1$ et le r\'esultat suit de la proposition~\ref{Prp:ContenuFibreConiques}.
\qed\end{proof}
\section{Minimalit\'e des couples}\label{Sec:MinP}
\setcounter{subsection}{1}
Dans cette section, nous d\'emontrons que les couples du th\'eor\`eme~\ref{Theo:Class} (groupe des automorphismes et surface ambiante) sont minimaux. Ceci servira notamment \`a d\'emontrer la superrigidit\'e birationnelle de celles-ci, \`a la section~\ref{Sec:Superrigidite}.

\begin{lemm}\label{Lem:MinG124}
Soit $(S,\pi)$ un fibr\'e en coniques ayant $k$ fibres singuli\`eres, et supposons qu'il existe un groupe $G\subset \Aut(S,\pi)$ d'ordre $2$ ou $4$, agissant trivialement sur la base de la fibration, et tel que $\rkPic{S}^G=2$. Si $k=4$ ou $k\geq 6$, le couple $(G,S)$ est minimal.
\end{lemm}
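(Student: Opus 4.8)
The plan is to show $(G,S)$ admits no nontrivial $G$-equivariant contraction by a short lattice computation in $\Pic{S}$. I would use the classical fact that a smooth $G$-surface fails to be minimal precisely when it carries a $G$-orbit of pairwise disjoint $(-1)$-curves: any nontrivial $G$-equivariant birational morphism $S\rightarrow S'$ onto a smooth surface contracts such an orbit, and conversely such an orbit can be contracted $G$-equivariantly. So I would assume, for contradiction, that there is an orbit $\{E_1,\dots,E_m\}=G\cdot E_1$ of pairwise disjoint $(-1)$-curves, where $m=|G\cdot E_1|$ divides $|G|\in\{2,4\}$, hence $m\in\{1,2,4\}$.

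First I would pin down the invariant Picard lattice. Since $G\subset\Aut(S,\pi)$ and $\rkPic{S}^G=2$, the same argument as in Lemma~\ref{Lem:ZKZf} (intersecting an invariant class with a component of a singular fibre, then with a section) gives $\Pic{S}^G=\mathbb{Z}K_S\oplus\mathbb{Z}f$, where $f$ is the fibre class. The divisor $D=E_1+\cdots+E_m$ is $G$-invariant, so $D=aK_S+bf$ with $a,b\in\mathbb{Z}$. Writing $d=K_S^2=8-k$ and using $K_S\cdot f=-2$, $f^2=0$, together with $E_i^2=E_i\cdot K_S=-1$ and the disjointness of the $E_i$, I obtain $D\cdot K_S=D^2=-m$, that is
\[ad-2b=-m,\qquad a^2d-4ab=-m.\]
Eliminating $b$ yields the single relation $a^2 d=m(1-2a)$.

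Next I would extract sign and divisibility constraints. As $D$ is effective and $f$ is nef, $D\cdot f=-2a\geq 0$, so $a\leq 0$; and $a=0$ forces $m=0$, which is impossible. Writing $a=-t$ with $t\geq 1$, the relation becomes $d\,t^2=m(2t+1)$. Since $m(2t+1)>0$ we must have $d\geq 1$, and because $\gcd(t^2,2t+1)=1$ it follows that $(2t+1)\mid d$; in particular $d$ has the odd divisor $2t+1\geq 3$. Now the hypothesis closes the argument: if $k\geq 6$ then $d=8-k\leq 2$ has no divisor $\geq 3$, while if $k=4$ then $d=4$ has only the divisors $1,2,4$, none of which is an odd number $\geq 3$. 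Either way $(2t+1)\mid d$ is absurd, so no such orbit exists and $(G,S)$ is minimal.

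The main (if modest) obstacle is the opening reduction: justifying that non-minimality furnishes an entire $G$-orbit of pairwise disjoint $(-1)$-curves, not merely some contracted curve; once this standard characterization of $G$-minimal surfaces is invoked, the rest is the elementary numerical check above. It is worth noting that the computation is sharp, which explains the shape of the hypothesis: the excluded values do produce solutions, namely $k=5$ ($d=3$, with $t=1$, $m=1$), $k=3$ ($d=5$, with $t=2$, $m=4$) and $k=2$ ($d=6$, with $t=1$, $m=2$).
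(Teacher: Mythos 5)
Your proof is correct and follows essentially the same path as the paper's: non-minimality yields a $G$-orbit of $l\in\{1,2,4\}$ disjoint $(-1)$-curves whose sum $D=aK_S+bf$ lies in $\Pic{S}^G=\mathbb{Z}K_S\oplus\mathbb{Z}f$, and the equations $D^2=D\cdot K_S=-l$ together with $D\cdot f\geq 0$ force $(K_S)^2\in\{3,5,6,12\}$, i.e.\ $k\in\{2,3,5\}$, excluded by hypothesis. The only (cosmetic) difference is how the arithmetic is finished: the paper enumerates the finitely many solutions $(l,a,b)$ of $a(l+2b)=l$, while you eliminate $b$ and use $\gcd(t^2,2t+1)=1$ to get the divisibility $(2t+1)\mid (K_S)^2$, which rules out the same cases.
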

\begin{proof}Supposons que $k\geq 1$ et que le couple $(G,S)$ ne soit pas minimal. Il existe donc une orbite de $(-1)$-courbes disjointes $C_1,...,C_l$ avec $l=1,2,4$. En notant $D=\sum_{i=1}^l C_i$ on trouve $D^2=D\cdot K_S=-l$. Comme $D$ est invariant par $G$, on a $D=aK_S+bf$ pour $a,b\in \mathbb{Z}$ (lemme~\ref{Lem:ZKZf}), o\`u $f$ est le diviseur de la fibre de $\pi$; de plus l'in\'egalit\'e $D\cdot f \geq 0$ implique que $a\leq 0$.

Les deux \'equations pr\'ec\'edentes donnent $a(a(K_S)^2-4b)=a(K_S)^2-2b=-l$, ce qui donne $a(l+2b)=l$. Les valeurs possibles pour $l$ donnent pour le triplet $(l,a,b)$ les valeurs $(1,-1,-1)$, $(2,-1,-2)$, $(4,-1,-4)$, $(4,-2,-3)$. L'\'egalit\'e $(K_S)^2=(2b-l)/a$ nous livre respectivement $3$, $6$, $12$ et $5$ comme valeurs possibles pour $(K_S)^2$. Comme $(K_S)^2=8-k$, on trouve que $k$ doit valoir $2,3$ ou~$5$.
\qed\end{proof}
\begin{coro}\label{Coro:MinFibC}
Soit $(S,\pi)$ un fibr\'e en coniques.  Si $(S,\pi)$ est l'un des deux cas suivants, le couple $(\Aut(S,\pi),S)$ est minimal:
\begin{enumerate}
\item
un fibr\'e en coniques exceptionnel ayant au minimum $4$ fibres singuli\`eres;
\item
un ${(\Z{2})^2}$-fibr\'e en coniques, tel que la surface ambiante ne soit pas une surface de del Pezzo.
\end{enumerate}
\end{coro}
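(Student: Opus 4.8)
The plan is to reduce both cases to Lemma~\ref{Lem:MinG124}, which already contains all the genuine content. In each case I would exhibit a subgroup $G\subset\Aut(S,\pi)$ that acts trivially on the base of the fibration (i.e. $G\subset\Aut(S/\mathbb{P}^1)$), has order $2$ or $4$, and satisfies $\rkPic{S}^G=2$, and whose number of singular fibers $k$ falls in the range $k=4$ or $k\geq 6$ handled by that lemma; this gives minimality of $(G,S)$. To pass from $(G,S)$ to $(\Aut(S,\pi),S)$ I would record the elementary inheritance property following directly from Definition~\ref{Def:PairesGequi}: if $G\subset H\subset\Aut(S)$ and $(G,S)$ is minimal, then $(H,S)$ is minimal, since any $H$-equivariant birational morphism $\psi\colon S\to S'$ satisfies $\psi H\psi^{-1}\subset\Aut(S')$, hence a fortiori $\psi G\psi^{-1}\subset\Aut(S')$, so $\psi$ is already $G$-equivariant and therefore an isomorphism.

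For case (1), an exceptional conic bundle with at least $4$ singular fibers, recall that by the definition in~\ref{Subsec:DefFibExc} the number of singular fibers equals $2n$ with $n\geq 1$; the hypothesis $2n\geq 4$ forces $k=2n$ to be even and at least $4$, so $k=4$ or $k\geq 6$. I would take $G=\langle\sigma\rangle$ of order $2$, where $\sigma$ is an involution of $\Aut(S/\mathbb{P}^1)$ lying outside the subgroup $\C^{*}$ (such elements exist and are involutions by Lemma~\ref{Lem:AutExcGDelta}). Then $G$ acts trivially on the base, and Lemma~\ref{Lem:AutExcGDelta}(4) yields precisely $\rkPic{S}^{\sigma}=2$. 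Lemma~\ref{Lem:MinG124} applies directly, and the inheritance step concludes.

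For case (2), a $(\Z{2})^2$-conic bundle whose ambient surface is not del Pezzo, I would take $G=\Aut(S/\mathbb{P}^1)\cong(\Z{2})^2$, of order $4$, which acts trivially on the base by definition. Proposition~\ref{Prp:Z2FibRam}(3) gives $\Pic{S}^{\Aut(S/\mathbb{P}^1)}=\mathbb{Z}K_S\oplus\mathbb{Z}f$, hence $\rkPic{S}^G=2$. The fiber count is controlled by Proposition~\ref{Prp:estdPezzofibres}(2): since $k\leq 5$ would force $S$ to be del Pezzo, the assumption that $S$ is not del Pezzo gives $k\geq 6$. Once more Lemma~\ref{Lem:MinG124} applies and the inheritance property finishes the argument. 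Since the two nontrivial inputs in each case ($\rkPic{S}^G=2$ and the value of $k$) are furnished verbatim by the earlier results, no real obstacle remains; the only point deserving explicit attention — and the thing the whole reduction hinges on — is the inheritance of minimality from $G$ to $\Aut(S,\pi)$, which I would state and justify as above rather than leave implicit.
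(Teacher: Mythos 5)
Your proof is correct and follows essentially the same route as the paper: in each case you exhibit the same subgroup of $\Aut(S/\mathbb{P}^1)$ (an involution outside $\C^{*}$ via Lemma~\ref{Lem:AutExcGDelta}, respectively $V\cong(\Z{2})^2$ via Proposition~\ref{Prp:Z2FibRam}), use Proposition~\ref{Prp:estdPezzofibres} for the bound $k\geq 6$, and conclude with Lemma~\ref{Lem:MinG124}. The only difference is that you state and justify the inheritance of minimality from the subgroup $G$ to the full group $\Aut(S,\pi)$, a step the paper leaves implicit.
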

\begin{proof}
Dans le premier cas, le lemme~\ref{Lem:AutExcGDelta} implique que $\Aut(S,\pi)$ contient un sous-groupe $G\cong \Z{2}$ tel que   $\rkPic{S}^{G}=2$. Le nombre de fibres singuli\`eres de $(S,\pi)$ \'etant \'egal \`a $2n$ pour $n\geq 2$, le lemme~\ref{Lem:MinG124} implique que le couple $(G,S)$ est minimal.
Dans le second cas, la proposition~\ref{Prp:Z2FibRam} montre que $\Aut(S,\pi)$ contient un sous-groupe $V\cong (\Z{2})^2$ tel que   $\rkPic{S}^{V}=2$. La proposition~\ref{Prp:estdPezzofibres} nous dit que le nombre de fibres singuli\`eres est plus grand ou \'egal \`a $6$; en appliquant \`a nouveau le lemme~\ref{Lem:MinG124} on trouve que le couple $(V,S)$ est minimal.
\qed\end{proof}

\begin{prop}\label{Prp:MinPaires}
Soit $(S,\pi)$ une des fibrations du th\'eor\`eme~\ref{Theo:Class}, et $G=\Aut(S,\pi)$.

Alors, $\pi$ est une $G$-fibration de Mori (au sens de la d\'efinition~\ref{defi:GfibreMori}) et le couple $(G,S)$ est minimal.
\end{prop}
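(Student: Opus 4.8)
The plan is to split Theorem~\ref{Theo:Class} into its two situations: the del Pezzo case, where $Y$ is a point and $G=\Aut(S,\pi)=\Aut(S)$, and the conic fibration case, where $Y\cong\mathbb{P}^1$. In both, the heart of the matter is a computation of $\rkPic{S}^{G}$, since Proposition~\ref{Prp:Isk} reduces the verification that $\pi$ is a $G$-fibration de Mori to checking the rank/del Pezzo/conic conditions of its cases~(1) and~(2). The first thing I would record is a uniform reduction for minimality: because $K_S\neq 0$ lies in $\Pic{S}^{G}$ we always have $\rkPic{S}^{G}\geq 1$, while any non-trivial $G$-equivariant birational morphism $S\to S'$ contracts a $G$-orbit of disjoint $(-1)$-curves and hence strictly decreases $\rkPic{}^{G}$; consequently a pair with $\rkPic{S}^{G}=1$ is automatically minimal.

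For the del Pezzo cases I would therefore reduce everything to proving $\rkPic{S}^{\Aut(S)}=1$, which by the remark above gives minimality and, $S$ being del Pezzo, places $\pi$ in case~(1) of Proposition~\ref{Prp:Isk}. For $\Pn$ this is trivial, and for $\mathbb{P}^1\times\mathbb{P}^1$ the swap of the two rulings identifies $\Pic{}^{\Aut}\otimes\mathbb{Q}$ with $\mathbb{Q}K_S$. For degrees $5$ and $6$ the image of $\Aut(S)$ in $\Aut(\Pic{S})$ is the full Weyl group of the root system carried by $K_S^{\perp}$ (namely $A_4$, resp. $A_1\times A_2$), using \cite{bib:DolWeyl} and the descriptions of \S\ref{SubSec:dPezzo6} and of the degree~$5$ surface; since a Weyl group has no non-zero invariant vector in its reflection representation, $\Pic{S}^{\Aut(S)}\otimes\mathbb{Q}=\mathbb{Q}K_S$. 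In degree $4$ I would use the subgroup $T\cong(\Z{2})^4$: each of its five generating involutions fixes an elliptic curve, so the Lefschetz fixed point formula gives $\mathrm{tr}(\cdot\,|\,\Pic{S}\otimes\mathbb{Q})=-2$ and a two-dimensional $(+1)$-eigenspace containing $K_S$; as the five involutions act faithfully with no common non-canonical invariant, their eigenspaces meet in $\mathbb{Q}K_S$, whence $\rkPic{S}^{T}=1$. In degrees $1$ and $2$ the Bertini, resp. Geiser, involution acts as $-\mathrm{id}$ on $K_S^{\perp}$, so $\rkPic{S}^{\Aut(S)}=1$ at once, and in degree $3$ minimality is part of the hypothesis, equivalent to $\rkPic{S}^{\Aut(S)}=1$ by Proposition~\ref{Prp:CubicFibCpasMin}.

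For the conic fibration cases I would proceed sub-case by sub-case. When $\pi$ is the line fibration of $\mathbb{F}_n$ with $n\geq 2$, the surface carries no $(-1)$-curve at all, so no non-trivial birational morphism of smooth surfaces exists and the pair is minimal; moreover $\Aut(\mathbb{F}_n)$ acts trivially on $\Pic{\mathbb{F}_n}$, giving $\Pic{S}^{G}\otimes\mathbb{Q}=\mathbb{Q}K_S\oplus\mathbb{Q}f$ and placing $\pi$ in case~(2) of Proposition~\ref{Prp:Isk}. For an exceptional conic bundle with at least $4$ singular fibres, Corollary~\ref{Coro:MinFibC}(1) gives minimality while Lemma~\ref{Lem:AutExcGDelta}(4) gives $\rkPic{S}^{\Aut(S,\pi)}=2$, so $\Pic{S}^{G}\otimes\mathbb{Q}=\mathbb{Q}K_S\oplus\mathbb{Q}f$ (Lemma~\ref{Lem:ZKZf}). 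For a $(\Z{2})^2$-conic bundle over a non del Pezzo surface, Corollary~\ref{Coro:MinFibC}(2) gives minimality and Proposition~\ref{Prp:Z2FibRam}(3) gives $\Pic{S}^{\Aut(S,\pi)}=\mathbb{Z}K_S\oplus\mathbb{Z}f$. In each sub-case $\pi$ is a conic fibration onto $\mathbb{P}^1$ whose $G$-invariant contracted curves are (components of) fibres $f$ with $-K_S\cdot f=2>0$, so the remaining numerical conditions of Definition~\ref{defi:GfibreMori} hold and $\pi$ is a $G$-fibration de Mori.

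The step I expect to demand the most care is the del Pezzo computation of $\rkPic{S}^{\Aut(S)}=1$, and within it the degree~$4$ case: unlike degrees $5,6$ (where a full Weyl group is directly visible) or degrees $1,2$ (where a single $-\mathrm{id}$ involution settles matters), one here has only the $2$-elementary group $T\cong(\Z{2})^4$ acting on the five-dimensional space $K_S^{\perp}\otimes\mathbb{Q}$, so one must control its invariants precisely, which is where the Lefschetz computation — or, equivalently, an explicit analysis of the action on the sixteen lines — is required.
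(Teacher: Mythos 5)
Your proof is correct, but its del Pezzo half takes a genuinely different route from the paper's. The conic-bundle half coincides with the paper's argument (minimality of $\mathbb{F}_n$ for $n\geq 2$, Corollaire~\ref{Coro:MinFibC} for minimality in the other two cases, Lemme~\ref{Lem:AutExcGDelta} and Proposition~\ref{Prp:Z2FibRam} for the rank-$2$ statements). For the del Pezzo cases the paper argues in the opposite direction: it first proves, by geometric means, that the pair $(G,S)$ is minimal \emph{and} that $G$ leaves no conic fibration invariant --- transitivity of $\Aut(S)$ on the exceptional curves in degrees $6$ and $5$; in degree $4$, the five involutions fixing elliptic curves generate a $(\Z{2})^4$ that cannot act on a conic bundle or on a del Pezzo surface of larger degree; in degrees $2$ and $1$, the Geiser and Bertini involutions fix non-hyperelliptic curves; in degree $3$ it uses, as you do, the hypothesis of Theorem~\ref{Theo:Class} and Proposition~\ref{Prp:CubicFibCpasMin} --- and only then invokes Proposition~\ref{Prp:Isk} to deduce $\rkPic{S}^G=1$. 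That deduction uses the existence half of Proposition~\ref{Prp:Isk} (the $G$-equivariant minimal model program), whereas you compute $\rkPic{S}^{\Aut(S)}=1$ directly (full Weyl groups in degrees $6,5$, Lefschetz in degree $4$, $-\mathrm{id}$ on $K_S^{\perp}$ in degrees $2,1$) and then get minimality for free from the elementary remark that an equivariant contraction strictly drops the invariant Picard rank; you need only the easy characterization half of Proposition~\ref{Prp:Isk}. Your version is more self-contained and lattice-theoretic; the paper's version produces exactly the geometric facts (fixed curves of involutions, impossibility of acting on conic bundles) that are recycled later in the superrigidity proof (Proposition~\ref{Prp:Superrigidite}), so its detours are not wasted.

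One step of yours should be tightened: in degree $4$, the clause ``with no common non-canonical invariant'' is precisely what has to be proved, so the sentence as written is circular. It closes in one line from data you already have: by your Lefschetz computation each involution of $T$ has a $2$-dimensional $(+1)$-eigenspace, whose orthogonal complement is its $(-1)$-eigenspace; if two distinct involutions of $T$ had the same $(+1)$-eigenspace they would have the same eigenspace decomposition, so their product would act trivially on $\Pic{S}$, contradicting the faithfulness of $T\rightarrow \Aut(\Pic{S})$ (an automorphism acting trivially on $\Pic{S}$ descends to an automorphism of $\Pn$ fixing the five blown-up points, hence is the identity). Two distinct planes both containing $\mathbb{Q}K_S$ meet exactly in $\mathbb{Q}K_S$, whence $\rkPic{S}^{T}=1$. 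Alternatively, average the traces over all sixteen elements of $T$: the identity contributes $6$, the five involutions $-2$ each, and the ten products $+2$ each (their fixed loci consist of four isolated points), giving invariant rank $(6-10+20)/16=1$.
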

\begin{proof}
Supposons d'abord que $Y\cong \mathbb{P}^1$, et donc que $\pi$ est une fibration en coniques. On a toujours $-K_S\cdot f=2$, o\`u $f$ est le diviseur d'une fibre. Alors, $\pi$ est une $G$-fibration de Mori si et seulement si $\rkPic{S}^G=2$. Si $S$ est une surface de Hirzebruch $\mathbb{F}_n$ pour $n\geq 2$, alors celle-ci est minimale et donc le couple $(\Aut(S,\pi),S)$ \'egalement; comme $\rkPic{S}=2$, on trouve $\rkPic{S}^G=2$, ce qui termine la preuve. Si $\pi$ est un fibr\'e exceptionnel ou un $(\Z{2})^2$-fibr\'e en coniques, le lemme~\ref{Lem:AutExcGDelta} et la proposition~\ref{Prp:Z2FibRam} livrent l'\'egalit\'e $\rkPic{S}^G=2$; le corollaire~\ref{Coro:MinFibC} montre que le couple $(G,S)$ est minimal.

Supposons que $Y$ soit un point, et donc que $S$ est une surface de del Pezzo et $G=\Aut(S)$. Montrons que le couple $(G,S)$ est minimal et qu'aucun fibr\'e en coniques n'est laiss\'e invariant par $G$, ce qui impliquera (proposition~\ref{Prp:Isk}) que $\rkPic{S}^G=1$ et donc que $\pi$ est une $G$-fibration de Mori. Si $S\cong \Pn,\mathbb{P}^1\times\mathbb{P}^1$, alors $S$ est minimale et donc $(G,S)$ aussi et  aucune fibration n'est laiss\'ee invariante.  Si $S$ est la surface de del Pezzo de degr\'e~$6$ (respectivement $5$), alors $G$ agit transitivement sur les $6$ (respectivement $10$) courbes exceptionnelles, ce qui implique le r\'esultat. Si $S$ est de degr\'e~$4$, alors $G$ contient cinq involutions fixant des courbes elliptiques; donc apr\`es une conjugaison birationnelle, $G$ ne peut agir sur aucune surface de del Pezzo de degr\'e~$\geq 5$, ni sur aucun fibr\'e en coniques. Si $S$ est de degr\'e~$3$, la minimalit\'e du couple $(G,S)$ est impos\'ee par l'\'enonc\'e du th\'eor\`eme~\ref{Theo:Class}, ce qui implique qu'aucun fibr\'e en coniques $\pi:S\rightarrow \mathbb{P}^1$ n'est laiss\'e invariant par $G$ (proposition~\ref{Prp:CubicFibCpasMin}). Si $S$ est de degr\'e~$2$ (respectivement $1$), alors $G$ contient l'involution de Geiser (respectivement celle de Bertini), qui n'agit sur aucune autre surface de del Pezzo de degr\'e sup\'erieur ni sur aucun fibr\'e en coniques, les deux involutions fixant des courbes non-hyperrelliptiques (voir \cite{bib:BaB}).
\qed\end{proof}

\section{Superrigidit\'e birationnelle}\label{Sec:Superrigidite}
\setcounter{subsection}{1}
La proposition~\ref{Prp:MinPaires} montre que chacun des couples $(G,S)$ du th\'eor\`eme~\ref{Theo:Class} est minimal et repr\'esente une $G$-fibration de Mori. Nous d\'emontrons maintenant un r\'esultat plus fort, c'est-\`a-dire que ces $G$-fibrations de Mori sont birationnellement superrigides.

\begin{lemm}\label{Lem:PasDautrefibration}
Soit $\pi:S\rightarrow \mathbb{P}^1$ un fibr\'e en coniques et soit $G\subset \Aut(S,\pi)$ tel que $\rkPic{S}^G=2$. Supposons que $\pi':S\rightarrow \mathbb{P}^1$ est une fibration en coniques avec des fibres diff\'erentes de celles de $\pi$, qui soit invariante par $G$. Alors $S$ est une surface de del Pezzo de degr\'e~$1$, $2$ ou $4$ ou $8$. 
\end{lemm}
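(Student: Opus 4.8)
The plan is to exploit the two invariant classes $f=[\text{fibre of }\pi]$ and $f'=[\text{fibre of }\pi']$, which both lie in $\Pic{S}^G$ since the fibrations are $G$-invariant. Each is nef and base-point free (being the pullback of a point by a morphism to $\mathbb{P}^1$), with $f^2=(f')^2=0$ and $-K_S\cdot f=-K_S\cdot f'=2$. First I would check that $f$ and $f'$ are $\mathbb{Q}$-linearly independent: if $f'=\lambda f$ in $\Pic{S}\otimes\mathbb{Q}$, then intersecting with $-K_S$ gives $2=2\lambda$, so $\lambda=1$ and $f'=f$ (the Picard group of a rational surface being torsion-free); but then $|f|=|f'|$ would induce the same morphism to $\mathbb{P}^1$, so $\pi$ and $\pi'$ would have the same fibres, contrary to hypothesis. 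Since $\rkPic{S}^G=2$, the pair $\{f,f'\}$ is therefore a $\mathbb{Q}$-basis of $\Pic{S}^G\otimes\mathbb{Q}$.

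Next I would locate $K_S$ in this basis. Writing $K_S=\alpha f+\beta f'$ and intersecting with $f$ and $f'$ (using $f^2=(f')^2=0$) yields $\beta\,(f\cdot f')=\alpha\,(f\cdot f')=-2$. In particular $m:=f\cdot f'\neq 0$, hence $m>0$ as both classes are nef, and $\alpha=\beta=-2/m$, that is
\[-K_S=\frac{2}{m}(f+f').\]
Squaring and using $f^2=(f')^2=0$ gives $K_S^2=\tfrac{4}{m^2}\cdot 2(f\cdot f')=8/m$. Since $K_S^2$ is an integer, $m$ must divide $8$, so $m\in\{1,2,4,8\}$ and $K_S^2\in\{8,4,2,1\}$; these are exactly the four degrees appearing in the statement.

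It remains to prove that $S$ is del Pezzo, i.e. that $-K_S$ is ample; as $K_S^2>0$, by the Nakai--Moishezon criterion it suffices to show $-K_S\cdot C>0$ for every irreducible curve $C$. From the displayed formula, $-K_S\cdot C=\tfrac{2}{m}(f\cdot C+f'\cdot C)\geq 0$, with equality only if $f\cdot C=f'\cdot C=0$. The delicate point — and really the only one — is to exclude this equality case. If $f\cdot C=0$ then $C$ lies in a fibre of $\pi$: either $C$ is a whole smooth fibre, in which case $C\equiv f$ and $f'\cdot C=f\cdot f'=m>0$, a contradiction; or $C$ is a component of a singular fibre, which by the definition of a conic bundle satisfies $f\cdot C=C^2+1=0$, so $C$ is a $(-1)$-curve and $-K_S\cdot C=1\neq 0$, again a contradiction. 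Hence no irreducible curve is orthogonal to $-K_S$, so $-K_S$ is ample and $S$ is a del Pezzo surface of degree $K_S^2\in\{1,2,4,8\}$.
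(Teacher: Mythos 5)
Your proof is correct, and it takes a genuinely different route from the paper's. The paper first settles the Hirzebruch case separately ($S\cong\mathbb{P}^1\times\mathbb{P}^1$, degree $8$), then uses lemme~\ref{Lem:ZKZf} to write $\Pic{S}^G=\mathbb{Z}K_S\oplus\mathbb{Z}f$ and expands the fibre class of $\pi'$ in this \emph{integral} basis as $C=-aK_S+bf$; the relations $C\cdot f\geq 0$, $C^2=0$, $-K_S\cdot C=2$ force $b=-1$ and $a(K_S)^2=4$, so $(K_S)^2\in\{1,2,4\}$, and the del Pezzo property is obtained by contradiction: an irreducible curve $D$ with $D^2\leq -2$ would satisfy $D\cdot(-K_S)\leq 0$ (adjunction) and $D\cdot f>0$, hence $D\cdot C<0$, which is impossible for a nef fibre class. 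You work instead in the $\mathbb{Q}$-basis $\{f,f'\}$ of $\Pic{S}^G\otimes\mathbb{Q}$ and derive the identity $-K_S=\frac{2}{m}(f+f')$ with $m=f\cdot f'$, from which everything follows: $(K_S)^2=8/m$ gives all four degrees at once (no case split, no need for the integral-basis lemma), and ampleness of $-K_S$ becomes transparent via Nakai--Moishezon, since $-K_S$ is a positive combination of nef classes and an irreducible curve orthogonal to both would have to be a fibre component of $\pi$, excluded because such components are either whole fibres (which meet $f'$) or $(-1)$-curves (of anticanonical degree $1$). The only step you state a bit quickly is the independence of $f$ and $f'$: the implication ``equal classes $\Rightarrow$ equal fibrations'' implicitly uses that $|f|$ is exactly the pencil of fibres, i.e. $h^0(S,f)=2$, a standard fact for a fibration over $\mathbb{P}^1$ with connected fibres; alternatively one can argue purely numerically, noting that a general fibre of $\pi'$ is irreducible of self-intersection $0$, hence is not a component of any fibre of $\pi$, so $f\cdot f'>0$, which rules out proportionality immediately and also gives $m>0$ for free.
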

\begin{proof}
Si $S$ est une surface de Hirzebruch, c'est forc\'ement $\mathbb{P}^1\times\mathbb{P}^1$, qui est une surface de del Pezzo de degr\'e~$8$.  
On peut alors supposer qu'il y a au moins une fibre singuli\`ere. Notons $f$ le diviseur de la fibre de $\pi$ et $C$ celui de la fibre de $\pi'$. Le lemme~\ref{Lem:ZKZf} implique que $\Pic{S}^G=\mathbb{Z}K_S\oplus \mathbb{Z}f$; on \'ecrit alors $C=-aK_S+bf$ pour $a,b\in \mathbb{Z}$. L'in\'egalit\'e $C\cdot f \geq 0$ implique $a\geq 0$ et les \'equations $C^2=0$ et $-K_S\cdot C=2$ donnent respectivement $a(a(K_S)^2+4b)=0$ et $a(K_S)^2+2b=2$, ce qui implique que $b=-1$ et $a(K_S)^2=4$. On trouve donc que $(K_S)^2\in\{1,2,4\}$. Il reste \`a voir que $S$ est une surface de del Pezzo. Si tel n'est pas le cas, alors il existe une courbe irr\'eductible d'auto-intersection~$\leq -2$ (se d\'eduit par exemple de \cite[Proposition~2 et Th\'eor\`eme~1]{bib:DemDPezzo}); en notant $D$ le diviseur de la courbe on trouve $D\cdot f> 0$ et $D\cdot (-K_S)\leq 0$. Ceci implique l'in\'egalit\'e $D\cdot C<0$, qui est impossible.
\qed\end{proof}

\begin{prop}\label{Prp:Superrigidite}
Soit $\pi:S\rightarrow Y$ une des fibrations du th\'eor\`eme~\ref{Theo:Class} et no\-tons $G=\Aut(S,\pi)$.

Soit $\pi':S'\rightarrow Y'$ une $G$-fibration de Mori et supposons qu'il existe une application birationnelle $\varphi:S\dasharrow S'$ qui soit $G$-\'equivariante.
Alors, $\varphi$ est un isomorphisme de $G$-fibrations de Mori. 

Ceci signifie que la $G$-fibration de Mori $\pi:S\rightarrow Y$ est birationnellement superrigide. 
\end{prop}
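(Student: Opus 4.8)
The plan is to invoke the $G$-equivariant Sarkisov program in dimension two (\cite{bib:IskSarkisov}, \cite{bib:DoI}) to factor $\varphi$ into a chain of elementary $G$-links between $G$-fibrations de Mori; by induction on the length of such a factorization it suffices to prove that every elementary $G$-link issuing from $(S,\pi)$ is an isomorphism. Equivalently, I would argue by Noether--Fano: assuming $\varphi$ is not an isomorphism, the mobile linear system $\mathcal{H}=\varphi^{*}\mathcal{H}'$ pulled back from a very ample $G$-invariant system on $S'$ carries a $G$-invariant maximal singularity, i.e. a $G$-orbit $\Sigma$ of (possibly infinitely near) points at which $\mathcal{H}$ has multiplicity exceeding the admissible threshold. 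The goal is then to show that no such $\Sigma$ can exist.

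Suppose first that $Y$ is a point, so $S$ is del Pezzo of degree $d=(K_S)^2$ and $\rkPic{S}^G=1$. Writing $\mathcal{H}\equiv -\mu K_S$ one has $\mathcal{H}^2=\mu^2 d$, while $\sum_{p\in\Sigma}(\mathrm{mult}_p\mathcal{H})^2\leq\mathcal{H}^2$ with $\mathrm{mult}_p\mathcal{H}>\mu$ for each $p\in\Sigma$; hence $|\Sigma|<d$. The hypotheses of Theorem~\ref{Theo:Class} force every $G$-orbit off the exceptional curves to have at least $d$ points (automatic for $d\in\{1,4,5,6,9\}$, imposed for $d=2,3$), so $\Sigma$ must lie on one of the finitely many exceptional curves or be a $G$-fixed point. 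For $d=1$ this already gives $|\Sigma|<1$, a contradiction. For $\Pn$ and $\mathbb{P}^1\times\mathbb{P}^1$ the group has no finite orbit at all; for $S_6$ the only finite $G$-orbit is the set of six vertices of the hexagon of exceptional curves (the toric fixed points), and for $S_5$ and the degree-$4$ surfaces one checks surface by surface that the finite orbits carried by exceptional curves still have at least $d$ points. In degree $3$ I would invoke Lemme~\ref{Lem:Min3pts}, which says that for the admissible cubics every $G$-orbit has at least $3=d$ points, again contradicting $|\Sigma|<3$.

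The genuinely delicate case is $d=2$, where $|\Sigma|<2$ only rules out orbits of size $\geq 2$ and leaves a single $G$-fixed point as candidate center. Here the hypothesis that all fixed points lie on exceptional curves is essential but not by itself conclusive, since blowing up such a point and contracting a neighbouring $(-1)$-curve can a priori reach another smooth del Pezzo. I expect this to be the main obstacle, and the plan is to resolve it using the central Geiser involution $\gamma\in G=\Aut(S)$: being central, $\gamma$ is conjugated by $\varphi$ to an automorphism of $S'$ fixing a curve birational to the non-hyperelliptic genus-$3$ curve fixed by $\gamma$ on $S$; since any involution of a del Pezzo surface of degree $\geq 3$ or of a conic bundle fixes only rational or hyperelliptic curves (cf. the argument in Proposition~\ref{Prp:MinPaires}), this pins $S'$ down and forces $\varphi$ to be an isomorphism. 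The same fixed-curve invariant (the high-genus hyperelliptic curves of the $(\Z{2})^2$-bundles, the Geiser and Bertini curves in degrees $2$ and $1$) is what excludes all changes of fibration type.

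Finally suppose $Y\cong\mathbb{P}^1$, so $\pi$ is a conic bundle and $\rkPic{S}^G=2$. If $S=\mathbb{F}_n$ with $n\geq 2$, the connected group $\Aut(\mathbb{F}_n)$ has no finite point-orbit, $\mathbb{F}_n$ carries no $(-1)$-curve, and its conic fibration is unique, so no nontrivial link exists. For an exceptional bundle or a $(\Z{2})^2$-bundle with $S$ not del Pezzo, a link either preserves the fibration over $\mathbb{P}^1$ or changes it. A link changing the fibration would produce a second $G$-invariant conic bundle with different fibres, forcing $S$ to be del Pezzo of degree $1,2,4$ or $8$ by Lemme~\ref{Lem:PasDautrefibration}; but these surfaces carry $(-n)$-sections with $n\geq 2$, hence $(-2)$-curves, so they are never del Pezzo and the case is excluded (passage to a del Pezzo base being barred by the fixed-curve invariant above). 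A link preserving the fibration is an elementary transformation centered at a $G$-orbit on the fibres: for the $(\Z{2})^2$-bundle this is exactly the situation of Proposition~\ref{Prp:Z2FibRam}(5), which already gives that $\varphi$ is an isomorphism, and for the exceptional bundle the set $\Delta$ of singular fibres is preserved, so Corollaire~\ref{Coro:IsoFibCExcp} yields the same conclusion. The main remaining work is thus the case-by-case verification for the del Pezzo surfaces of degree $2$ and $3$, together with the careful exclusion of fibration-type changes via the fixed-curve invariant.
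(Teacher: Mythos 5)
Your overall strategy (equivariant Sarkisov factorization followed by a case analysis) is the same as the paper's, but three of your steps have genuine gaps, the most serious being exactly the one you single out as ``the main obstacle''. In degree $2$ you assert that a $G$-fixed point lying on an exceptional curve could a priori be the center of a link (``blowing up such a point and contracting a neighbouring $(-1)$-curve can a priori reach another smooth del Pezzo''), and you therefore try to replace the hypothesis of Th\'eor\`eme~\ref{Theo:Class} by an argument with the Geiser involution. Both halves of this are mistaken. First, a maximal center $p$ of the mobile system $\mathcal{H}\equiv -\mu K_S$ can never lie on a $(-1)$-curve $E$: since $E$ is not a component of a general member, one gets $\mu=-\mu K_S\cdot E=\mathcal{H}\cdot E\geq \mathrm{mult}_p\mathcal{H}>\mu$, a contradiction. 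In the paper's formulation, which follows \cite[Th\'eor\`eme~2.6]{bib:IskSarkisov}, this is the statement that a link of type II between del Pezzo surfaces blows up an orbit whose blow-up is again a del Pezzo surface, so no blown-up point can lie on an exceptional curve. Hence the hypothesis that all fixed points of $\Aut(S)$ lie on exceptional curves finishes the degree-$2$ (and degree-$3$) case with no extra work. Second, your Geiser substitute does not prove what is required: conjugating the central involution shows at best that $S'$ is a del Pezzo surface of degree $2$ whose branch quartic is isomorphic to that of $S$, hence $S'\cong S$; but superrigidity asserts that $\varphi$ \emph{itself} is an isomorphism, and $S'\cong S$ does not exclude a nontrivial $G$-equivariant birational self-map.

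On the conic-bundle side two of your reductions also fail as stated. For a fibration-preserving link of an exceptional bundle you invoke Corollaire~\ref{Coro:IsoFibCExcp}; that corollary only compares isomorphism classes of surfaces, so, as above, it cannot show that the elementary transformation is an isomorphism. The argument that works (and is the paper's, uniformly for the fibered families) is that $\Aut(S/\mathbb{P}^1)$ acts without fixed point on every smooth fiber --- by Lemme~\ref{Lem:AutExcGDelta} it contains $\C^{*}\rtimes\Z{2}$, the involution exchanging the two points of the fiber fixed by $\C^{*}$ --- so there is no $G$-orbit whose points lie in distinct smooth fibers, and no such link exists; this is also precisely the content of Proposition~\ref{Prp:Z2FibRam}(5), which you correctly use for the $(\Z{2})^2$-bundles. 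Finally, your exclusion of links from a conic bundle to a del Pezzo fibration (type III) by a ``fixed-curve invariant'' breaks down in the boundary cases that the theorem actually allows: for an exceptional bundle with $4$ singular fibers, and for a $(\Z{2})^2$-bundle of type $(2,2,2)$ with $6$ singular fibers (Proposition~\ref{prop:Fibre4droites}), the involutions of $\Aut(S/\mathbb{P}^1)$ fix \emph{elliptic} curves, and involutions fixing elliptic curves do act on del Pezzo surfaces of degree $4$ and $3$; excluding these configurations needs a further argument (e.g.\ that no subgroup $(\Z{2})^2$ of $\Aut$ of such a surface has all three involutions fixing elliptic curves). The paper bypasses this entirely with the numerical constraints of \cite[Th\'eor\`eme~2.6]{bib:IskSarkisov}: type III links require $(K_S)^2\in\{3,5,6\}$, which no surface of the two fibered families satisfies.
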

\begin{proof}On peut d\'ecomposer $\varphi$ en automorphismes de $G$-fibrations de Mori et \emph{liens \'el\'ementaires} $G$-\'equivariants, chacun passant d'une $G$-fibration de Mori \`a une autre. Ceci est appel\'e actuellement \emph{programme de Sarkisov $G$-\'equiva\-riant} et a \'et\'e d\'emontr\'e dans le cas des surfaces dans \cite{bib:IskSarkisov} (voir aussi \cite{bib:Corti}). Notons que la d\'emonstration de \cite{bib:IskSarkisov} et \cite{bib:Corti} se fait dans le cas o\`u $G$ est le groupe de Galois d'une extension de corps, mais que le cas o\`u $G$ agit de mani\`ere g\'eom\'etrique se d\'emontre de la m\^eme mani\`ere (voir la section~\ref{SubSec:MMpDim2} et l'introduction de \cite{bib:Man}); citons \'egalement la d\'emonstration de \cite{bib:DoI}, faite dans le cadre g\'eom\'etrique. Les liens \'el\'ementaires sont classifi\'es dans \cite{bib:IskSarkisov} (ainsi que dans \cite{bib:DoI}).

Nous allons d\'emontrer qu'aucun lien ne sort de la $G$-surface $(G,S)$, ce qui ach\`evera la d\'emonstration. Il suffit donc de supposer que $\varphi$ est un lien \'el\'ementaire et de trouver une contradiction. Il y a  $4$ types de liens possibles (que l'on retrouve dans \cite{bib:IskSarkisov}), qui d\'ependent notamment de $Y$ et $Y'$; ces derniers pouvant \^etre $\mathbb{P}^1$ ou un point (not\'e $*$). Les $4$ liens sont d\'ecrits dans la figure suivante,
\begin{center}
\begin{tabular}{cccc}
Type $I$& 
Type $II$ & Type $III$& 
Type $IV$\\
$\xymatrix{S\ar[d]_{{\pi}}&S'\ar[l]_{\sigma}\ar[d]^{\pi'}\\
{*}&\mathbb{P}^1\ar[l] } $& 
$\xymatrix{S\ar[d]_{{\pi}}&X\ar[l]_{\sigma}\ar[r]^{\tau}&S'\ar[d]^{\pi'}\\
Y\ar[rr]^{\cong}&&Y' } $ &
$\xymatrix{S\ar[d]_{{\pi}}\ar[r]^{\sigma}&S'\ar[d]^{\pi'}\\
\mathbb{P}^1\ar[r]&{*} } $& 
$\xymatrix{S\ar[d]_{{\pi}}\ar[r]^{\cong}&S'\ar[d]^{\pi'}\\
\mathbb{P}^1&\mathbb{P}^1 } $\\
\end{tabular}
\end{center}
o\`u $G$ agit bir\'eguli\`erement sur $S,S',X$ et $\sigma$ et $\tau$ sont des morphismes birationnels contractant une $G$-orbite et tous les diagrammes sont commutatifs et $G$-\'equivariants.

Si la surface $S$ est $\mathbb{P}^2$, $\mathbb{P}^1\times\mathbb{P}^1$, la surface de del Pezzo de degr\'e~$6$ (\ref{SubSec:dPezzo6}) ou une surface de Hirzebruch $\mathbb{F}_n$ avec $n\geq 2$ alors aucun lien de type $I,II,III$ n'est possible, car il n'existe pas d'orbite finie. Les liens de type $IV$ ne sont pas possibles non plus car aucune de ces surfaces n'admet deux fibrations en coniques $G$-\'equivariantes.

\'Etudions maintenant dans les autres cas chacun des $4$ liens.

\emph{Lien de type $I$} - Dans ce cas,  $S$ est une surface de del Pezzo et $Y$ est un point. De plus, \cite[Th\'eor\`eme 2.6]{bib:IskSarkisov} montre que $(K_S)^2=4,8,9$. Les cas $8$ et $9$ ayant \'et\'e \'etudi\'es pr\'ec\'edemment ($\Pn$ et les surfaces de Hirzebruch), il nous reste $(K_S)^2=4$, mais alors $G=\Aut(S)$ contient un groupe isomorphe \`a $(\Z{2})^4$, engendr\'e par $5$ involutions qui fixent chacune une courbe elliptique (\ref{SubSec:dPdeg4}). Un tel groupe ne pouvant pas agir sur un fibr\'e en coniques, ce lien n'est pas possible.

\emph{Lien de type $II$}. Deux cas se pr\'esentent. Si $Y,Y'\cong \mathbb{P}^1$, alors $\sigma$ \'eclate une orbite de $G$ sur $S$, dont tous les points appartiennent \`a des fibres lisses distinctes et $\tau$ contracte les transform\'ees strictes des fibres de chacun des points \'eclat\'es par $\sigma$. Comme chacun des groupes $G$ du th\'eor\`eme pr\'eservant des fibrations agit sans point fixe sur chaque fibre lisse, ce lien n'est pas possible. Il nous reste le cas o\`u $Y,Y'$ sont des points et $S,S'$ sont des surfaces de del Pezzo. Alors, \cite[Th\'eor\`eme 2.6]{bib:IskSarkisov} montre que le nombre de points \'eclat\'es par $\sigma$ est strictement plus petit que $(K_S)^2$ et que $X$ est une surface de del Pezzo; aucun des point \'eclat\'es n'appartient donc \`a une courbe exceptionnelle. Il suffit alors de voir que les orbites de l'action de $\Aut(S)$ sur $S$ priv\'e de ses courbes exceptionnelles ont toutes une taille au moins \'egale \`a $(K_S)^2$. Si $(K_S)^2=1$, c'est clair; si $(K_S)^2\leq 3$, c'est impos\'e par l'\'enonc\'e du th\'eor\`eme~\ref{Theo:Class} et par le lemme~\ref{Lem:Min3pts}. Si $(K_S)^2=4$, alors $S\subset \mathbb{P}^4$ est l'intersection de deux quadriques (\ref{SubSec:dPdeg4}) et $\Aut(S)$ contient le sous-groupe diagonal $T\cong (\Z{2})^4$ dont les orbites sur $S$ sont toutes de taille au moins \'egal \`a $4$. Si $(K_S)^2=5$, alors $S$ est l'\'eclatement de $4$ points dans le plan et $\Aut(S)$ contient le relev\'e du groupe $\Sym_4\subset \Aut(\Pn)$ pr\'eservant ces points. Or, les orbites de ce dernier sur le plan priv\'e des droites passant par les $4$ points sont toutes de taille au moins \'egale \`a $6$.

\emph{Lien de type $III$}. Ici, $\pi:S\rightarrow Y$ est une fibration en coniques et \cite[Theorem~2.6]{bib:IskSarkisov} nous dit qu'un tel lien n'existe que si $S\cong \mathbb{F}_1$ (ce qui n'est pas le cas ici) ou si $(K_S)^2\in \{3,5,6\}$, c'est-\`a-dire que le nombre de fibres singuli\`eres est $6,3$ ou $2$ et qui n'est pas le cas non plus.

\emph{Lien de type $IV$} Ici, $\pi:S\rightarrow Y$ est une fibration en coniques et le lien consiste en un changement de fibration en coniques sur la m\^eme surface. Il suffit de voir $\Aut(S,\pi)$ ne pr\'eserve aucune autre fibration en coniques sur $S$. Si $S\cong \mathbb{F}_n$ pour $n\geq 2$, c'est vrai car il n'existe qu'une fibration en coniques sur $\mathbb{F}_n$. Si on est dans le cas d'un fibr\'e exceptionnel ou d'un $(\Z{2})^2$-fibr\'e en coniques, alors $S$ n'est pas une surface de del Pezzo (dans le premier cas, la courbe contient deux sections d'auto-intersections $\leq -2$ et dans le deuxi\`eme cas c'est impos\'e par l'\'enonc\'e du th\'eor\`eme~\ref{Theo:Class}); l'inexistence d'une autre fibration suit alors du lemme~\ref{Lem:PasDautrefibration}.
\qed\end{proof}
\section{D\'emonstrations des th\'eor\`emes}\label{Sec:DemoThms}
\setcounter{subsection}{1}
\begin{proof}[D\'emonstration du th\'eor\`eme~\ref{Theo:Class}]
La proposition~\ref{Prp:Contenu} montre que tout sous-groupe alg\'ebrique du groupe de Cremona est contenu dans un groupe de l'une des familles du th\'eor\`eme~\ref{Theo:Class}. Ceci implique alors que tout groupe alg\'ebrique maximal est \'egal \`a l'un de ces groupes. 

La maximalit\'e de chacun de groupes et les deux derni\`eres assertions du th\'eor\`eme suivent alors de la proposition~\ref{Prp:Superrigidite}. 
\qed\end{proof}

\begin{proof}[D\'emonstration du th\'eor\`eme~\ref{Thm:Param}]
Le th\'eor\`eme~\ref{Theo:Class} \'etant \'etabli, il s'agit maintenant de montrer que les descriptions des groupes d'automorphismes et les param\'etrisations du th\'eor\`eme~\ref{Thm:Param} sont correctes.

Les groupes d'automorphismes des surfaces de del Pezzo proviennent des descriptions faites \`a la section~\ref{Sec:AutdP} et ceux des fibr\'es en coniques de la section~\ref{Sec:GaFibr}.

En ce qui concerne les param\'etrisations des surfaces de del Pezzo, les seuls cas qui ne d\'ecoulent pas directement du th\'eor\`eme~\ref{Theo:Class} sont les surfaces de degr\'e~$2$ et $3$. Dans le premier cas, $S$ est le rev\^etement double de $\Pn$ le long d'une quartique lisse (\ref{SubSec:dP2}) et il faut alors observer qu'une courbe exceptionnelle de $S$ correspond \`a une bitangente de la quartique. Pour les surfaces cubiques, la proposition~\ref{Prp:CubicMin} et le lemme~\ref{Lem:Min3pts} donnent les trois familles de surfaces qui ont des groupes d'automorphismes qui sont des sous-groupes alg\'ebriques maximaux du groupe de Cremona. La premi\`ere famille est de la forme $W^3=L_3(X,Y,Z)$, o\`u $L_3$ est l'\'equation d'une cubique lisse de $\Pn$, que l'on peut mettre sous forme de Hesse $X^3+Y^3+Z^3+\lambda XYZ$. Deux cubiques lisses du plan \'etant isomorphes si et seulement si il existe un automorphisme du plan qui envoie l'une sur l'autre, la classe d'isomorphisme de la courbe cubique d\'etermine celle de la surface cubique. La deuxi\`eme famille ne comporte qu'un \'el\'ement (voir la proposition~\ref{Prp:CubicMin}) et la derni\`ere est celle des surfaces isomorphes \`a 
$W^3+W(X^2+Y^2+Z^2)+\lambda XYZ=0$ avec $9 \lambda^3\not=8\beta$, $8 \lambda^3\not=-1$. La classe d'isomorphisme au sein de cette famille est d\'etermin\'ee par le param\`etre $\lambda$, \`a un facteur $-1$ pr\`es. Ceci se d\'etermine en cherchant les automorphismes de $\mathbb{P}^3$ qui envoient une surfaces cubique sur une autre; ceux-ci devant normaliser le sous-groupe diagonal de $\Aut(S)$ isomorphe \`a $(\Z{2})^2$, ils sont \'egaux \`a une composition de permutations des variables et de matrices diagonales. Les automorphismes obtenus sont alors des compos\'ees d'automorphismes de $S$ avec le changement de variable $X\mapsto -X$.

La param\'etrisation des classes d'isomorphisme de surfaces de Hirzebruch $\mathbb{F}_n$ est donn\'ee par l'entier $n$.

Celle des fibr\'es en coniques exceptionnels est donn\'ee par les points de $\mathbb{P}^1$ ayant des fibres singuli\`eres, modulo l'action de $\Aut(\mathbb{P}^1)$ (Corollaire~\ref{Coro:IsoFibCExcp}). 

Finalement, celle des $(\Z{2})^2$-fibr\'es exceptionnels est donn\'ee par le triplet de points de ramification (proposition~\ref{Prp:Z2FibRam}). 
\qed\end{proof}
\section{$(\Z{2})^2$-fibr\'es en coniques donnant des sous-groupes alg\'ebriques maximaux}\label{Sec:Z2Max}
\subsection{Fibr\'es en coniques avec $6$ ou $7$ fibres singuli\`eres sur une surface qui ne soit pas de del Pezzo}\label{Sec:Z2PasDel}
Soit $\pi:S\rightarrow \mathbb{P}^1$ un $(\Z{2})^2$-fibr\'e en coniques avec $k$ fibres singuli\`eres. Le groupe $\Aut(S,\pi)$ est un sous-groupe alg\'ebrique maximal du groupe de Cremona, si et seulement si la surface $S$ n'est pas de del Pezzo (th\'eor\`eme~\ref{Theo:Class}). Cette derni\`ere condition implique que $k\geq 6$ et est toujours v\'erifi\'ee si $k\leq 8$ (proposition~\ref{Prp:estdPezzofibres}).

Nous d\'ecrivons dans cette section les cas possibles de $(\Z{2})^2$-fibr\'es en coniques avec $6,7$ fibres singuli\`eres. Ceci r\'epond notamment \`a une question de \cite[Section "What is left?"]{bib:DoI} sur l'existence de tels groupes d'automorphismes.

La proposition suivante d\'etermine la g\'eom\'etrie des $(\Z{2})^2$-fibr\'e en coniques ayant $6$ ou $7$ fibres singuli\`eres qui donnent lieu \`a des sous-groupes alg\'ebriques maximaux du groupe de Cremona. Nous donnons ensuite un moyen de construire ceux-ci

\begin{prop}\label{Z267fibresGeometrie}
Soit $\pi:S\rightarrow \mathbb{P}^1$ un $(\Z{2})^2$-fibr\'e en coniques avec $k$ fibres singuli\`eres, $k\in\{6,7\}$, et un triplet de ramification avec $2a_1,2a_2,2a_3$ points (avec $k=a_1+a_2+a_3$). Alors, le groupe $\Aut(S,\pi)$ est un sous-groupe alg\'ebrique maximal du groupe de Cremona si et seulement si on est dans l'une des situations suivantes:
\begin{itemize}
\item[$(a)$]
l'un des trois nombres $a_i$ est \'egal \`a $1$, i.e. $\{a_1,a_2,a_3\}$ est \'egal \`a $\{1,1,4\}$, $\{1,2,3\}$, $\{1,1,5\}$, $\{1,2,4\}$ ou $\{1,3,3\}$.
\item[$(b)$]
il existe $4$ sections $s_0,...,s_3$  de $\pi$, d'auto-intersection $-2$, permut\'ees transitivement par $\Aut(S/\mathbb{P}^1)$ et un morphisme birationnel $\eta:S\rightarrow \Pn$ qui est l'\'eclatement des points $q,p_1,...,p_k$ du plan, qui envoie une fibre g\'en\'erale de $\pi$ sur une droite passant par $q$, qui envoie l'ensemble des quatres sections $s_0,..,s_3$ sur un ensemble de quatre courbes \`a croisements simples transverses, et qui satisfait l'une des assertions suivantes:
\begin{enumerate}
\item[$(b1)$]
le morphisme $\eta$ envoie les sections $s_0,...,s_3$ sur $4$ droites ne passant pas par $q$. De plus, $k=6$, $\{a_1,a_2,a_3\}=\{2,2,2\}$ et les points $p_1,...,p_6$ sont les $6$ points du plan appartenant \`a deux des $4$ droites.
\item[$(b2)$]
le morphisme $\eta$ envoie les sections $s_0,...,s_3$ sur trois droites ne passant pas par $q$ et une conique passant par $q$. 
De plus, $k=7$, $\{a_1,a_2,a_3\}=\{2,2,3\}$, et les points $p_1,...,p_7$ sont $7$ des $9$ points du plan appartenant \`a deux des $4$ courbes. Les $2$ points restants sont align\'es avec $q$.
\end{enumerate}
\end{itemize}
\end{prop}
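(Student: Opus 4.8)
The plan is to read the statement as a refinement of the dichotomy already available for $k\in\{6,7\}$. By Theorem~\ref{Theo:Class}, $\Aut(S,\pi)$ is a maximal algebraic subgroup if and only if $S$ is not a del Pezzo surface, and by Proposition~\ref{Prp:estdPezzofibres}(3) the latter happens, for $k\in\{6,7\}$, exactly when either $S$ carries four sections of self-intersection~$-2$, or a curve $C_i$ fixed by an involution of $\Aut(S,\pi)$ is rational. First I would identify this second alternative with case~$(a)$: by Proposition~\ref{Prp:Z2FibRam}, $C_i$ is a double cover of $\mathbb{P}^1$ ramified over $2a_i$ points, so Riemann--Hurwitz gives $g(C_i)=a_i-1$, and $C_i$ is rational precisely when $a_i=1$. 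Among the partitions $k=a_1+a_2+a_3$ into positive parts, those containing a $1$ are exactly $\{1,1,4\},\{1,2,3\}$ for $k=6$ and $\{1,1,5\},\{1,2,4\},\{1,3,3\}$ for $k=7$, which is the list in $(a)$; the remaining partitions are $\{2,2,2\}$ and $\{2,2,3\}$, the setting of $(b)$.

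With this translation the two sufficiency implications are immediate: if some $a_i=1$ then $C_i$ is rational and $S$ is not del Pezzo; if $S$ has a $(-2)$-section then $S$ is not del Pezzo, since a del Pezzo surface carries no curve of self-intersection~$\le -2$; in both cases Theorem~\ref{Theo:Class} yields maximality. For the converse I assume $\Aut(S,\pi)$ maximal with no $a_i=1$, so that $\{a_1,a_2,a_3\}\in\{\{2,2,2\},\{2,2,3\}\}$. Re-running the argument in the proof of Proposition~\ref{Prp:estdPezzofibres}, if the least self-intersection of a section were $-1$ then ``$S$ not del Pezzo'' would force some $a_i=1$; hence this minimum equals $-2$, and applying $V=\Aut(S/\mathbb{P}^1)$ to one such section produces four sections $s_0,\dots,s_3$ of self-intersection~$-2$ permuted transitively by $V$. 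This secures the opening clause of $(b)$ and reduces everything to an explicit description.

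Next I would build $\eta$. Presenting $S$ as the blow-up of $\mathbb{F}_1$ at $k$ points lying on distinct fibres, and contracting the exceptional section of $\mathbb{F}_1$ to a point $q$, gives a birational morphism $\eta:S\to\Pn$ blowing up $q,p_1,\dots,p_k$ under which the fibres of $\pi$ become the lines through $q$. Writing $\Pic{S}=\mathbb{Z}H\oplus\mathbb{Z}E_q\oplus\bigoplus_i\mathbb{Z}E_i$ with $K_S=-3H+E_q+\sum_i E_i$ and $f=H-E_q$, a $(-2)$-section has class $s=dH-mE_q-\sum_i m_iE_i$ subject to $s\cdot f=d-m=1$ and $s^2=d^2-m^2-\sum_i m_i^2=-2$, while its degree is $s\cdot H=1+s\cdot E_q$. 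Choosing the contracted section $E_q$ to meet $s_0,\dots,s_3$ as little as possible, and using that each $s_j\cong\mathbb{P}^1$ is smooth (so $\eta(s_j)$ is reduced with ordinary multiplicities), the admissible images are lines through exactly three of the $p_i$ and avoiding $q$, and conics through $q$ and exactly five of the $p_i$; the $V$-invariance of the pairwise numbers $s_j\cdot s_{j'}$ then forces, for $k=6$, four lines, and for $k=7$, three lines together with one conic through $q$.

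The remaining, and hardest, step is the incidence bookkeeping turning these images into the precise pictures $(b1)$ and $(b2)$. For $k=6$ the four lines meet pairwise in $\binom{4}{2}=6$ points, which must be exactly $p_1,\dots,p_6$, each line passing through its three vertices, giving the complete quadrilateral of $(b1)$. For $k=7$ the self-intersection equations force each line through $3$ of the $p_i$ and the conic through $5$, so the $9$ pairwise intersections of the four curves contain the $7$ centres with two omitted; the requirement that no two centres be collinear with $q$ (otherwise a fibre would acquire three components) determines which two are omitted, and the crucial point is that these two omitted points are collinear with $q$. I expect this collinearity to be the main obstacle: the route I foresee is to observe that the special involution $\sigma_i$ (the one with $a_i=3$) interchanges the two curves crossing at each omitted point, so both omitted points lie on its irreducible fixed bisection $C_i$, after which $C_i\cdot f=2$ combined with the incidence pattern should pin the two points onto a common fibre, i.e.\ onto a line through $q$. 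Verifying this, together with the transversality of the four curves (equivalently, that the $p_i$ are ordinary nodes of their union, the \emph{croisements simples transverses} condition), completes the identification of $(b2)$.
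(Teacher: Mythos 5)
Your first half is correct and is exactly the paper's argument: the reduction via Theorem~\ref{Theo:Class} and Proposition~\ref{Prp:estdPezzofibres}, the identification of case~$(a)$ with the rationality of a curve fixed by an involution (genus $a_i-1$, by Proposition~\ref{Prp:Z2FibRam}), and the production of four sections of self-intersection~$-2$ as the $V$-orbit of one such section. The genuine gap is the construction of $\eta$, which is the core of the paper's proof. You start from a presentation of $S$ as a blow-up of $\mathbb{F}_1$ (itself unjustified: an arbitrary contraction of one component in each singular fibre lands in some $\mathbb{F}_n$, not necessarily $\mathbb{F}_1$) and then enumerate numerical classes of $(-2)$-sections; but which plane curves the four sections become depends entirely on the choice of contraction, and nothing in your argument produces \emph{one} morphism $\eta$ realizing $(b1)$ or $(b2)$ for all four sections simultaneously --- ``choosing $E_q$ to meet $s_0,\dots,s_3$ as little as possible'' is not a construction, and your enumeration also misses the class $d=3$, $m=2$ (cubics with a double point at $q$ through all seven $p_i$), which your constraints allow when $k=7$. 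The paper's key step, absent from your proposal, is an \emph{equivariant} choice of contraction: split the singular fibres into $f_1,f_2,f_3$ according to which involution $\sigma_i$ preserves their components (so $|f_i|=k-2a_i$); contract, in exactly one fibre of each $f_i$, the component meeting $s_0$, and in every other fibre the component not meeting $s_0$. Counting how many contracted curves meet each section shows that $s_0,\dots,s_3$ map to sections of self-intersection $1,1,1,1$ (for $k=6$), respectively $1,1,1,3$ (for $k=7$); this forces the target to be $\mathbb{F}_1$, and contracting its exceptional section makes the images four lines avoiding $q$, respectively three lines and a conic through $q$, with the $p_i$ automatically at crossings of exactly two image curves.

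Your route for the collinearity in $(b2)$ is not just unfinished but misdirected on two counts. First, the involution exchanging the two sections that cross at an omitted point need not a priori fix that point (it could swap the two residual intersection points), so you cannot immediately place the omitted points on its fixed bisection. Second, even granting that, a bisection meets \emph{every} fibre in two points, so membership in $C_i$ together with $C_i\cdot f=2$ cannot ``pin the two points onto a common fibre''. The argument that works is an orbit argument: the residual intersection locus of the four sections (total multiplicity $9-7=2$) is invariant under $V=\Aut(S/\mathbb{P}^1)$; $V$ preserves each fibre and acts on every smooth fibre as $(\Z{2})^2\subset\PGL(2,\C)=\Aut(\mathbb{P}^1)$, hence without common fixed point; so the residual locus can be neither a single point nor two points in distinct fibres, and must be a two-point $V$-orbit inside one fibre, i.e.\ two transverse crossings aligned with $q$ (this also yields the transversality you flag as unverified). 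Note, incidentally, that this shows the four sections are \emph{not} disjoint when $k=7$ --- their residual intersection is exactly these two points --- so the disjointness the paper's own wording appeals to at that spot is neither available nor needed, and the $V$-orbit reasoning above is what actually carries the step.
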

\begin{proof}
D'apr\`es le th\'eor\`eme~\ref{Theo:Class}, le groupe $\Aut(S,\pi)$ est un sous-groupe alg\'ebrique maximal du groupe de Cremona si et seulement si $S$ n'est pas une surface de del Pezzo. La proposition~\ref{Prp:estdPezzofibres} montre que ceci est vrai si et seulement si une des situations suivantes se pr\'esente:
$(i)$ une courbe fix\'ee par une involution de $\Aut(S,\pi)$ est rationnelle;
$(ii)$ $S$ contient $4$ sections d'auto-intersection~$-2$. La condition $(i)$ est \'equivalente \`a ce que $\{a_1,a_2,a_3\}$ contiennent un $1$. Comme $a_1+a_2+a_3=k$ (proposition~\ref{Prp:Z2FibRam}), on retrouve les $5$ possibilit\'es d\'ecrites en $(a)$.

Supposons maintenant que $S$ contient une section $s_0$ d'auto-intersection $-2$ et que $a_1=a_2=2$ (ce qui implique que $a_3=2$ si $k=6$ ou que $a_3=3$ si $k=7$) et d\'emontrons l'existence d'un morphisme birationnel $\eta\rightarrow \Pn$ satisfaisant les conditions de $(b)$, ce qui terminera la preuve.

Notons $\sigma_1,\sigma_2,\sigma_3$ les trois involutions de $\Aut(S/\mathbb{P}^1)$, qui permutent les composantes de respectivement $a_1,a_2$ et $a_3$ fibres singuli\`eres et notons $s_i=\sigma_i(s_0)$ pour $i=1,..,3$. D\'ecomposons l'ensemble des $k$ fibres singuli\`eres de $\pi$ en trois parties disjointes $f_{1}$, $f_{2}$, $f_{3}$, telles que chacune des composantes de chaque fibre de $f_i$ est laiss\'ee invariante par $\sigma_i$. En particulier, si $k=6$ chacune des parties $f_i$ contient deux fibres et si $k=7$, alors $f_1,f_2,f_3$ contiennent respectivement $1,3,3$ fibres.

On construit un morphisme birationnel de fibr\'es en coniques $\nu:S\rightarrow \mathbb{F}_n$ qui contracte une composante dans chaque fibre singuli\`ere de $\pi$. Pour cela, on choisit dans chaque $f_i$ une des fibres o\`u l'on contracte la composante qui touche $s_0$ (et donc aussi $s_i$) et dans les autres fibres de $f_i$, on contracte la composante qui ne touche pas $s_0$ (et donc ne touche pas $s_i$). En calculant combien de courbes contract\'ees touchent chacune des sections, on voit que $s_0,s_1,s_2$ sont envoy\'ees sur des sections de $\mathbb{F}_n$ d'auto-intersection $1$ et que $s_3$ est envoy\'ee sur une section d'auto-intersection $1$ si $k=6$ et $3$ si $k=7$. L'existence de sections d'auto-intersection $1$ implique que $n=1$;
la contraction de la section d'auto-intersection~$-1$ sur un point $q\in\Pn$ induit un morphisme birationnel $\eta:S\rightarrow \Pn$ qui envoie une fibre g\'en\'erale de $\pi$ sur une droite passant par $q$. Les sections d'auto-intersection $1$ (respectivement $3$) de $\mathbb{F}_1$ sont envoy\'ees sur des droites ne passant pas par $q$ (respectivement sur des coniques passant par $q$). De plus, les $k$ courbes contract\'ees par $\nu$ touchant chacune exactement $2$ sections, ces $k$ courbes sont envoy\'ees sur des points \`a l'intersection d'exactement $2$ des courbes images. Ceci implique que trois de ces courbes ne s'intersectent pas en un m\^eme point. Lorsque $k=6$, les $4$ sections sont envoy\'ees sur $4$ droites ne passant pas par $q$, celles-ci s'intersectent en au plus $6$ points, il n'y pas d'intersection triple et on obtient la situation $(b1)$. Lorque $k=7$, les courbes images sont $3$ droites et une conique, qui ont au plus $9$ points d'intersection, $7$ \'etant des intersections transverses (les point $p_1,..,p_7$). Le(s) point(s) d'intersection restant deviennent sur $S$ des points de m\^eme type d'intersection o\`u $\Aut(S/\mathbb{P}^1)$ agit. Les $4$ sections \'etant disjointes et permut\'ees transitivement par $\Aut(S/\mathbb{P}^1)\cong (\Z{2})^2$, il doit y avoir $2$ points, permut\'es par $\Aut(S/\mathbb{P}^1)$, qui donnent sur $\Pn$ deux points align\'es avec $q$.
\qed\end{proof}

Rappelons le r\'esultat classique suivant, que nous utiliserons pour montrer l'existence de $(\Z{2})^2$-fibr\'es en coniques correspondant aux situations $(b1)$ ou $(b2)$ de la proposition~\ref{Z267fibresGeometrie}.
\begin{lemm}\label{Lem:invoDroites}
Soit $\eta:S\rightarrow \Pn$ l'\'eclatement des points $q,p_1,p_2,p_3,p_4\in \Pn$, tels que $S$ soit de del Pezzo et soit $\pi:S\rightarrow \mathbb{P}^1$ la fibration en coniques induite par la projection de $\Pn$ depuis $q$.

Alors, il existe une involution $\sigma\in \Aut(S,\pi)$, qui agit trivalement sur la base de la fibration et qui \'echange les composantes de chacune des $4$ fibres singuli\`eres.
En notant $E_q=\eta^{-1}(q)$, $E_1=\eta^{-1}(p_1)$,..., $E_4=\eta^{-1}(p_4)$ et $L$ le transform\'e d'une droite g\'en\'erale de $\Pn$, l'action de $\sigma$ sur $\Pic{S}$ est donn\'ee par
\begin{center}
$\left(\begin{array}{rrrrrr}
-1 & -1 & -1& -1 & -1 & -2\\
-1& -1 & 0& 0 & 0 & -1\\
-1 & 0& -1& 0 & 0 & -1\\
-1 & 0& 0& -1 & 0 & -1\\
-1 & 0& 0& 0& -1 & -1\\
2 & 1& 1& 1 & 1 & 3\end{array}\right)$\end{center} relativement \`a la base $(E_q,E_1,E_2,E_3,E_4,L)$.
Le transform\'e strict sur $S$ de la droite passant par deux points de $\{p_1,p_2,p_3,p_4\}$ est envoy\'e par $\sigma$ sur le transform\'e strict de la droite passant par les deux autres points.
\end{lemm}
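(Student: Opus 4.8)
The plan is to exhibit $\sigma$ as a de Jonquières involution of degree $3$ and then to read off its action on $\Pic{S}$ purely from the way it permutes the components of the four singular fibres, so that essentially no calculation beyond a $6\times 6$ linear system is needed.

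First I would record the geometry of the fibration. The general fibre of $\pi$ is the strict transform of a line through $q$, so the fibre class is $f=L-E_q$. Since $S$ is del Pezzo, the five points $q,p_1,\dots,p_4$ are in general position, hence the four lines $\overline{qp_i}$ are distinct and $\pi$ has exactly four singular fibres, each of the form $E_i+\tilde L_i$, where $E_i=\eta^{-1}(p_i)$ and $\tilde L_i=L-E_q-E_i$ is the strict transform of $\overline{qp_i}$. An involution $\sigma\in\Aut(S,\pi)$ acting trivially on the base and swapping the two components of every singular fibre must therefore satisfy $\sigma(E_i)=\tilde L_i=f-E_i=L-E_q-E_i$, together with $\sigma(f)=f$ and $\sigma(K_S)=K_S$, where $K_S=-3L+E_q+\sum_{i=1}^4 E_i$.

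For the existence of such a $\sigma$ I would use the homaloidal net of plane cubics having a double point at $q$ and passing through $p_1,\dots,p_4$: this system has dimension $9-3-4=2$ and defines a de Jonquières transformation $\sigma_0:\Pn\dashrightarrow\Pn$ of degree $3$ preserving every line through $q$ (hence fixing the base of $\pi$), which is an involution whose base points — $q$ with multiplicity $2$ and $p_1,\dots,p_4$ simple — coincide with those of its inverse. As these are exactly the five points blown up by $\eta$, the map $\sigma_0$ lifts through $\eta$ to a biregular involution $\sigma\in\Aut(S,\pi)$. Equivalently, and this is the cleaner statement to lean on, $S$ is a del Pezzo surface of degree $4$, so by (\ref{SubSec:dPdeg4}) its automorphism group contains the diagonal $(\Z{2})^4$ whose involutions fix elliptic curves; $\sigma$ is the one whose fixed elliptic curve is the bisection of $\pi$ ramified over the four critical values. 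Being a bisection forces $\sigma$ to preserve each fibre, and meeting each singular fibre once on each component forces $\sigma$ to interchange $E_i$ and $\tilde L_i$. This step — existence, regularity, and above all the fact that $\sigma$ \emph{swaps} rather than fixes the two components (equivalently, that the determinant of the associated involution of $\PGL(2,\C(x))$ is nontrivial, with simple zeros over the four critical values) — is the only genuine obstacle; everything else is formal.

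Once $\sigma$ is in hand the matrix is determined by the three conditions above. Solving $\sigma(f)=f$ and $\sigma(K_S)=K_S$ with $\sigma(E_i)=L-E_q-E_i$ gives $\sigma(E_q)=2L-E_q-\sum_i E_i$ and $\sigma(L)=3L-2E_q-\sum_i E_i$, which is exactly the displayed matrix in the basis $(E_q,E_1,E_2,E_3,E_4,L)$, and a direct substitution confirms $\sigma^2=\mathrm{id}$. Finally the line-swapping is a one-line consequence: the line through $p_1,p_2$ has strict transform $L-E_1-E_2$, and
\[\sigma(L-E_1-E_2)=\sigma(L)-\sigma(E_1)-\sigma(E_2)=L-E_3-E_4,\]
the strict transform of the line through $p_3,p_4$, with the remaining pairings treated identically.
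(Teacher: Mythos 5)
The formal half of your argument is fine and coincides with the paper's: once one knows that $\sigma\in\Aut(S,\pi)$ exists, acts trivially on the base and exchanges the two components of every singular fibre, the relations $\sigma(E_i)=L-E_q-E_i$, $\sigma(L-E_q)=L-E_q$, $\sigma(K_S)=K_S$ determine the matrix, and $\sigma(L-E_1-E_2)=L-E_3-E_4$ gives the last assertion. The gap is in the existence step, which you yourself flag as the only genuine obstacle, and neither of your two constructions closes it. For the homaloidal net: a net of cubics determines a Cremona transformation only up to post-composition with $\PGL(3,\C)$, so "the" map it defines is a whole coset; nothing in your argument exhibits a representative that is an involution, nor one acting trivially on the pencil of lines through $q$ and contracting each line $\overline{qp_i}$ to $p_i$ itself rather than to some $p_j$ with $j\neq i$ (which would give a nontrivial action on the base). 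The paper settles exactly this point by the classical pointwise construction (cited from Bayle--Beauville): take a cubic $C$ through $q$ tangent at each $p_i$ to the line $\overline{qp_i}$, and let $\sigma'$ act on each line $\ell$ through $q$ as the unique involution of $\ell$ fixing the two points of $(C\cap\ell)\setminus\{q\}$. This is an involution by construction, visibly preserves each fibre, has base points exactly $q,p_1,\dots,p_4$, and the component swap follows because the strict transform of $C$ passes through the four nodes of the singular fibres ($C\rightarrow\mathbb{P}^1$ is ramified exactly over the four critical values).

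Your fallback via the degree-$4$ del Pezzo structure contains a false inference. All five coordinate involutions of $T\cong(\Z{2})^4$ fix elliptic curves lying in $|-K_S|$, and \emph{every} curve in $|-K_S|$ is a bisection of $\pi$ (it has intersection $2$ with $L-E_q$); yet only one of the five involutions lies in $\Aut(S,\pi)$. Indeed, the involution "centred at $p_j$" ($j\neq 0$, with the analogous matrix in which $E_q$ and $E_j$ trade roles) sends the class $L-E_q$ to $2L-E_1-E_2-E_3-E_4$, i.e.\ it exchanges the pencil of lines through $q$ with the paired pencil of conics, so it does not even preserve the fibration. Hence "being a bisection forces $\sigma$ to preserve each fibre" is wrong: fixing a bisection pointwise forces trivial action on the base only \emph{after} one knows that $\sigma$ maps fibres of $\pi$ to fibres of $\pi$. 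Your selection criterion (fixed curve ramified over the four critical values) does single out the correct element, but you would still need to prove that an element of $T$ with this property exists and that it preserves $\pi$ — and that is essentially the content of the paper's explicit construction, not a consequence of the subsection on degree-$4$ surfaces that you invoke.
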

\begin{proof}Notons $C$ la cubique de $\Pn$ passant par $q$, et par chaque point $p_i$, de mani\`ere tangente \`a la droite passant par $q$ et $p_i$. Notons $\sigma'$ l'involution birationnelle de $\Pn$ qui fixe chaque point de $C$ et pr\'eserve le pinceau de droites par~$q$. La restriction de $\sigma'$ \`a une droite g\'en\'erale passant par $q$ est une involution qui fixe les deux autres points de la droite qui appartiennent \`a $C$. Cette construction classique d'involutions de de Jonqui\`eres se trouve par exemple dans \cite[Exemple 2.4c]{bib:BaB}. Alors, $\sigma'$ a exactement $5$ points-bases, qui sont $q,p_1,...,p_4$ et donc  $\sigma=\eta^{-1}\sigma'\eta\in \Aut(S,\pi)$. 

Comme $C$ est une courbe elliptique et une bisection de $\pi$, elle est ramifi\'ee en $4$ points, ce qui implique que $\sigma$ permute les deux composantes de chacune des $4$ fibres singuli\`ere de $\pi$, donc $\sigma(E_i)=L-E_q-E_i$ pour $i=1,..,4$. Comme $\sigma$ est un automorphisme et pr\'eserve la fibration on a $\sigma(K_S)=K_S=-3L+E_q+E_1+...+E_4$ et $\sigma(L-E_q)=L-E_q$, ce qui implique la matrice de l'\'enonc\'e. 

Il existe une unique courbe irr\'eductible de $S$ \'equivalente \`a $L-E_1-E_2$, qui est le transform\'e strict de la droite passant par $p_1$ et $p_2$, la situation est similaire en prenant deux autres points. La derni\`ere assertion suit donc du calcul direct $\sigma(L-E_1-E_2)=L-E_3-E_4$.
\qed\end{proof}

D\'emontrons maintenant l'existence de $(\Z{2})^2$-fibr\'es en coniques correspondant \`a la situation $(b1)$ de la proposition~\ref{Z267fibresGeometrie}.
\begin{prop}\label{prop:Fibre4droites}
Soit $Q\subset \Pn$ la r\'eunion de $4$ droites distinctes, telles que trois ne s'intersectent pas en un m\^eme point, et soit $q\in\Pn\backslash Q$.
Notons $\eta:S\rightarrow \Pn$ l'\'eclatement de $q$ et des $6$ points  singuliers de $Q$.

Alors, la projection de $\Pn$ depuis $q$ se remonte \`a une fibration en coniques $\pi:S\rightarrow \mathbb{P}^1$, qui est un $(\Z{2})^2$-fibr\'e en coniques.  Le transform\'e strict de $Q$ sur $S$ donne $4$ sections disjointes, d'auto-intersection~$-2$. Le groupe $G=\Aut(S,\pi)$ est un sous-groupe alg\'ebrique maximal du groupe de Cremona.
\end{prop}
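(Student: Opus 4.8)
L'idée est de reconnaître $(S,\pi)$ comme un $(\Z{2})^2$-fibré en coniques sur une surface qui n'est pas de del Pezzo, afin de conclure par le théorème~\ref{Theo:Class} (cas 2(c)). Commençons par la structure de fibré en coniques. L'éclatement de $q$ fournit $\mathbb{F}_1$, dont la fibration en droites provient de la projection depuis $q$. Puisque $q\notin Q$ et, quitte à supposer que $q$ évite les trois diagonales du quadrilatère complet, aucune droite joignant deux des six points doubles de $Q$ ne passe par $q$, ces six points se relèvent dans six fibres distinctes de $\mathbb{F}_1$, hors de la section exceptionnelle; leur éclatement donne un fibré en coniques $\pi:S\rightarrow\mathbb{P}^1$ à exactement $6$ fibres singulières. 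Le transformé strict $\tilde L_i$ de chaque droite $L_i$ est une section de $\pi$, d'auto-intersection $1-3=-2$ (car $L_i$ contient trois des six points doubles), et les $\tilde L_i$ sont deux à deux disjoints (les points doubles où les droites se rencontrent étant éclatés). On obtient ainsi quatre sections disjointes d'auto-intersection~$-2$; en particulier $S$ n'est pas de del Pezzo.

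Je construirais ensuite les trois involutions de $\Aut(S/\mathbb{P}^1)$. À chacune des trois partitions de $\{L_1,L_2,L_3,L_4\}$ en deux paires est associée une paire de points doubles ne partageant aucune droite (l'intersection au sein de chaque paire de droites); en appliquant le lemme~\ref{Lem:invoDroites} à $q$ et aux quatre points doubles restants (en position générale avec $q$, aucun triplet n'étant aligné), on obtient une involution de de Jonquières $\sigma'$ de $\Pn$, préservant le pinceau des droites par $q$, triviale sur la base de $\pi$, et fixant une cubique lisse $C$. Ses points-base étant éclatés par $\eta$ et $\sigma'$ fixant les deux points doubles diagonaux restants, $\sigma'$ se relève en $\sigma\in\Aut(S/\mathbb{P}^1)$. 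La dernière assertion du lemme~\ref{Lem:invoDroites} donne l'action de $\sigma$ sur les quatre sections, à savoir l'échange des deux droites de chaque paire; un calcul d'incidence montre alors que $\sigma$ échange les composantes des quatre fibres singulières au-dessus des points doubles non diagonaux et préserve celles des deux autres. On obtient $\sigma_1,\sigma_2,\sigma_3$ induisant sur les quatre sections les permutations $(12)(34)$, $(13)(24)$, $(14)(23)$, chaque $\sigma_i$ fixant le transformé strict $C_i$ de la cubique associée, courbe irréductible et revêtement double de $\mathbb{P}^1$ via $\pi$, ramifié au-dessus de $4$ points.

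Il reste à établir que $\Aut(S/\mathbb{P}^1)=V:=\{1,\sigma_1,\sigma_2,\sigma_3\}\cong(\Z{2})^2$. Comme $\sigma_3^{-1}\sigma_1\sigma_2$ fixe les quatre sections et agit trivialement sur la base, c'est un élément de $\PGL(2,\C(x))$ qui fixe quatre points distincts d'une fibre générale, donc l'identité; ainsi $\sigma_1\sigma_2=\sigma_3$, et $\sigma_3$ étant une involution, $\sigma_1$ et $\sigma_2$ commutent, d'où $V\cong(\Z{2})^2$. Si $S$ était un fibré exceptionnel, on aurait $\Aut(S/\mathbb{P}^1)\cong\C^{*}\rtimes\Z{2}$ (lemme~\ref{Lem:AutExcGDelta}); le sous-groupe $V$ d'ordre $4$ rencontrerait alors le sous-groupe $\C^{*}$ d'indice~$2$ en un élément non trivial, lequel agit trivialement sur $\Pic{S}$ (lemme~\ref{Lem:AutExcGDelta}). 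Or $\sigma_1,\sigma_2,\sigma_3$ agissent tous non trivialement sur $\Pic{S}$, puisqu'ils échangent les composantes de certaines fibres singulières: contradiction. Le lemme~\ref{Lem:ContenuDansExc} entraîne donc $\Aut(S/\mathbb{P}^1)\cong(\Z{2})^r$, et comme $V\subseteq\Aut(S/\mathbb{P}^1)$ avec $r\leq 2$, on a $\Aut(S/\mathbb{P}^1)=V$. Ainsi $(S,\pi)$ est un $(\Z{2})^2$-fibré en coniques dont la surface n'est pas de del Pezzo, et le théorème~\ref{Theo:Class} donne la maximalité de $\Aut(S,\pi)$.

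Le point le plus délicat sera le relèvement précis des involutions de de Jonquières du lemme~\ref{Lem:invoDroites} à des automorphismes de $S$ et la détermination exacte des fibres dont elles échangent les composantes, ainsi que l'exclusion du cas exceptionnel; le reste (structure de fibré en coniques, auto-intersections, application du théorème~\ref{Theo:Class}) se réduit à des vérifications d'intersection directes.
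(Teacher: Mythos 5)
Votre d\'emonstration est correcte et suit pour l'essentiel la m\^eme d\'emarche que celle de l'article: les trois involutions proviennent du lemme~\ref{Lem:invoDroites} appliqu\'e \`a $q$ et \`a quatre des six points doubles (deux paires de points oppos\'es), pour chacun des trois choix possibles, puis elles se rel\`event \`a $S$ en fixant la paire restante, et la maximalit\'e suit du th\'eor\`eme~\ref{Theo:Class} gr\^ace aux quatre sections d'auto-intersection~$-2$. Vos v\'erifications suppl\'ementaires -- l'identit\'e $\sigma_1\sigma_2=\sigma_3$ obtenue via l'action sur les quatre sections, l'exclusion du cas exceptionnel par le lemme~\ref{Lem:ContenuDansExc}, et la restriction explicite que $q$ \'evite les trois diagonales du quadrilat\`ere (restriction dont la preuve de l'article a elle aussi implicitement besoin, puisque le lemme~\ref{Lem:invoDroites} exige que l'\'eclatement des cinq points soit une surface de del Pezzo) -- ne font qu'expliciter ce que l'article r\'esume par \emph{on en d\'eduit}.
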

\begin{proof}
Notons $\Delta \subset Q$ l'ensemble des $6$ points singuliers de $Q$. Cet ensemble contient trois paires: chaque point appartient \`a $2$ droites, on lui associe le point d'intersection des deux autres droites. 

L'\'eclatement de deux paires de $\Delta$ et du point $q$ donne une surface de del Pezzo de degr\'e~$4$. Sur celle-ci, il existe un automorphisme d'ordre $2$ qui agit trivialement sur la base de la fibration en coniques induite par la projection de de $\Pn$ depuis $q$ et permute les composantes des $4$ fibres singuli\`eres (voir le lemme~\ref{Lem:invoDroites}). Cette involution agit sur le transform\'e strict de $Q$ (lemme~\ref{Lem:invoDroites}) et donc fixe les deux points de la troisi\`eme paire. L'involution se remonte alors en un \'el\'ement de $\Aut(S,\pi)$ qui permute les composantes de $4$ fibres singuli\`eres.

En faisant ceci pour les trois choix possibles de deux paires, on en d\'eduit que $(S,\pi)$ est un $(\Z{2})^2$-fibr\'e exceptionnel. Chaque droite de $Q$ passant par $2$ points \'eclat\'es par $\eta$, le transform\'e strict de $Q$ sur $S$ donne $4$ sections disjointes, d'auto-intersection~$-2$. La surface n'est donc pas de del Pezzo et alors $\Aut(S,\pi)$ est un sous-groupe alg\'ebrique maximal du groupe de Cremona (th\'eor\`eme~\ref{Theo:Class}).
\qed\end{proof}

Nous faisons maintenant de m\^eme pour la situation $(b2)$ de la proposition~\ref{Z267fibresGeometrie}.

\begin{prop}\label{prop:Fibre3droitesUneConique}
Soit $Q\subset \Pn$ un ensemble de trois droites et une conique, tel que trois des quatre courbes ne s'intersectent pas en un m\^eme point et que la conique intersecte chaque droite en deux points distincts.
Parmi les $9$ points singuliers de $Q$ on en choisit deux: $d_1,d_2$ tels que $d_1$ soit sur deux droites et $d_2$ soit sur la $3$-\`eme droite et sur la conique. On note $q$ le point de la conique qui est align\'e avec $d_1$ et $d_2$ et $\eta:S\rightarrow \Pn$ l'\'eclatement de $q$ et des $7$ points  singuliers de $Q$ qui ne sont ni $d_1$ ni $d_2$ (voir la figure \ref{FigCLaLbLc}). 
Supposons que $q$ n'est pas align\'e avec deux points singuliers de $Q$ (hormis $d_1$ et $d_2$). 

Alors, la projection de $\Pn$ depuis $q$ se remonte \`a une fibration en coniques $\pi:S\rightarrow \mathbb{P}^1$, qui est un $(\Z{2})^2$-fibr\'e en coniques. Le transform\'e strict de $Q$ sur $S$ donne $4$ sections disjointes, d'auto-intersection~$-2$. Le groupe $G=\Aut(S,\pi)$ est un sous-groupe alg\'ebrique maximal du groupe de Cremona.
\end{prop}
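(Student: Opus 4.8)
The plan is to mimic the proof of Proposition~\ref{prop:Fibre4droites}, replacing the four lines by three lines and a conic, and to read off all the numerical data from the incidence of $Q$ with the pencil of lines through $q$.

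First I would check that $\eta$ really produces a conic bundle. Since $q$ lies on the conic but not on the three lines, the projection $\Pn\dashrightarrow\mathbb{P}^1$ from $q$ becomes a morphism $\pi:S\to\mathbb{P}^1$ once $q$ is blown up, its fibres being the strict transforms of the lines through $q$. A fibre is singular exactly over a line through $q$ carrying one of the seven blown-up points, and the hypothesis that $q$ is not aligned with two singular points of $Q$ other than $d_1,d_2$ guarantees that these seven points sit on seven distinct such lines; hence $\pi$ has exactly $k=7$ singular fibres and $(K_S)^2=8-7=1$. Each of the three lines passes through three of the blown-up points, and the conic passes through $q$ and five further blown-up points, so each of the four components of the strict transform of $Q$ has self-intersection $-2$ and is a section of $\pi$; these four sections form a single orbit under $\Aut(S/\mathbb{P}^1)$. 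In particular $S$ carries $(-2)$-curves and is therefore not a del Pezzo surface.

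The heart of the argument is the construction of $\Aut(S/\mathbb{P}^1)\cong(\Z{2})^2$. As in Proposition~\ref{prop:Fibre4droites} I would produce the involutions one at a time from Lemma~\ref{Lem:invoDroites}: for a suitable set of four of the seven points, the blow-up of $q$ together with these four points is a del Pezzo surface of degree~$4$ carrying the de Jonquières involution of Lemma~\ref{Lem:invoDroites}, which preserves the pencil of lines through $q$, exchanges the two components of the four corresponding singular fibres, and permutes the four curves of $Q$. Using the collinearity of $q,d_1,d_2$ and the genericity hypothesis, one checks that this involution sends the three remaining blown-up points to blown-up points (while fixing or exchanging the un-blown-up points $d_1,d_2$), so that it lifts to an element of $\Aut(S,\pi)$ acting trivially on the base. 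Performing this for two suitable four-point subsets yields two commuting involutions $\sigma_1,\sigma_2$, each exchanging the components of four singular fibres; their product $\sigma_3=\sigma_1\sigma_2$ then exchanges the components of the remaining fibres. By Proposition~\ref{Prp:Z2FibRam}(2) every singular fibre has its components exchanged by exactly two of $\sigma_1,\sigma_2,\sigma_3$, which forces the distribution $\{a_1,a_2,a_3\}=\{2,2,3\}$ announced in Proposition~\ref{Z267fibresGeometrie}$(b2)$ and exhibits $(S,\pi)$ as a $(\Z{2})^2$-fibré en coniques with $k=7$.

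The hard part will be the bookkeeping in this middle step: one must pin down exactly which four-point subsets give del Pezzo surfaces of degree~$4$ (no three of the five points $q,p_{i_1},\dots,p_{i_4}$ collinear, which is where the genericity on $q$ enters), verify that the resulting de Jonquières involutions lift to \emph{biregular} automorphisms of $S$ rather than merely birational ones, and check that $\sigma_1$ and $\sigma_2$ commute so as to generate a copy of $(\Z{2})^2$ and not a larger group — this is precisely where the position of $q$ on the line $d_1d_2$ is used. Once the $(\Z{2})^2$-fibré-en-coniques structure is in place and $S$ is known not to be del Pezzo, the maximality of $G=\Aut(S,\pi)$ in the Cremona group is immediate from Theorem~\ref{Theo:Class}, the four sections of self-intersection~$-2$ having already been exhibited above.
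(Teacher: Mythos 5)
Your strategy is the same as the paper's: imitate Proposition~\ref{prop:Fibre4droites}, producing involutions of $\Aut(S/\mathbb{P}^1)$ from Lemma~\ref{Lem:invoDroites} applied to degree-$4$ del Pezzo surfaces obtained by blowing up $q$ and four of the seven points, then conclude from the four $(-2)$-sections and Theorem~\ref{Theo:Class}. Your preliminary steps (seven singular fibres via the genericity of $q$, $(K_S)^2=1$, the four sections of self-intersection $-2$) are correct. But the step you explicitly set aside as ``bookkeeping'' is the entire mathematical content of the proposition, and your proposal never supplies it: you do not identify which four-point subsets work, and you do not prove that the resulting de Jonqui\`eres involutions lift to biregular automorphisms of $S$. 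This is not routine. Write $L_a,L_b$ for the two lines through $d_1$, $L_c$ for the third line, $C$ for the conic, $\{a_1,a_2\}=L_a\cap C$, $\{b_1,b_2\}=L_b\cap C$, $a_3=L_a\cap L_c$, $b_3=L_b\cap L_c$, and $c$ for the blown-up point of $L_c\cap C$; the paper takes the subsets $\{a_1,a_2,b_3,c\}$ and $\{b_1,b_2,a_3,c\}$. For the first involution $\sigma_a$, the last assertion of Lemma~\ref{Lem:invoDroites} only gives the exchange $L_a\leftrightarrow L_c$ (lines through pairs of base points); this fixes $a_3=L_a\cap L_c$ and, because fibres are preserved and $q,d_1,d_2$ are aligned, sends $d_1$ to $d_2$. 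It says nothing about $L_b$ and $C$, so it does not yet show that $b_1,b_2$ are fixed, i.e. that $\sigma_a$ lifts. For that one needs the matrix of Lemma~\ref{Lem:invoDroites}: the class $L-E_{b_3}$ of $L_b$ is sent to that of a conic through $q,a_1,a_2,c$; since this image also passes through $\sigma_a(d_1)=d_2$ and five points determine a unique conic, it equals $C$, so $\sigma_a$ exchanges $L_b$ and $C$ and fixes $b_1,b_2$. This is exactly where the alignment hypothesis does its work, and without this argument the lifting claim in your middle step is unsupported.

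Two further points. First, the commutativity of the two involutions is not where the position of $q$ on the line $d_1d_2$ is used; a correct argument is that $(\sigma_1\sigma_2)^2$ acts trivially on $\Pic{S}$, so if it were nontrivial Lemma~\ref{Lem:ContenuDansExc} would force $(S,\pi)$ to be an exceptional conic bundle, which is impossible here because exceptional bundles have an even number of singular fibres while $k=7$; hence $\sigma_1$ and $\sigma_2$ generate $(\Z{2})^2$. Second, your appeal to Proposition~\ref{Prp:Z2FibRam}(2) to force the distribution $\{2,2,3\}$ presupposes that $(S,\pi)$ is already known to be a $(\Z{2})^2$-fibr\'e en coniques, which in particular requires the third involution $\sigma_1\sigma_2$ to fix an irreducible bisection; this follows because it exchanges the components of the six singular fibres over $a_1,a_2,a_3,b_1,b_2,b_3$, hence has nontrivial determinant in $\PGL(2,\C(x))$. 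In any case neither that distribution nor Proposition~\ref{Z267fibresGeometrie} is needed for the conclusion: once the $(\Z{2})^2$-structure is established and the $(-2)$-sections show $S$ is not del Pezzo, Theorem~\ref{Theo:Class} gives maximality, as you say.
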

\begin{proof}
Notons $L_a$ et $L_b$ les deux droites de $Q$ qui touchent $d_2$, $L_c$ la troisi\`eme droite et $C$ la conique. Les $7$ points singuliers de $Q$ \'eclat\'es par $\eta$ sont not\'es $a_1,a_2,a_3,b_1,b_2,b_3,c$ de la mani\`ere suivante: $a_1,a_2,a_3$ sont sur $L_a$; $b_1,b_2,b_3$ sont sur $L_b$; $a_3,b_3,c$ sont sur $L_c$ et $a_1,a_2,b_1,b_2,c$ sont sur $C$ (voir la figure \ref{FigCLaLbLc}).

\begin{figure}{\begin{center}
\includegraphics[width=7cm]{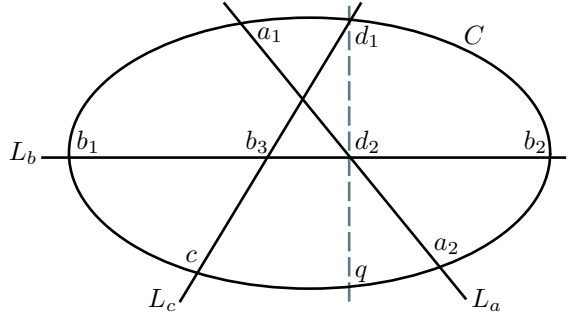}
\drawat{-29.5mm}{34.5mm}{$d_1$}%
\drawat{-42.5mm}{35.2mm}{$a_1$}%
\drawat{-19mm}{7mm}{$a_2$}%
\drawat{-14mm}{-0.5mm}{$L_a$}%
\drawat{-57mm}{-0.5mm}{$L_c$}%
\drawat{-75.5mm}{19mm}{$L_b$}%
\drawat{-15mm}{34.5mm}{$C$}%
\drawat{-52mm}{5.5mm}{$c$}%
\drawat{-44mm}{20.5mm}{$b_3$}%
\drawat{-7mm}{20.5mm}{$b_2$}%
\drawat{-66.5mm}{20.5mm}{$b_1$}%
\drawat{-29.5mm}{3.5mm}{$q$}%
\drawat{-29.5mm}{20.5mm}{$d_2$}%
\end{center}} \caption{\small La configuration des points sur $Q$.\label{FigCLaLbLc}}\end{figure}

L'\'eclatement des points $q$, $a_1$, $a_2$, $b_3$ et $c$ donne une surface de del Pezzo de degr\'e~$4$, o\`u il existe un automorphisme $\sigma_a$ d'ordre $2$, qui agit trivialement sur la base de la fibration en coniques induite par la projection de de $\Pn$ depuis $q$ et permute les composantes des $4$ fibres singuli\`eres (voir le lemme~\ref{Lem:invoDroites}). Alors, $\sigma_a$ permute les transform\'es stricts de $L_a$ et $L_c$ (lemme~\ref{Lem:invoDroites}) et permute donc les points $d_1$ et $d_2$ et fixe $b_3$. La transform\'ee stricte de $L_b$ est envoy\'ee sur la transform\'ee stricte d'une conique $D$ passant par $q$, $a_1$, $a_2$, $c$ (voir la matrice du lemme~\ref{Lem:invoDroites}); comme $L_b$ passe par $d_2$, la conique $D$ passe par $d_1$ et est donc \'egale \`a $C$. Les transform\'es stricts de $L_b$ et $C$ sont donc permut\'es par $\sigma_a$, qui fixe alors $b_1$ et $b_2$. L'involution $\sigma_a$ se remonte alors \`a un \'el\'ement d'ordre $2$ de $\Aut(S,\pi)$ qui agit trivialement sur la base de la fibration, permute les composantes de $4$ fibres singuli\`eres et agit sur le transform\'e strict de $Q$.

En faisant de m\^eme avec les points $q$, $b_1$, $b_2$, $a_3$ et $c$ (i.e. en \'echangeant le r\^ole de $L_a$ et $L_b$), on obtient une deuxi\`eme involution. Les deux engendrent un groupe isomorphe \`a $(\Z{2})^2$. On en d\'eduit que $(S,\pi)$ est un $(\Z{2})^2$-fibr\'e exceptionnel. Le transform\'e strict de $Q$ sur $S$ donne $4$ sections d'auto-intersection~$-2$. La surface n'est donc pas de del Pezzo et alors $\Aut(S,\pi)$ est un sous-groupe alg\'ebrique maximal du groupe de Cremona (th\'eor\`eme~\ref{Theo:Class}).
\qed\end{proof}

\subsection{Param\'etrisation par des vari\'et\'es alg\'ebriques}
La proposition \ref{Prp:Z2FibRam} implique les r\'esultats suivants.
Pour tout $n\geq 3$, l'ensemble des classes d'isomorphismes de $(\Z{2})^2$-fibr\'es en coniques avec $n$ fibres singuli\`eres est param\`etr\'e par une vari\'et\'e alg\'ebrique $\mathcal{C}_n$ de dimension $n-3$, correspondant aux triplets de points de ramification \`a action de $\Aut(\mathbb{P}^1)$ pr\`es; l'ensemble $\mathcal{C}_n$ a plusieurs composantes connexes (toutes de dimension $n-3$), correspondant aux d\'ecompositions possible de $n$ en trois entiers strictement positifs $a_1,a_2,a_3$ (pour un triplet $\{A_1,A_2,A_3\}$, le nombre de points de $A_i$ est $2a_i$ et $a_1+a_2+a_3=n$), on note $\mathcal{C}_{a_1,a_2,a_3}$ les composantes, avec $a_1\leq a_2\leq a_3$; par exemple $\mathcal{C}_6=\mathcal{C}_{1,1,4}\cup\mathcal{C}_{1,2,3}\cup\mathcal{C}_{2,2,2}$ et $\mathcal{C}_7=\mathcal{C}_{1,1,5}\cup\mathcal{C}_{1,2,4}\cup\mathcal{C}_{1,3,3}\cup\mathcal{C}_{2,2,3}$.

On note $\mathcal{C\! M}_n\subset \mathcal{C}_n$ le sous-ensemble donnant des sous-groupes alg\'ebriques maximaux du groupe de Cremona. Cet ensemble param\`etre les classes de conjugaisons de tels sous-groupes et correspond aux fibr\'es en coniques dont la surface ambiante n'est pas de del Pezzo (th\'eor\`eme \ref{Thm:Param}, famille $(11)$). Les propositions \ref{Z267fibresGeometrie}, \ref{prop:Fibre4droites} et \ref{prop:Fibre3droitesUneConique} permettent de d\'eterminer totalement $\mathcal{C\! M}_n$, nous r\'esumons ceci dans la proposition suivante.
\begin{prop}
Pour $n\leq 5$, $\mathcal{C\! M}_n$ est vide. Pour $n\geq 6$, l'ensemble $\mathcal{C\! M}_n$ est une vari\'et\'e alg\'ebrique. De plus:
\begin{enumerate}
\item
$\mathcal{C\! M}_6$ est la r\'eunion de  $\mathcal{C}_{1,1,4}$, $\mathcal{C}_{1,2,3}$ et de l'hypersurface de $\mathcal{C}_{2,2,2}$  correspondant aux fibr\'es d\'ecrits \`a la proposition \ref{prop:Fibre4droites}.
\item
$\mathcal{C\! M}_7$ est la r\'eunion de  $\mathcal{C}_{1,1,5}$, $\mathcal{C}_{1,2,4}$, $\mathcal{C}_{1,3,3}$ et de l'hypersurface de $\mathcal{C}_{2,2,3}$ correspondant aux fibr\'es d\'ecrits \`a la proposition \ref{prop:Fibre3droitesUneConique}.
\item
pour tout $n\geq 8$, $\mathcal{C\! M}_n=\mathcal{C}_n$.
\end{enumerate} 
\end{prop}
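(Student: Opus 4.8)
The plan is to read off $\mathcal{CM}_n$ from the del~Pezzo criterion of Proposition~\ref{Prp:estdPezzofibres}, using two facts established above: by Theorem~\ref{Thm:Param} (family~$(11)$) a $(\Z{2})^2$-fibr\'e en coniques with $n$ singular fibres represents a maximal algebraic subgroup exactly when its surface $S$ is not a del~Pezzo surface, and by Proposition~\ref{Prp:Z2FibRam} the component $\mathcal{C}_{a_1,a_2,a_3}\subset\mathcal{C}_n$ parametrizes the corresponding conjugacy classes through the ramification triplet. First I would dispose of the extreme cases. For $n\leq 5$, Proposition~\ref{Prp:estdPezzofibres}~$(2)$ gives that $S$ is always del~Pezzo, so no triplet qualifies and $\mathcal{CM}_n=\emptyset$. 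For $n\geq 8$, Proposition~\ref{Prp:estdPezzofibres}~$(1)$ gives that $S$ is never del~Pezzo, so every triplet qualifies and $\mathcal{CM}_n=\mathcal{C}_n$, which is already an algebraic variety.

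The substance lies in the two boundary values $n=6,7$, where I would treat each connected component $\mathcal{C}_{a_1,a_2,a_3}$ separately according to Proposition~\ref{Z267fibresGeometrie}. On the components where some $a_i=1$ (namely $\mathcal{C}_{1,1,4},\mathcal{C}_{1,2,3}$ for $n=6$, and $\mathcal{C}_{1,1,5},\mathcal{C}_{1,2,4},\mathcal{C}_{1,3,3}$ for $n=7$) the curve fixed by the corresponding involution is rational, so case~$(a)$ of Proposition~\ref{Z267fibresGeometrie} holds at every point and the whole component lies in $\mathcal{CM}_n$. On the remaining components $\mathcal{C}_{2,2,2}$ (for $n=6$) and $\mathcal{C}_{2,2,3}$ (for $n=7$) no $a_i$ equals $1$, so $S$ being non--del~Pezzo forces case~$(b)$, i.e.\ the existence of four $(-2)$-sections; by Proposition~\ref{Z267fibresGeometrie} these are precisely the bundles of situation~$(b1)$ (resp.\ $(b2)$), which Proposition~\ref{prop:Fibre4droites} (resp.\ Proposition~\ref{prop:Fibre3droitesUneConique}) realises as the blow-ups attached to a configuration of four lines (resp.\ three lines and a conic) in $\Pn$. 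This identifies the non--del~Pezzo locus inside $\mathcal{C}_{2,2,2}$ (resp.\ $\mathcal{C}_{2,2,3}$) with the image of these geometric constructions.

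It then remains to check that this image is a hypersurface, which I expect to be the main obstacle, and I would split it in two. For closedness: del~Pezzo is an open condition in the family (ampleness of $-K_S$ is open, equivalently the appearance of an irreducible curve of self-intersection $\leq -2$ is closed), so the non--del~Pezzo locus is a closed algebraic subset. For the codimension: the birational morphism $\eta\colon S\to\Pn$ of Proposition~\ref{Z267fibresGeometrie}~$(b)$ recovers from the conic bundle the point $q$ together with the line configuration, up to the finitely many labelling choices, so the assignment from configurations to bundles is generically finite and the dimension of its image equals the dimension of the moduli of configurations. For $(2,2,2)$ this dimension is $\dim\bigl(\{4\text{ lines}\}\times\{q\}\bigr)-\dim\PGL(3,\C)=(8+2)-8=2$, one less than $\dim\mathcal{C}_{2,2,2}=3$; for $(2,2,3)$, where $q$ is determined by the configuration, it is $(6+5)-8=3$, one less than $\dim\mathcal{C}_{2,2,3}=4$. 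In each case the locus is therefore a closed, proper, codimension-one subset, i.e.\ a hypersurface. Finally, since $\mathcal{CM}_n$ is in every case a finite union of components of $\mathcal{C}_n$ together with such hypersurfaces, it is an algebraic variety, which completes the proof.
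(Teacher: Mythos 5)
Your proposal is correct and follows essentially the same route as the paper, whose entire proof consists of citing Proposition~\ref{Prp:estdPezzofibres} for $n\neq 6,7$ and Propositions~\ref{Z267fibresGeometrie}, \ref{prop:Fibre4droites} and \ref{prop:Fibre3droitesUneConique} for $n=6,7$. Your additional verification that the non--del~Pezzo locus in $\mathcal{C}_{2,2,2}$ and $\mathcal{C}_{2,2,3}$ is closed of codimension one (openness of the del~Pezzo condition, plus the dimension count of configurations modulo $\PGL(3,\C)$ against $\dim\mathcal{C}_n=n-3$) correctly fills in a point the paper leaves implicit in the word \emph{hypersurface}.
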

\begin{proof}
La description de $\mathcal{C\! M}_n$ pour $n\not=6,7$ est donn\'ee par la proposition \ref{Prp:estdPezzofibres}; pour $n=6,7$, elle suit des propositions \ref{Z267fibresGeometrie}, \ref{prop:Fibre4droites} et \ref{prop:Fibre3droitesUneConique}.
\qed\end{proof}
\section{$(\Z{2})^2$-fibr\'es en coniques tels que le groupe d'automorphismes de la surface ambiante ne soit pas alg\'ebrique}\label{Sec:Z2pasAlg}

\begin{prop}
Soit $(S,\pi)$ un $(\Z{2})^2$-fibr\'e en coniques, avec un tripl\'e de ramification $\{A_1,A_2,A_3\}$, tel que $A_1$ et $A_2$ comptent $4$ points et $A_3=A_1\cup A_2$ compte $8$ points.

Alors, deux involutions de $\Aut(S/\mathbb{P}^1)$ fixent chacune une courbe elliptique. Les deux courbes elliptiques ne se touchent pas et sont toutes deux \'equivalentes \`a $-K_S$; le morphisme anti-canonique induit une fibration elliptique $\eta:S\rightarrow \mathbb{P}^1$, invariante par $\Aut(S)$. 
 
 Le groupe $\Aut(S)$ n'est pas un groupe alg\'ebrique, alors que le groupe $\Aut(S,\pi)$ -- sous-groupe de $\Aut(S)$ -- est un sous-groupe alg\'ebrique maximal du groupe de Cremona. 
\end{prop}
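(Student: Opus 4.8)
The plan is to read off the geometry from the ramification data $a_1=a_2=2$, $a_3=4$ (so that $A_3=A_1\cup A_2$ has $2a_3=8$ points and the number of singular fibres is $k=a_1+a_2+a_3=8$), then to recognise $S$ as a rational elliptic surface whose elliptic fibration is forced to be $\Aut(S)$-invariant. First I would apply Proposition~\ref{Prp:Z2FibRam}(4): the curve $C_i$ fixed by $\sigma_i$ equals $-K_S+(a_i-2)f$ in $\Pic{S}$, with $(C_i)^2=4a_i-k$. For $i=1,2$ this gives $C_1\equiv C_2\equiv -K_S$ and $(C_i)^2=0$, while the genus of a double cover of $\mathbb{P}^1$ branched over $2a_i$ points (equivalently adjunction, since $2g-2=C_i\cdot(C_i+K_S)=0$) is $g(C_i)=a_i-1=1$; hence $\sigma_1,\sigma_2$ each fix an elliptic curve, both lying in $|-K_S|$. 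Since $C_1\cdot C_2=(-K_S)^2=(K_S)^2=8-k=0$ and $C_1\neq C_2$ are irreducible (their branch loci $A_1\neq A_2$ differ), the two elliptic curves are disjoint. This establishes the first two assertions.

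Next I would produce the fibration. Because $C_1$ and $C_2$ are disjoint members of $|-K_S|$, the pencil they span is base-point free and defines a morphism $\eta:S\to\mathbb{P}^1$ whose general fibre is linearly equivalent to $-K_S$ and, by adjunction, of genus $1$. Here $\rkPic{S}=2+k=10$ and $(K_S)^2=8-k=0$; as $S$ is rational one has $h^2(-K_S)=h^0(2K_S)=0$ and $\chi(-K_S)=1+(K_S)^2=1$, so $|-K_S|$ is exactly this pencil and $S$ is a relatively minimal rational elliptic surface with $\eta$ its anticanonical fibration. Since every element of $\Aut(S)$ fixes $K_S$, it preserves $|-K_S|$ and hence $\eta$; this gives the third assertion.

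For the non-algebraicity of $\Aut(S)$ I would use the criterion recalled before Proposition~\ref{Prp:AutFibMestAlgebrique}: a group between the kernel $K$ and $\Aut(S)$ is algebraic iff its action on $\Pic{S}$ is finite, so it suffices to exhibit an automorphism of infinite order on $\Pic{S}$. The fibration $\eta$ has sections: each of the two components of a singular fibre of $\pi$ is a $(-1)$-curve $E$, and adjunction gives $E\cdot(-K_S)=1$, so $E$ is a section of $\eta$; moreover $\eta$ is relatively minimal, since a $(-1)$-curve inside a fibre would violate $D^2+D\cdot K_S=-2$. Thus $S$ is a rational elliptic surface with section, and its Mordell--Weil group acts on $S$ by fibrewise translations sitting inside $\Aut(S)$, a non-torsion section yielding an automorphism of infinite order on $\Pic{S}$. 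To see the rank is positive, Shioda--Tate gives $\rkPic{S}=2+\mathrm{rank}\,\mathrm{MW}+\sum_v(m_v-1)$, i.e. $\mathrm{rank}\,\mathrm{MW}=8-\sum_v(m_v-1)$; alternatively the $16$ distinct $(-1)$-curves arising as components of the $8$ singular $\pi$-fibres are all sections of $\eta$, whereas a finite Mordell--Weil group has only as many sections as its order, a number bounded well below $16$. Either way $\Aut(S)$ contains an element acting with infinite order on $\Pic{S}$ and is not algebraic.

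Finally, $\Aut(S,\pi)$ is algebraic by Proposition~\ref{Prp:AutFibMestAlgebrique}, and since $k=8\geq 8$ Proposition~\ref{Prp:estdPezzofibres}(1) shows that $S$ is not a del Pezzo surface; hence by Theorem~\ref{Theo:Class} (family $(11)$) the group $\Aut(S,\pi)$ is a maximal algebraic subgroup of $\Bir(\Pn)$. The hard part is the positivity of the Mordell--Weil rank: making precise the bound on the number of sections of a finite-Mordell--Weil rational elliptic surface, or equivalently controlling the reducible fibres of $\eta$ so that $\sum_v(m_v-1)\leq 7$, is the one genuinely nontrivial point, the remaining steps being bookkeeping with the formulas of Proposition~\ref{Prp:Z2FibRam}.
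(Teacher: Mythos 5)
Your proof follows the paper's own argument almost step for step: the identities $C_1\equiv C_2\equiv -K_S$ and $(C_i)^2=0$ from Proposition~\ref{Prp:Z2FibRam}, disjointness from $C_1\cdot C_2=(K_S)^2=0$, the base-point-free anticanonical pencil giving an $\Aut(S)$-invariant elliptic fibration $\eta$ (the paper remarks that $S$ is a Halphen surface of index $1$), maximality of $\Aut(S,\pi)$ from Theorem~\ref{Theo:Class} because $S$ is not del Pezzo, and non-algebraicity of $\Aut(S)$ via translations along $\eta$, using that the components of the singular fibres of $\pi$ are sections of $\eta$. The paper's version of the last step takes one component in each of \emph{two distinct} singular fibres of $\pi$, obtaining two disjoint sections of $\eta$, and asserts without further justification that the translation of one onto the other acts with infinite order on $\Pic{S}$.

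The one point where you diverge is exactly the point you flag as unresolved, and as written it is a genuine gap: your Shioda--Tate route needs $\sum_v(m_v-1)\leq 7$, which you do not prove, and your section-counting route needs the bound on finite Mordell--Weil groups of rational elliptic surfaces (your ``bounded well below 16''), which is the Oguiso--Shioda classification (the largest finite group is $(\Z{3})^2$, of order $9$) and would at least require a citation. To be fair, the paper is no more rigorous here -- it simply asserts the infinite order. But your setup contains a clean, self-contained closure, and it uses the \emph{meeting} pairs of sections rather than the disjoint ones: take as zero section one component $E$ of a singular fibre of $\pi$ and as $P$ the other component of the \emph{same} fibre, so that $P\cdot E=1$. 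Shioda's height formula gives $h(P)=2\chi(\mathcal{O}_S)+2(P\cdot E)-\sum_v \mathrm{contr}_v(P)=4-\sum_v\mathrm{contr}_v(P)$; a type-by-type check shows $\mathrm{contr}_v\leq e_v/4$ for every Kodaira fibre, while $\sum_v e_v=e(S)=12$, so $h(P)\geq 4-3=1>0$. Hence $P$ is non-torsion, the sections $nP$ are pairwise distinct (two distinct sections cannot be linearly equivalent, as their intersection number would have to equal $-1$), and translation by $P$ is an automorphism acting with infinite order on $\Pic{S}$, so $\Aut(S)$ is not algebraic by the criterion preceding Proposition~\ref{Prp:AutFibMestAlgebrique}. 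This also completes your counting argument, since it exhibits a non-torsion section directly. Note that for the paper's disjoint pair the same bound only yields $h\geq 2-3=-1$, i.e.\ nothing: your instinct that this is the one genuinely nontrivial point is correct, and it is the two components of a single fibre of $\pi$, not the disjoint sections, that give the cheapest certificate.
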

\begin{proof}
Notons $\sigma_1,\sigma_2,\sigma_3$ les trois involutions de $\Aut(S/\mathbb{P}^1)$ fixant respectivement les courbes $C_1,C_2,C_3$, ramifi\'ees aux points $A_1,A_2,A_3$. Alors, $C_1$, $C_2$ et $-K_S$ sont \'egaux dans  $\Pic{S}$ et leur carr\'e est \'egal \`a $0$ (proposition~\ref{Prp:Z2FibRam}). Les courbes $C_1$ et $C_2$ \'etant diff\'erentes, le syst\`eme lin\'eaire des courbes \'equivalentes \`a $-K_S$ est de dimension projective au minimum $1$. Comme $(K_S)^2=0$, la dimension est exactement $1$ et le morphisme anti-canonique donne une fibration elliptique $\eta:S\rightarrow \Pn$, qui est invariante par $\Aut(S)$ (de mani\`ere classique cette surface est une surface de Halphen d'indice $1$).

La surface $S$ n'\'etant pas de del Pezzo, le groupe $\Aut(S,\pi)$ est un sous-groupe alg\'ebrique maximal du groupe de Cremona (th\'eor\`eme~\ref{Theo:Class}). Il reste \`a montrer que $\Aut(S)$ n'est pas alg\'ebrique. Pour cela, on choisit deux fibres singuli\`eres de $\pi$ et l'on prend une composante dans chaque fibre, ce qui donne deux sections disjointes de $\eta$. Alors, la translation de l'une des deux sections sur l'autre est un automorphismes de $S$, qui est d'ordre infini et qui agit \'egalement de mani\`ere infinie sur $\Pic{S}$, ce qui termine la preuve.
\qed\end{proof}

            \end{document}